\newtheorem{defin}{Definition}
\newtheorem{lemma}{Lemma}
\newtheorem{prop}{Proposition}
\newtheorem{theo}{Theorem}
\newenvironment{proof}{\medskip\par\noindent{\bf Proof}}{\hfill $\Box$
\medskip\par}
\newcommand{\C}{\mathbb{C}}
\newcommand{\R}{\mathbb{R}}
\newcommand{\bgamma}{\boldsymbol{\gamma}}
\newcommand{\bnu}{\boldsymbol{\nu}}
\newcommand{\bk}{\boldsymbol{k}}
\newcommand{\bd}{\boldsymbol{d}}
\newcommand{\bT}{\boldsymbol{T}}
\newcommand{\bt}{\boldsymbol{t}}
\newcommand{\btau}{\boldsymbol{\tau}}
\begin{document}
\title{On parametric Gevrey asymptotics for some nonlinear initial value problems in two complex time variables}
\author{{\bf A. Lastra\footnote{The author is partially supported by the project MTM2016-77642-C2-1-P of Ministerio de Econom\'ia y Competitividad, Spain}, S. Malek\footnote{The author is partially supported by the project MTM2016-77642-C2-1-P of Ministerio de Econom\'ia y Competitividad, Spain.}}\\
University of Alcal\'{a}, Departamento de F\'{i}sica y Matem\'{a}ticas,\\
Ap. de Correos 20, E-28871 Alcal\'{a} de Henares (Madrid), Spain,\\
University of Lille 1, Laboratoire Paul Painlev\'e,\\
59655 Villeneuve d'Ascq cedex, France,\\
{\tt alberto.lastra@uah.es}\\
{\tt Stephane.Malek@math.univ-lille1.fr }}
\date{}
\maketitle
\thispagestyle{empty}
{ \small \begin{center}
{\bf Abstract}
\end{center}

The asymptotic behavior of a family of singularly perturbed PDEs in two time variables in the complex domain is studied. The appearance of a multilevel Gevrey asymptotics phenomenon in the perturbation parameter is observed. We construct a family of analytic sectorial solutions in $\epsilon$ which share a common asymptotic expansion at the origin, in different Gevrey levels. Such orders are produced by the action of the two independent time variables.

\medskip

\noindent Key words: asymptotic expansion, Borel-Laplace transform, Fourier transform, initial value problem,  formal power series,
nonlinear integro-differential equation, nonlinear partial differential equation, singular perturbation. 2010 MSC: 35C10, 35C20.}
\bigskip \bigskip

\section{Introduction}

This work is devoted to the study of a family of nonlinear initial value Cauchy problems of the form
\begin{multline}\label{probpral} 
Q_1(\partial_{z})Q_2(\partial_z)\partial_{t_1}\partial_{t_2}u(\bt,z,\epsilon) = (P_{1}(\partial_{z},\epsilon)u(\bt,z,\epsilon))(P_{2}(\partial_{z},\epsilon)u(\bt,z,\epsilon))\\
+ \sum_{0\le l_1\le D_1,0\le l_2\le D_2} \epsilon^{\Delta_{l_1,l_2}}t_1^{d_{l_1}}\partial_{t_1}^{\delta_{l_1}}t_2^{\tilde{d}_{l_2}}\partial_{t_2}^{\tilde{\delta}_{l_2}}R_{l_1,l_2}(\partial_{z})u(\bt,z,\epsilon)\\
+ c_{0}(\bt,z,\epsilon)R_{0}(\partial_{z})u(\bt,z,\epsilon) + f(\bt,z,\epsilon),
\end{multline}
with initial null data $u(0,t_2,z,\epsilon)\equiv u(t_1,0,z,\epsilon)\equiv 0$. Here, $D_1,D_2\ge2$ are integer numbers, and for every $0\le l_1\le D_1$ and $0\le l_2\le D_2$ we take non negative integers $d_{l_1},\delta_{l_1},\tilde{d}_{l_2},\tilde{\delta}_{l_2},\Delta_{l_1,l_2}$. The elements $Q_1,Q_2,R_0$ and $R_{l_1,l_2}$ for $0\le l_1\le D_1$ and $0\le l_2\le D_2$ turn out to be polynomials with complex coefficients, and $P_1,P_2$ are polynomials in their first variable, with coefficients being holomorphic and bounded functions in a neighborhood of the origin, say $D(0,\epsilon_0)$, for some $\epsilon_0>0$: $P_1,P_2\in (\mathcal{O}(\overline{D}(0,\epsilon_0))[X]$. 

The coefficient $c_0(t,z,\epsilon)$ and the forcing term $f(t,z,\epsilon)$ are holomorphic and bounded functions on $D(0,r)^2\times H_\beta\times D(0,\epsilon_0)$, where $r>0$, and $H_\beta$ stands for the horizontal strip
$$H_{\beta}:=\{z\in \C:|\hbox{Im}(z)|<\beta\},$$
for some $\beta>0$. 

The precise assumptions on the elements involved in the definition of the problem, and the construction of the coefficients and the forcing term are described in detail in Section 5. We mention two direct extensions that can be made to this study, and which do not offer any additional difficulty. On one hand, the existence of a quadratic nonlinearity can be extended to any higher order derivatives. On the other hand, the monomials in $t_1$ and $t_2$ appearing in the linear part of the right-hand side of the main equation can be substituted by any polynomial $p(t_1,t_2)\in\C[t_1,t_2]$. We have restricted our study to the family of equations in the form (\ref{probpral}) for the sake of brevity, aiming for a more comprehensive reading.\smallskip

The present work is a continuation of that in~\cite{lama}. In that work, the existence of a $k-$summable formal power series in the perturbation parameter $\epsilon$ is established, connecting the analytic solution of the problem with the formal one via Gevrey asymptotics. More precisely, the analytic solution is constructed in terms of a fixed point argument applied on a contractive map defined on appropriate Banach spaces. It admits the formal solution as its Gevrey asymptotic of certain order with respect to the perturbation parameter $\epsilon$, with coefficients belonging to certain Banach space of functions. The Gevrey order emerges from the coefficients and the forcing term appearing in the Cauchy problem. 

The importance of the present work with respect to that previous one is mainly due to the appearance of a multilevel Gevrey asymptotics phenomenon in the perturbation parameter, when dealing with a multivariable approach in time. On the way, different and more assorted situations appear. In addition to this, novel Banach spaces in zero, one and several time variables appearing in the reasoning are necessary in order to describe exponential growth with respect to certain monomial, or both time variables at the same time.

A recent overview on summability and multisummability techniques under different points of view is displayed in~\cite{loday}.\smallskip

In recent years, an increasing interest on complex singularly perturbed PDEs has been observed in the area. Parametric Borel summability has been described in semilinear systems of PDEs of Fuchsian type by H. Yamazawa and M. Yoshino in~\cite{yayo}
$$\eta\sum_{j=1}^n \lambda_j x_j\frac{\partial}{\partial x_j}u(x)=f(x,u),$$
where $x=(x_1,\ldots,x_n)\in\C^{n}$ and $f(x,u)=(f_1(x,u),\ldots,f_N(x,u))$, for $n,N\ge 1$, and $\lambda_j\in\C$. $\nu$ is a small complex perturbation parameter and $f$ stands for a holomorphic vector function in a neighborhood of the origin in $\C^n\times\C^{N}$. Also, in partial differential equations of irregular singular type by M. Yoshino in~\cite{yo}:
$$\eta\sum_{j=1}^n \lambda_j x_j^{s_j}\frac{\partial}{\partial x_j}u(x)=g(x,u,\eta),$$
where $s_j\ge 2$ for $1\le j\le m<n$, and $g(x,u,\eta)$ is a holomorphic vector function in some neighborhood of the origin in $\C^n\times \C^{N}\times \C$.

Recently, S.A. Carrillo and J. Mozo-Fern\'andez have studied properties on monomial summability and the extension of Borel-Laplace methods to this theory in~\cite{camo1,camo2}. In the last section of the second work, a further development on multisummability with respect to several monomials is proposed. Novel Gevrey asymptotic expansions and summability with respect to an analytic germ are described in~\cite{mosch18} and applied to different families of ODEs and PDEs such as
$$\left(x_2\frac{\partial P}{\partial x_2}+\alpha P^{k+1}+PA\right)x_1\frac{\partial f}{\partial x_1}-\left(x_1\frac{\partial P}{\partial x_1}+\beta P^{k+1}+PB\right)x_2\frac{\partial f}{\partial x_2}=h,$$
where $P$ is an homogeneous polynomial, $k\in\mathbb{N}^{\star}$ and $h,A,B$ are convergent power series, and $\alpha,\beta$ satisfy certain conditions.  

The present research joins monomial summability techniques to obtain multisummability of formal solutions of certain family of nonlinear PDEs.

The procedure in our study is as follows. The main problem under consideration, (\ref{probpral}) is specified in terms of an auxiliary problem through the change of variable $(T_1,T_2)=(\epsilon t_1,\epsilon t_2)$ and the properties of inverse Fourier transform (see Proposition~\ref{prop8}). This change of variable has been successfully applied previously by the authors in~\cite{ma1,lamasa1,lama} and rests on the work by M. Canalis-Durand, J. Mozo-Fern\'andez and R. Sch\" afke~\cite{camosch}. This auxiliary problem is given by (\ref{SCP}). We guarantee the existence of a formal power series 
$$\hat{U}(T_1,T_2,m,\epsilon)=\sum_{n_1,n_2\ge1}U_{n_1,n_2}(m,\epsilon)T_1^{n_1}T_2^{n_2},$$ which formally solves (\ref{SCP}) (see Proposition~\ref{prop10}). Its coefficients depend holomorphically on $\epsilon\in D(0,\epsilon_0)$, for some $\epsilon_0>0$, and belong to a Banach space of continuous functions with exponential decay on $\R$. The singular operators involved in the problem allow to expand them into certain irregular operators at $(T_1,T_2)=(0,0)$, following the approach in the work~\cite{taya}. A second auxiliary problem (\ref{k_Borel_equation}) is obtained by means of two consecutive Borel transforms of order $k_1$ with respect to $T_1$ and order $k_2$ with respect to $T_2$ in order to arrive at a convolution-like problem. 

At this point, we make use of a fixed point argument to an appropriate convolution operator (see Proposition~\ref{prop11}) in a Banach space of functions under certain growth and decay properties on their variables (see Definition 1). It is worth mentioning that several Banach spaces are involved in this result, due to the splitting of the sums in the linear operator which concerns functions which only depend on $\tau_1$ or $\tau_2$, or which depend on both or neither of the variables in time. This entails additional technical considerations on the way that different Banach spaces act on convolution operators (see Section 2). Let $\bk=(k_1,k_2)$. The fixed point theorem guarantees the existence of a solution of the second auxiliary problem (\ref{k_Borel_equation}), $\omega_{\bk}(\tau_1,\tau_2,m,\epsilon)$, continuous on $\overline{D}(0,\rho)^2\times \R\times\overline{D}(0,\epsilon_0)$ and holomorphic with respect to $(\tau_1,\tau_2,\epsilon)$ on $D(0,\rho)^2\times D(0,\epsilon)$, which can be extended to functions $\omega_{\bk}^{\mathfrak{d}_{p_1},\tilde{\mathfrak{d}}_{p_2}}(\tau_1,\tau_2,m,\epsilon)$ defined on a set $S_{\mathfrak{d}_{p_1}}\times S_{\mathfrak{d}_{p_2}}\times \R\times \mathcal{E}_{p_1,p_2}$, for every $0\le p_1\le \varsigma_1-1$ and $0\le p_2\le \varsigma_2-1$. Here, $S_{\mathfrak{d}_{p_1}}$ (resp. $S_{\tilde{\mathfrak{d}}_{p_2}}$) stands for an infinite sector with vertex at the origin and bisecting direction $\mathfrak{d}_{p_1}\in\R$ (resp. $\tilde{\mathfrak{d}}_{p_2}\in\R$), and $(\mathcal{E}_{p_1,p_2})_{\begin{subarray}{l} 0 \leq p_1 \leq \varsigma_1 - 1\\0 \leq p_2 \leq \varsigma_2 - 1\end{subarray}}$ is a good covering of $\C^{\star}$ of finite sectors with vertex at the origin (see Definition~\ref{defgood2}). In addition to this, the functions $\omega_{\bk}^{\mathfrak{d}_{p_1},\tilde{\mathfrak{d}}_{p_2}}$ satisfy that
$$\omega_{\bk}^{\mathfrak{d}_{p_1},\tilde{\mathfrak{d}}_{p_2}}(\tau_1,\tau_2,m,\epsilon)|\le \varpi^{\mathfrak{d}_{p_1},\tilde{\mathfrak{d}}_{p_2}}(1+|m|)^{-\mu}
\frac{|\frac{\tau_1}{\epsilon}|}{1 + |\frac{\tau_1}{\epsilon}|^{2k_1}}\frac{|\frac{\tau_2}{\epsilon}|}{1 + |\frac{\tau_2}{\epsilon}|^{2k_2}}\exp( -\beta|m| + \nu_1|\frac{\tau_1}{\epsilon}|^{k_1}-\nu_2|\frac{\tau_2}{\epsilon}|^{k_2} ),$$
for some $\varpi_{\mathfrak{d}_{p_1},\tilde{\mathfrak{d}}_{p_2}}>0$, and all $(\tau_1,\tau_2,m,\epsilon)\in S_{\mathfrak{d}_{p_1}}\times S_{\mathfrak{d}_{p_2}}\times \R\times \mathcal{E}_{p_1,p_2}$. Returning to our main problem, this entails that the function
$$u_{p_1,p_2}(\bt,z,\epsilon) = \frac{k_1k_2}{(2\pi)^{1/2}}\int_{-\infty}^{+\infty}
\int_{L_{\gamma_{p_1}}}\int_{L_{\gamma_{p_2}}}
\omega_{\bk}^{\mathfrak{d}_{p_1},\tilde{\mathfrak{d}}_{p_2}}(u_1,u_2,m,\epsilon) e^{-(\frac{u_1}{\epsilon t_1})^{k_1}-(\frac{u_2}{\epsilon t_2})^{k_2}} e^{izm} \frac{du_2}{u_2}\frac{du_1}{u_1} dm$$
defines a bounded and holomorphic function on $(\mathcal{T}_1\cap D(0,h''))\times (\mathcal{T}_2\cap D(0,h''))\times H_{\beta'}\times \mathcal{E}_{p_1,p_2}$, for some bounded sectors $\mathcal{T}_1,\mathcal{T}_2$ with vertex at 0, some $h''>0$, and where $H_{\beta'}$ is a horizontal strip. The properties of Laplace and inverse Fourier transform guarantee that $u_{p_1,p_2}$ is an actual solution of the main problem under study (\ref{probpral}).

The previous statement shapes the first part of Theorem~\ref{teo1}. The second part of that result proves that the difference of two consecutive solutions $u_{p_1,p_2}$ and $u_{p'_1,p'_2}$ (in the sense that $\mathcal{E}_{p_1,p_2}$ and $\mathcal{E}_{p'_1,p'_2}$ are consecutive sectors in the good covering, with nonempty intersection) can be classified into two categories: those pairs $((p_1,p_2),(p'_1,p'_2))\in\mathcal{U}_{k_1}$ such that 
$$|u_{p_1,p_2}(\bt,z,\epsilon)-u_{p'_1,p'_2}(\bt,z,\epsilon)|\le K_pe^{-\frac{M_p}{|\epsilon|^{k_1}}},\quad \epsilon\in \mathcal{E}_{p_1,p_2}\cap\mathcal{E}_{p'_1,p'_2},$$
uniformly for every $t_j\in\mathcal{T}_j\cap D(0,h'')$, for $j=1,2$ and all $z\in H_{\beta'}$; 

and those pairs $((p_1,p_2),(p'_1,p'_2))\in\mathcal{U}_{k_2}$ such that 
$$|u_{p_1,p_2}(\bt,z,\epsilon)-u_{p'_1,p'_2}(\bt,z,\epsilon)|\le K_pe^{-\frac{M_p}{|\epsilon|^{k_2}}},\quad \epsilon\in \mathcal{E}_{p_1,p_2}\cap\mathcal{E}_{p'_1,p'_2},$$
uniformly for every $t_j\in\mathcal{T}_j\cap D(0,h'')$, for $j=1,2$ and all $z\in H_{\beta'}$.

The second main result, Theorem~\ref{teo2}, makes use of a multilevel version of Ramis-Sibuya Theorem (see Theorem (RS)) and the exponential decay at zero with respect to the perturbation parameter observed in Theorem~\ref{teo1}, in order to state the existence of a formal power series $\hat{u}(t_1,t_2,z,\epsilon)$, written as a formal power series in $\epsilon$, with coefficients in the Banach space $\mathbb{F}$ of holomorphic and bounded functions on $(\mathcal{T}_1\cap D(0,h''))\times (\mathcal{T}_2\cap D(0,h''))\times H_{\beta'}$ with the supremum norm. This formal power series can be split as a sum of two formal power series in $\mathbb{F}[[\epsilon]]$, and each of the holomorphic solutions is decomposed accordingly in such a way that different Gevrey asymptotic behavior can be observed in each term of the sum. This phenomenon is the key point to multisummability, as described in~\cite{ba} and also Section 7.5 in~\cite{loday}.

The structure of the paper is as follows.

\noindent Section 2 analyzes the structure of Banach spaces involved in the construction of the solution of the main problem, and their behavior with respect to different convolution operators. A brief overview on Laplace and Fourier transforms and related properties is described in Section 3. Section 4 is devoted to the construction of two auxiliary problems and the existence of a fixed point of certain operator which is the source for the construction of analytic solutions to the main problem (\ref{probpral}), obtained in Theorem~\ref{teo1} of Section 5. Finally, in Theorem~\ref{teo2} of Section 6, we proof the existence of a formal solution, which is connected to the analytic solutions via a multilevel Gevrey asymptotic representation.

\section{Banach spaces functions with exponential growth and decay}

We denote by $D(0,r)$ an open disc centered at $0$ with radius $r>0$ in $\mathbb{C}$, and by $\bar{D}(0,r)$ its closure. Let $S_{d_j}$ be open unbounded sectors with bisecting directions $d_j \in \mathbb{R}$ for $j=1,2$, and $\mathcal{E}$ be an open sector with finite radius $r_{\mathcal{E}}$, all centered at $0$ in $\mathbb{C}$. For the sake of brevity, we denote $\btau=(\tau_1,\tau_2)$.

The definition of the following norm heavily rests on that considered in~\cite{lama}. Here, the exponential growth is held with respect to the two time variables which are involved.

\begin{defin} Let $\nu_1,\nu_2,\beta,\mu>0$ and $\rho>0$ be positive real numbers. Let $k_1,k_2 \geq 1$ be integer numbers and let $\epsilon \in \mathcal{E}$. We put $\bnu=(\nu_1,\nu_2)$, $\bk=(k_1,k_2)$, $\bd=(d_1,d_2)$, and denote
$F_{(\bnu,\beta,\mu,\bk,\epsilon)}^{\bd}$ the vector space of continuous functions $(\btau,m) \mapsto h(\btau,m)$ on the set
$(\bar{D}(0,\rho) \cup S_{d_1})\times (\bar{D}(0,\rho) \cup S_{d_2}) \times \mathbb{R}$, which are holomorphic with respect to $(\tau_1,\tau_2)$ on $(D(0,\rho) \cup S_{d_1})\times (D(0,\rho) \cup S_{d_2}) $ and such that
\begin{multline}
||h(\btau,m)||_{(\bnu,\beta,\mu,\bk,\epsilon)}\\
=
\sup_{\stackbin{\btau \in (\bar{D}(0,\rho) \cup S_{d_1})\times(\bar{D}(0,\rho) \cup S_{d_2})}{m \in \mathbb{R}}} (1+|m|)^{\mu}
\frac{1 + |\frac{\tau_1}{\epsilon}|^{2k_1}}{|\frac{\tau_1}{\epsilon}|}\frac{1 + |\frac{\tau_2}{\epsilon}|^{2k_2}}{|\frac{\tau_2}{\epsilon}|}\exp( \beta|m| - \nu_1|\frac{\tau_1}{\epsilon}|^{k_1}-\nu_2|\frac{\tau_2}{\epsilon}|^{k_2} ) |h(\tau,m)|
\end{multline}
is finite. One can check that the normed space
$(F_{(\bnu,\beta,\mu,\bk,\epsilon)}^{\bd},||.||_{(\bnu,\beta,\mu,\bk,\epsilon)})$ is a Banach space.
\end{defin}

Throughout the whole section, we assume $\epsilon \in \mathcal{E}$, $\mu,\beta,>0$ are fixed numbers. We also fix $\bnu=(\nu_1,\nu_2)$ for some positive numbers $\nu_1,\nu_2$, and $\bk=(k_1,k_2)$ is a couple of positive integer numbers. Additionally, we take $d_1,d_2\in\R$, and write $\bd$ for $(d_1,d_2)$. The next results are stated without proofs, which are analogous to those in Section 2 of~\cite{lama}. The integrals appearing in these results can be split accordingly, in order to apply the proof therein.

\begin{lemma}\label{lema1} Let $(\btau,m) \mapsto a(\btau,m)$ be a bounded continuous function on
$(\bar{D}(0,\rho) \cup S_{d_1})\times (\bar{D}(0,\rho) \cup S_{d_2}) \times \mathbb{R}$, holomorphic with respect to $\btau$ on $(D(0,\rho) \cup S_{d_1})\times (D(0,\rho) \cup S_{d_2})$. Then,
\begin{equation}
|| a(\btau,m) h(\btau,m) ||_{(\bnu,\beta,\mu,\bk,\epsilon)} \leq
\left( \sup_{\btau \in (\bar{D}(0,\rho) \cup S_{d_1})\times (\bar{D}(0,\rho) \cup S_{d_2}),m \in \mathbb{R}} |a(\btau,m)| \right)
||h(\btau,m)||_{(\bnu,\beta,\mu,\bk,\epsilon)}
\end{equation}
for all $h(\btau,m) \in F_{(\bnu,\beta,\mu,\bk,\epsilon)}^{\bd}$.
\end{lemma}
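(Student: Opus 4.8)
The statement to prove is Lemma~\ref{lema1}: multiplying $h \in F_{(\bnu,\beta,\mu,\bk,\epsilon)}^{\bd}$ by a bounded continuous function $a(\btau,m)$ (holomorphic in $\btau$) keeps the result in the same space, with norm controlled by $\|h\|$ times the sup of $|a|$.

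The plan is straightforward — this is one of the most elementary Banach-space lemmas in these Gevrey-asymptotics papers. The key observation is that the norm $\|\cdot\|_{(\bnu,\beta,\mu,\bk,\epsilon)}$ is defined as a supremum over $(\btau,m)$ of a fixed positive weight function $W(\btau,m) = (1+|m|)^\mu \frac{1+|\tau_1/\epsilon|^{2k_1}}{|\tau_1/\epsilon|}\frac{1+|\tau_2/\epsilon|^{2k_2}}{|\tau_2/\epsilon|}\exp(\beta|m| - \nu_1|\tau_1/\epsilon|^{k_1} - \nu_2|\tau_2/\epsilon|^{k_2})$ times $|h(\btau,m)|$. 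So the whole point is that a weighted sup-norm is sub-multiplicative with respect to multiplication by a bounded function.

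The steps would be: first, verify that $a \cdot h$ still lies in the right function class — it is continuous on $(\bar D(0,\rho)\cup S_{d_1})\times(\bar D(0,\rho)\cup S_{d_2})\times\R$ and holomorphic in $\btau$ on the interior, being a product of two such functions. Second, for each point $(\btau,m)$ in the domain, write $W(\btau,m)|a(\btau,m)h(\btau,m)| = |a(\btau,m)| \cdot W(\btau,m)|h(\btau,m)| \le \big(\sup_{\btau',m'}|a(\btau',m')|\big) \cdot W(\btau,m)|h(\btau,m)|$. Third, take the supremum over $(\btau,m)$ on both sides; the right-hand side becomes $\big(\sup|a|\big)\cdot\|h\|_{(\bnu,\beta,\mu,\bk,\epsilon)}$, while the left-hand side is by definition $\|a h\|_{(\bnu,\beta,\mu,\bk,\epsilon)}$. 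This gives both finiteness (so $ah \in F^{\bd}_{(\bnu,\beta,\mu,\bk,\epsilon)}$) and the stated inequality in one stroke.

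There is essentially no obstacle here; the only thing to be a little careful about is that the supremum defining $\sup|a|$ is finite by the hypothesis that $a$ is bounded, so the product bound is meaningful, and that the weight $W$ is strictly positive and finite away from $\tau_1=0$ or $\tau_2=0$ while the ratios $|\tau_j/\epsilon|^{-1}(1+|\tau_j/\epsilon|^{2k_j})$ and the $h$ being in the space together guarantee the product is well-behaved — but these are exactly the conditions already baked into membership in $F^{\bd}_{(\bnu,\beta,\mu,\bk,\epsilon)}$, so nothing new is needed. As the excerpt itself notes, the proof is the obvious analogue of the corresponding lemma in~\cite{lama}.
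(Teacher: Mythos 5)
Your proof is correct and is the standard argument: factor out $|a(\btau,m)|$ inside the weighted supremum, bound it by its sup, and observe that continuity/holomorphy of the product is inherited. The paper omits the proof entirely (referring to the analogous lemma in \cite{lama}), and your argument is exactly the intended one.
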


\begin{prop}\label{prop1} Let $\gamma_{21},\gamma_{22}>0$ be real numbers. Assume that $k_1,k_2 \geq 1$ are such that $1/k_j \leq \gamma_{2j} \leq 1$, for $j=1,2$. Then, a constant $C_{1}>0$ (depending on $\bnu,\bk,\gamma_{21},\gamma_{22}$) exists with
\begin{multline}
||\int_0^{\tau_1^{k_1}}\int_{0}^{\tau_2^{k_2}} (\tau_1^{k_1}-s_1)^{\gamma_{21}} (\tau_2^{k_2}-s_2)^{\gamma_{22}} f(s_1^{1/k_1},s_2^{1/k_2},m) \frac{ds_2}{s_2}\frac{ds_1}{s_1}
||_{(\bnu,\beta,\mu,\bk,\epsilon)} \\
\leq C_{1}|\epsilon|^{k_1 \gamma_{21}+k_2\gamma_{22}} ||f(\tau,m)||_{(\bnu,\beta,\mu,\bk,\epsilon)}
\end{multline}
for all $f(\btau,m) \in F_{(\bnu,\beta,\mu,\bk,\epsilon)}^{\bd}$.
\end{prop}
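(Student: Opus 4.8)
The plan is to estimate the weighted supremum norm of the double convolution integral directly, reducing it after the change of variables $s_j \mapsto s_j^{1/k_j}$ to a product of two one-dimensional estimates that essentially decouple. First I would perform the substitution suggested by the statement: in the integral
$$
I(\btau,m) = \int_0^{\tau_1^{k_1}}\int_0^{\tau_2^{k_2}} (\tau_1^{k_1}-s_1)^{\gamma_{21}}(\tau_2^{k_2}-s_2)^{\gamma_{22}} f(s_1^{1/k_1},s_2^{1/k_2},m)\,\frac{ds_2}{s_2}\,\frac{ds_1}{s_1},
$$
write $s_j = h_j^{k_j}$, so that $ds_j/s_j = k_j\, dh_j/h_j$, and the domain of integration for $h_j$ becomes a segment from $0$ to $\tau_j$ (along the appropriate ray). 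Then I would insert the definition of the norm: multiply $|I(\btau,m)|$ by $(1+|m|)^\mu \prod_j \frac{1+|\tau_j/\epsilon|^{2k_j}}{|\tau_j/\epsilon|}\exp(\beta|m| - \nu_j|\tau_j/\epsilon|^{k_j})$, and bound $|f(h_1,h_2,m)|$ from above using $\|f\|_{(\bnu,\beta,\mu,\bk,\epsilon)}$, which contributes the reciprocal weights evaluated at $(h_1,h_2,m)$. The factor $(1+|m|)^\mu e^{\beta|m|}$ cancels exactly against the corresponding factor coming from $f$, and the whole estimate factors as a product over $j=1,2$ of integrals in the single variable $h_j$.

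Next, for each $j$ the task is to bound
$$
\frac{1+|\tau_j/\epsilon|^{2k_j}}{|\tau_j/\epsilon|}\, e^{-\nu_j|\tau_j/\epsilon|^{k_j}} \int_0^{|\tau_j|} |\tau_j^{k_j}-h_j^{k_j}|^{\gamma_{2j}} \,\frac{|h_j/\epsilon|}{1+|h_j/\epsilon|^{2k_j}}\, e^{\nu_j|h_j/\epsilon|^{k_j}} \,k_j\,\frac{|dh_j|}{|h_j|}
$$
by $C\,|\epsilon|^{k_j\gamma_{2j}}$. Here I would rescale by $|\epsilon|$, setting $x = |h_j|/|\epsilon|$ and $X = |\tau_j|/|\epsilon|$; the factor $|\tau_j^{k_j}-h_j^{k_j}|^{\gamma_{2j}}$ becomes $|\epsilon|^{k_j\gamma_{2j}}$ times $|X^{k_j}e^{ik_j\theta} - x^{k_j}e^{ik_j\theta}|^{\gamma_{2j}}$ along the ray, which is at most $(X^{k_j}-x^{k_j})^{\gamma_{2j}}$ up to a harmless constant (using $x\le X$ on the segment and the elementary inequality bounding the modulus on a ray; when $\tau_j$ lies in the sector one argues along the straight segment $[0,\tau_j]$). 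This produces the claimed power $|\epsilon|^{k_j\gamma_{2j}}$ explicitly, and reduces the matter to showing that
$$
\sup_{X>0} \frac{1+X^{2k_j}}{X}\, e^{-\nu_j X^{k_j}} \int_0^X (X^{k_j}-x^{k_j})^{\gamma_{2j}}\, \frac{x}{1+x^{2k_j}}\, e^{\nu_j x^{k_j}}\, \frac{dx}{x} < \infty.
$$
The condition $1/k_j \le \gamma_{2j} \le 1$ is what makes this finite: near $x=0$ and $x=X$ it guarantees integrability, and the exponential $e^{\nu_j(x^{k_j}-X^{k_j})}$ together with the polynomial prefactors is handled by the same one-variable lemma already established and used in Section~2 of~\cite{lama} (the integrals here are exactly of that type once split into $x\in[0,X/2]$ and $x\in[X/2,X]$, or by a further substitution $x^{k_j}=X^{k_j}\sigma$). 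Since the paper explicitly says these results are stated without proof because they are "analogous to those in Section 2 of~\cite{lama}" with integrals that "can be split accordingly," I would invoke that one-dimensional estimate rather than redo it.

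The main obstacle is purely bookkeeping: making the decoupling rigorous when $\tau_1,\tau_2$ range over the union of a disc and an unbounded sector, so that the path of integration from $0$ to $\tau_j$ must be chosen (a straight segment) and the inequality $|\tau_j^{k_j}-h_j^{k_j}| \le (\,|\tau_j|^{k_j}-|h_j|^{k_j}\,)$-type bound justified along that path; and tracking that the constant $C_1$ depends only on $\bnu,\bk,\gamma_{21},\gamma_{22}$ and not on $\epsilon$ or $\rho$. None of this is conceptually hard — it is the standard Borel-plane convolution estimate — so the proof is essentially a reduction to the scalar case plus the observation that the $m$-dependence separates multiplicatively and cancels. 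I expect the write-up to be short, citing Proposition analogues from~\cite{lama} for the one-dimensional integral bound and noting the factorization over the two time variables.
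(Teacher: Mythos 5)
Your proposal is correct and follows exactly the route the paper intends: the paper states Proposition~\ref{prop1} without proof, deferring to the one--variable estimates of Section~2 of~\cite{lama}, and your argument (parametrize each $s_j$ along the ray $[0,\tau_j^{k_j}]$ so the $m$-dependence cancels and the bound factors over $j=1,2$, then rescale by $|\epsilon|^{k_j}$ to extract $|\epsilon|^{k_j\gamma_{2j}}$ and reduce to the boundedness of a one-dimensional supremum) is precisely that reduction. The only cosmetic remark is that along the ray one has $|\tau_j^{k_j}-s_j|=|\tau_j|^{k_j}-|s_j|$ exactly, so no ``harmless constant'' is even needed there.
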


Proposition 2 in~\cite{lama} is adapted to the Banach space considered in this work.

\begin{prop}\label{prop2} Let $\gamma_{11},\gamma_{12} \geq 0$ and $\chi_{21},\chi_{22}>-1$ be real numbers. Let $\xi_{21},\xi_{22} \geq 0$ be integer numbers. We write $\bgamma_{1}=(\gamma_{11},\gamma_{12})$. We consider $a_{\bgamma,\bk}\in\mathcal{O}((D(0,\rho)\cup S_{d_1})\times (D(0,\rho)\cup S_{d_2})$, continuous on $(\bar{D}(0,\rho) \cup S_{d_1})\times (\bar{D}(0,\rho) \cup S_{d_2})$, such that
$$ |a_{\bgamma_{1},\bk}(\btau)| \leq \frac{1}{(1+|\tau_1|^{k_1})^{\gamma_{11}}(1+|\tau_2|^{k_2})^{\gamma_{12}}},\quad \btau \in (\bar{D}(0,\rho) \cup S_{d_1})\times (\bar{D}(0,\rho) \cup S_{d_2}).$$\medskip

Assume that for $j=1$ and $j=2$, one of the following holds
\begin{itemize}
\item $\chi_{2j} \geq 0$ and $\xi_{2j}+\chi_{2j}-\gamma_{1j}\le0$, or
\item $\chi_{2j}=\frac{\tilde{\chi}_j}{k_j}-1$, for some $\tilde{\chi}_j\ge 1$ and $\xi_{2j}+\frac{1}{k_j}-\gamma_{1j}\le 0$.
\end{itemize}
Then, there exists a constant $C_{2}>0$ (depending, eventually, on
$\bnu,\xi_{21},\xi_{22},\chi_{21},\chi_{22},\bgamma_{1},\tilde{\chi}_1,\tilde{\chi}_2,\bk$) such that
\begin{multline}
|| a_{\bgamma_{1},\bk}(\btau) \int_{0}^{\tau_1^{k_1}}\int_{0}^{\tau_2^{k_2}} (\tau_1^{k_1}-s_1)^{\chi_{21}}(\tau_2^{k_2}-s_2)^{\chi_{22}}s_1^{\xi_{21}}s_2^{\xi_{22}}f(s_1^{1/k_1},s_2^{1/k_2},m) ds_2ds_1 ||_{(\bnu,\beta,\mu,\bk,\epsilon)} \\
\leq C_{2}|\epsilon|^{k_1(1+\xi_{21}+\chi_{21}-\gamma_{11})+k_2(1+\xi_{22}+\chi_{22}-\gamma_{12})}
||f(\tau,m)||_{(\bnu,\beta,\mu,\bk,\epsilon)}
\end{multline}
for all $f(\btau,m) \in F_{(\bnu,\beta,\mu,\bk,\epsilon)}^{\bd}$.
\end{prop}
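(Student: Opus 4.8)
The plan is to reduce the two-dimensional estimate to a product of one-dimensional estimates, exploiting the tensor-product structure of both the norm $\|\cdot\|_{(\bnu,\beta,\mu,\bk,\epsilon)}$ and of the iterated integral operator. First I would fix $(\tau_1,\tau_2,m)$ and observe that the integrand factorizes: the kernel $(\tau_1^{k_1}-s_1)^{\chi_{21}}(\tau_2^{k_2}-s_2)^{\chi_{22}}s_1^{\xi_{21}}s_2^{\xi_{22}}$ is a product of a function of $(s_1,\tau_1)$ and a function of $(s_2,\tau_2)$, the prefactor $a_{\bgamma_1,\bk}(\btau)$ is dominated by a product of the one-variable majorants $(1+|\tau_j|^{k_j})^{-\gamma_{1j}}$, and the weight in the norm splits as $\frac{1+|\tau_1/\epsilon|^{2k_1}}{|\tau_1/\epsilon|}\cdot\frac{1+|\tau_2/\epsilon|^{2k_2}}{|\tau_2/\epsilon|}\cdot(1+|m|)^\mu e^{\beta|m|}$ times $e^{-\nu_1|\tau_1/\epsilon|^{k_1}}e^{-\nu_2|\tau_2/\epsilon|^{k_2}}$. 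The only genuinely coupled object is $f(s_1^{1/k_1},s_2^{1/k_2},m)$ itself, so I would insert and remove the factor $\prod_j \frac{1+|s_j^{1/k_j}/\epsilon|^{2k_j}}{|s_j^{1/k_j}/\epsilon|}e^{-\nu_j|s_j^{1/k_j}/\epsilon|^{k_j}}$ to replace $|f(s_1^{1/k_1},s_2^{1/k_2},m)|$ by $\|f\|_{(\bnu,\beta,\mu,\bk,\epsilon)}(1+|m|)^{-\mu}e^{-\beta|m|}$ times the reciprocal weights, which then re-factorizes cleanly.

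Having done this, the supremum over $(\btau,m)$ of the resulting expression bounds $\|\cdot\|_{(\bnu,\beta,\mu,\bk,\epsilon)}$ of the operator applied to $f$ by $\|f\|_{(\bnu,\beta,\mu,\bk,\epsilon)}$ times a product $A_1(\tau_1,\epsilon)\cdot A_2(\tau_2,\epsilon)$, where each $A_j(\tau_j,\epsilon)$ is exactly the one-variable quantity estimated in Proposition 2 of~\cite{lama}. Concretely, after the change of variable $s_j = \epsilon^{k_j} h_j$ (or working directly with $x_j=|\tau_j/\epsilon|$), one has
\begin{multline*}
A_j(\tau_j,\epsilon) \leq (1+|\tau_j/\epsilon|^{k_j})^{-\gamma_{1j}}\frac{1+|\tau_j/\epsilon|^{2k_j}}{|\tau_j/\epsilon|}e^{-\nu_j|\tau_j/\epsilon|^{k_j}}\\
\times |\epsilon|^{k_j(1+\xi_{2j}+\chi_{2j})}\int_0^{|\tau_j/\epsilon|^{k_j}}(|\tau_j/\epsilon|^{k_j}-h_j)^{\chi_{2j}}h_j^{\xi_{2j}}\frac{h_j^{1/k_j}}{1+h_j^2}e^{\nu_j h_j}\,dh_j,
\end{multline*}
and the claimed bound $A_j \leq C|\epsilon|^{k_j(1+\xi_{2j}+\chi_{2j}-\gamma_{1j})}$ is precisely the content of the one-dimensional result under either of the two hypotheses listed (the case $\chi_{2j}\geq 0$ with $\xi_{2j}+\chi_{2j}-\gamma_{1j}\leq 0$, or the case $\chi_{2j}=\tilde\chi_j/k_j-1$ with $\xi_{2j}+1/k_j-\gamma_{1j}\leq 0$). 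I would invoke this for $j=1$ and $j=2$ separately and multiply, so $C_2 = C^{(1)}C^{(2)}$ with $C^{(j)}$ the constant from the $j$-th one-variable estimate; the exponents in $|\epsilon|$ add, giving exactly $k_1(1+\xi_{21}+\chi_{21}-\gamma_{11})+k_2(1+\xi_{22}+\chi_{22}-\gamma_{12})$.

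The only point requiring a little care is the holomorphy/continuity of the operator's output on $(\bar D(0,\rho)\cup S_{d_1})\times(\bar D(0,\rho)\cup S_{d_2})$ and the fact that the iterated integral over the segments $[0,\tau_1^{k_1}]\times[0,\tau_2^{k_2}]$ is well-defined and can be deformed/split as needed; this is handled exactly as in~\cite{lama}, using that $\chi_{2j}>-1$ guarantees integrability of the kernel at the endpoint $s_j=\tau_j^{k_j}$ and that $\xi_{2j}\geq 0$ together with $f$ vanishing appropriately near $s_j=0$ controls the lower endpoint. The main obstacle, such as it is, is purely bookkeeping: one must verify that the coupled factor $f(s_1^{1/k_1},s_2^{1/k_2},m)$ does not obstruct the factorization, i.e.\ that bounding it by the norm times the product of reciprocal one-variable weights is legitimate for all $(s_1,s_2)$ in the product of segments — this works because the $F$-norm weight is itself a clean tensor product in $(\tau_1,\tau_2)$ with no cross terms, so the reduction to the single-variable Proposition 2 of~\cite{lama} goes through verbatim. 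No new analytic input beyond the cited one-dimensional estimate is needed.
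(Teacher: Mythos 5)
Your proposal is correct and matches the paper's intended argument: the paper gives no explicit proof here, stating only that the result follows from Proposition 2 of~\cite{lama} by "splitting the integrals accordingly," which is precisely your tensor-product factorization of the kernel, the majorant of $a_{\bgamma_1,\bk}$, and the norm weight, reducing everything to two independent one-variable estimates whose constants multiply and whose $|\epsilon|$-exponents add. The only cosmetic slip is in your displayed bound for $A_j$, where the prefactor should start as $(1+|\tau_j|^{k_j})^{-\gamma_{1j}}$ and be converted to $C|\epsilon|^{-k_j\gamma_{1j}}(1+|\tau_j/\epsilon|^{k_j})^{-\gamma_{1j}}$ to produce the $-\gamma_{1j}$ in the final exponent, but your stated conclusion already accounts for this correctly.
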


The previous result can also be particularized to each of the variables in time in the following manner. We write the result which corresponds to the first time variable, but one can reproduce the same arguments symmetrically on the second variable in time, $\tau_2$.

\begin{prop}\label{prop2lama} Let $\gamma_{1} \geq 0$ and $\chi_{2}>-1$ be real numbers, and $\xi_2\ge0$ be an integer number. We consider $a_{\gamma_1,k_1}\in\mathcal{O}(D(0,\rho)\cup S_{d_1})$, continuous on $\bar{D}(0,\rho) \cup S_{d_1}$, such that
$$ |a_{\gamma_{1},k_1}(\tau_1)| \leq \frac{1}{(1+|\tau_1|^{k_1})^{\gamma_{1}}},\quad \tau_1 \in \bar{D}(0,\rho) \cup S_{d_1}.$$\medskip
Assume that 
\begin{itemize}
\item $\chi_{2} \geq 0$ and $\xi_{2}+\chi_{2}-\gamma_{1}\le0$, or
\item $\chi_{2}=\frac{\tilde{\chi}}{k_1}-1$, for some $\tilde{\chi}\ge 1$ and $\xi_{2}+\frac{1}{k_1}-\gamma_{1}\le 0$.
\end{itemize}
Then, there exists a constant $C_{2}>0$ (depending, eventually, on
$\bnu,\xi_{2},\chi_{2},\gamma_{1},\tilde{\chi},k_1$) such that
\begin{multline}
|| a_{\gamma_{1},k_1}(\tau_1) \int_{0}^{\tau_1^{k_1}} (\tau_1^{k_1}-s_1)^{\chi_{2}}s_1^{\xi_{2}}f(s_1^{1/k_1},\tau_2,m) ds_1 ||_{(\bnu,\beta,\mu,\bk,\epsilon)} \\
\leq C_{2}|\epsilon|^{k_1(1+\xi_{2}+\chi_{2}-\gamma_{1})}
||f(\btau,m)||_{(\bnu,\beta,\mu,\bk,\epsilon)}
\end{multline}
for all $f(\btau,m) \in F_{(\bnu,\beta,\mu,\bk,\epsilon)}^{\bd}$.
\end{prop}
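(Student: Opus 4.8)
The statement is the single-variable specialization of Proposition~\ref{prop2}, so the natural plan is simply to reduce it to that result (or to the one-variable estimate already established in Section~2 of~\cite{lama}). First I would observe that the operator
$$f\mapsto a_{\gamma_1,k_1}(\tau_1)\int_0^{\tau_1^{k_1}}(\tau_1^{k_1}-s_1)^{\chi_2}s_1^{\xi_2}f(s_1^{1/k_1},\tau_2,m)\,ds_1$$
acts nontrivially only on the $\tau_1$-variable, leaving $\tau_2$ and $m$ as passive parameters. The key to the reduction is to factor the norm $\|\cdot\|_{(\bnu,\beta,\mu,\bk,\epsilon)}$: taking the supremum in two stages, one can write
$$\|h\|_{(\bnu,\beta,\mu,\bk,\epsilon)}=\sup_{\tau_2,m}\Bigl[(1+|m|)^\mu\frac{1+|\tfrac{\tau_2}{\epsilon}|^{2k_2}}{|\tfrac{\tau_2}{\epsilon}|}e^{\beta|m|-\nu_2|\tfrac{\tau_2}{\epsilon}|^{k_2}}\,\|h(\cdot,\tau_2,m)\|_{(\nu_1,k_1,\epsilon)}^{(1)}\Bigr],$$
where $\|\cdot\|^{(1)}_{(\nu_1,k_1,\epsilon)}$ is the one-dimensional weighted sup-norm in $\tau_1$ (the analogue of the Banach space in~\cite{lama} with weight $\tfrac{1+|\tau_1/\epsilon|^{2k_1}}{|\tau_1/\epsilon|}e^{-\nu_1|\tau_1/\epsilon|^{k_1}}$). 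Since the operator commutes with the $(\tau_2,m)$-weights and does not move those variables, the desired inequality follows once the corresponding one-variable bound is in place.

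Second, for the one-variable bound I would invoke (or re-derive) Proposition~2 of~\cite{lama}: under the stated dichotomy on $(\chi_2,\xi_2,\gamma_1)$ — namely $\chi_2\ge0$ with $\xi_2+\chi_2-\gamma_1\le0$, or $\chi_2=\tilde\chi/k_1-1$ with $\tilde\chi\ge1$ and $\xi_2+1/k_1-\gamma_1\le0$ — one has
$$\Bigl\|a_{\gamma_1,k_1}(\tau_1)\int_0^{\tau_1^{k_1}}(\tau_1^{k_1}-s_1)^{\chi_2}s_1^{\xi_2}g(s_1^{1/k_1})\,ds_1\Bigr\|^{(1)}_{(\nu_1,k_1,\epsilon)}\le C_2|\epsilon|^{k_1(1+\xi_2+\chi_2-\gamma_1)}\|g\|^{(1)}_{(\nu_1,k_1,\epsilon)}.$$
The proof of this is the standard one: insert the weight, substitute $s_1=\tau_1^{k_1}h$ (or estimate $|s_1|\le|\tau_1^{k_1}|$ directly), bound $|a_{\gamma_1,k_1}|$ by $(1+|\tau_1|^{k_1})^{-\gamma_1}$, extract the factor $|\epsilon|^{k_1(1+\xi_2+\chi_2-\gamma_1)}$ from the change of variables, and reduce everything to the boundedness of an auxiliary function of $x=|\tau_1/\epsilon|^{k_1}$ of the form $x^{-1}(1+x^2)\int_0^x(x-u)^{\chi_2}u^{\xi_2}\tfrac{u^{1/k_1-1}}{(1+u)^{2}}e^{-\nu_1(x^{1/k_1}-u^{1/k_1})}\,du$ (up to constants); the two cases in the hypothesis are exactly what is needed to control the singularity at $u=0$ and the polynomial-versus-exponential competition as $x\to+\infty$. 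Applying this with $g(\cdot)=f(\cdot,\tau_2,m)$ pointwise in $(\tau_2,m)$ and then re-attaching the $(\tau_2,m)$-weights finishes the argument.

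\textbf{Main obstacle.} The only genuine subtlety, as the paragraph preceding the proposition already hints, is the splitting of the integral when $\tau_1$ ranges over $\bar D(0,\rho)\cup S_{d_1}$ rather than over a single sector: the contour $[0,\tau_1^{k_1}]$ must be handled separately on the disc part and on the sectorial part, and one must check that the holomorphy and continuity of the resulting function on $\bar D(0,\rho)\cup S_{d_1}$ is preserved and that the weighted estimate is uniform across the junction. This is exactly the technical point that is dispatched in Section~2 of~\cite{lama}, so in the write-up I would state that the integrals split accordingly and the estimates of~\cite{lama} apply verbatim to each piece; no new phenomenon arises because the second time variable plays no active role. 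I would therefore present this as a short proof citing Proposition~2 of~\cite{lama} together with the factorization of the norm, rather than repeating the computation.
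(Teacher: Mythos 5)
Your proposal is correct and is essentially the paper's own approach: the paper gives no written proof for this proposition, stating only that the argument is the one of Proposition 2 in~\cite{lama} applied in the $\tau_1$-variable with $(\tau_2,m)$ carried along passively, which is exactly your factorization of the norm plus the one-variable weighted estimate. (Only a cosmetic slip in your sketched auxiliary function: with $x=|\tau_1/\epsilon|^{k_1}$ and $u=|s_1|/|\epsilon|^{k_1}$ the exponential factor is $e^{-\nu_1(x-u)}$ and the prefactor is $(1+x^2)/x^{1/k_1}$, but this does not affect the argument.)
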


\begin{prop}\label{prop137} Let $Q_{1}(X),Q_{2}(X),R(X) \in \mathbb{C}[X]$ such that
\begin{equation}
\mathrm{deg}(R) \geq \mathrm{deg}(Q_{1}) \ \ , \ \ \mathrm{deg}(R) \geq \mathrm{deg}(Q_{2}) \ \ , \ \ R(im) \neq 0,\qquad m\in\R.
\label{R>Q1_R>Q2_R_nonzero}
\end{equation}
Assume that $\mu > \max( \mathrm{deg}(Q_{1})+1, \mathrm{deg}(Q_{2})+1 )$. Let
$m \mapsto b(m)$ be a continuous function on $\mathbb{R}$ such that
$$
|b(m)| \leq \frac{1}{|R(im)|},\quad m \in \mathbb{R}.
$$
Then, there exists a constant $C_{3}>0$ (depending on $Q_{1},Q_{2},R,\mu,\bk,\bnu$) such that
\begin{multline}
|| b(m) \int_{0}^{\tau_1^{k_1}}\int_{0}^{\tau_2^{k_2}} (\tau_1^{k_1}-s_1)^{\frac{1}{k_1}}(\tau_2^{k_2}-s_2)^{\frac{1}{k_2}}\\
\times \left( \int_{0}^{s_1}\int_{0}^{s_2} \int_{-\infty}^{+\infty} Q_{1}(i(m-m_{1}))
f((s_1-x_1)^{1/{k_1}},(s_2-x_2)^{1/{k_2}},m-m_{1})\right. \\
\left.\times Q_{2}(im_{1}) g(x_1^{1/{k_1}},x_2^{1/{k_2}},m_{1}) \frac{1}{(s_1-x_1)x_1}\frac{1}{(s_2-x_2)x_2} dm_{1} dx_2 dx_1  \right) ds_2 ds_1 ||_{(\bnu,\beta,\mu,\bk,\epsilon)} \\
\leq C_{3}|\epsilon|^2 ||f(\btau,m)||_{(\bnu,\beta,\mu,\bk,\epsilon)} ||g(\btau,m)||_{(\bnu,\beta,\mu,\bk,\epsilon)}
\end{multline}
for all $f(\btau,m), g(\btau,m) \in F_{(\bnu,\beta,\mu,\bk,\epsilon)}^{\bd}$.
\end{prop}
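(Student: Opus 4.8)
The plan is to estimate the triple-integral operator by bounding each factor separately inside the weighted supremum norm, reducing the bilinear convolution estimate in $\tau_1,\tau_2$ and $m$ to a product of one-dimensional estimates. First I would deal with the Fourier convolution in the variable $m$: using the hypotheses $\mathrm{deg}(R)\geq\mathrm{deg}(Q_j)$ and $R(im)\neq 0$, together with $\mu>\max(\mathrm{deg}(Q_1)+1,\mathrm{deg}(Q_2)+1)$, one controls $|b(m)Q_1(i(m-m_1))Q_2(im_1)|$ by $C(1+|m-m_1|)^{\mathrm{deg}(Q_1)}(1+|m_1|)^{\mathrm{deg}(Q_2)}$ and then absorbs these polynomial factors into the decay weights $(1+|m-m_1|)^{-\mu}$, $(1+|m_1|)^{-\mu}$ coming from the norms of $f$ and $g$. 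The standard Fourier-type lemma (the analogue of the one in Section 2 of~\cite{lama}, here with $m-m_1$ and $m_1$ as the split variables) gives a finite constant $C_3'$ such that $\int_{-\infty}^{+\infty}(1+|m-m_1|)^{\mathrm{deg}(Q_1)-\mu}(1+|m_1|)^{\mathrm{deg}(Q_2)-\mu}\,dm_1\leq C_3'(1+|m|)^{-\mu}$; this recovers the required $m$-decay and the exponential factor $e^{-\beta|m|}$ comes out by subadditivity $|m|\leq |m-m_1|+|m_1|$.

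Next I would handle the time variables. After extracting the $m$-part, what remains is, up to constants and the recovered weight $(1+|m|)^{-\mu}e^{-\beta|m|}$, an iterated integral of the schematic form
\begin{equation*}
\int_{0}^{\tau_1^{k_1}}\!\!\int_{0}^{\tau_2^{k_2}}(\tau_1^{k_1}-s_1)^{1/k_1}(\tau_2^{k_2}-s_2)^{1/k_2}\!\!\int_0^{s_1}\!\!\int_0^{s_2}\! F(s_1-x_1,s_2-x_2)G(x_1,x_2)\,\frac{dx_2\,dx_1}{(s_1-x_1)x_1(s_2-x_2)x_2}\,ds_2\,ds_1,
\end{equation*}
where $F,G$ carry the growth weights $\frac{|\sigma/\epsilon|^{1/k}}{1+|\sigma/\epsilon|^{2k}}e^{\nu|\sigma/\epsilon|^{k}}$. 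Because everything now factors as a product of a $\tau_1$-piece and a $\tau_2$-piece, I would treat one variable at a time. For the inner convolution $\int_0^{s_j}(\cdots)\frac{dx_j}{(s_j-x_j)x_j}$ one uses the elementary inequality $|\sigma_1/\epsilon|^{k}+|\sigma_2/\epsilon|^{k}\leq |(\sigma_1+\sigma_2)/\epsilon|^{k}$ valid for $k\geq 1$ to combine the two exponentials into $e^{\nu|s_j/\epsilon|^{k}}$, and then one estimates the resulting $\int_0^{|s_j|}\frac{h_1(x_j)h_2(|s_j|-x_j)}{x_j(|s_j|-x_j)}dx_j$ type integral by a function of the same shape $\frac{|s_j/\epsilon|^{1/k}}{1+|s_j/\epsilon|^{2k}}$ times $|\epsilon|$, exactly as in the one-variable computation underlying Proposition~\ref{prop137}'s precursor in~\cite{lama}; this is where the factor $|\epsilon|^2$ (one $|\epsilon|$ per time variable) ultimately originates, together with the outer $(\tau_j^{k_j}-s_j)^{1/k_j}$ integration which is absorbed without cost thanks to $\chi_2=1/k_j$ falling in the second alternative of Proposition~\ref{prop2lama} with $\gamma_1$ large enough (here $\gamma_1=0$ and the $2k_j$-decay in the target weight plays the role of $a_{\gamma_1,k_1}$). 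Applying Proposition~\ref{prop2lama} (or directly Proposition~\ref{prop2}) in each variable then yields the claimed bound $C_3|\epsilon|^2\|f\|\,\|g\|$.

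I expect the main obstacle to be bookkeeping rather than a conceptual difficulty: one must carefully track that after the Fourier convolution in $m$ the remaining kernel in $(\tau_1,\tau_2)$ genuinely decouples into a product, and that the inner double integral $\int_0^{s_1}\int_0^{s_2}\cdots\frac{1}{(s_1-x_1)x_1(s_2-x_2)x_2}$ can be estimated as a tensor product of two one-dimensional convolutions — this requires the weights defining $F_{(\bnu,\beta,\mu,\bk,\epsilon)}^{\bd}$ to be multiplicative in $(\tau_1,\tau_2)$, which they are by construction. A secondary point requiring care is the interplay between the subadditivity inequality for the $|\cdot/\epsilon|^{k_j}$ exponents and the precise exponents $\gamma_{11},\gamma_{12},\chi_{2j},\xi_{2j}$ needed to invoke Propositions~\ref{prop2} and~\ref{prop2lama}: one has to verify that the relevant inequalities ($\chi_{2j}=\tilde\chi_j/k_j-1$ with $\tilde\chi_j=1$, and $\xi_{2j}+1/k_j-\gamma_{1j}\leq 0$ with the appropriate $\gamma_{1j}$ read off from the polynomial decay $1/(1+|\tau_j|^{k_j})^{2}$ available in the target norm) are all satisfied, so that no extra positive power of $|\epsilon|$ is lost and no negative power appears. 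Since all of these ingredients are the two-variable analogues of estimates already carried out in Section 2 of~\cite{lama}, the proof reduces, as the authors indicate, to splitting the integrals and applying those arguments variable by variable.
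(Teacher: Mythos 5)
Your plan follows the route the paper itself prescribes for Proposition~\ref{prop137}: the result is stated without proof, the authors deferring to the one-variable estimates of Section~2 of~\cite{lama} applied variable by variable after splitting the integrals, and your architecture --- Fourier convolution in $m$ controlled by the degree hypotheses, tensorization of the time kernel, and one one-dimensional convolution estimate per time variable each producing a factor $|\epsilon|$ --- is exactly that.

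Two of your intermediate justifications are nevertheless incorrect as written and would need repair in a full write-up. First, the inequality $\int_{-\infty}^{+\infty}(1+|m-m_{1}|)^{\mathrm{deg}(Q_{1})-\mu}(1+|m_{1}|)^{\mathrm{deg}(Q_{2})-\mu}\,dm_{1}\leq C_{3}'(1+|m|)^{-\mu}$ is false as soon as one of the $Q_{j}$ is nonconstant: the contribution of the region $|m_{1}|\leq 1$ alone is of order $(1+|m|)^{\mathrm{deg}(Q_{1})-\mu}$, so the integral only decays like $(1+|m|)^{\max(\mathrm{deg}(Q_{1}),\mathrm{deg}(Q_{2}))-\mu}$; the missing decay must be supplied by $|b(m)|\leq 1/|R(im)|\leq C(1+|m|)^{-\mathrm{deg}(R)}$ together with $\mathrm{deg}(R)\geq\mathrm{deg}(Q_{j})$, whereas in your formulation $b(m)$ plays no quantitative role. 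Second, Proposition~\ref{prop2lama} cannot be invoked with $\gamma_{1}=0$ and $\chi_{2}=1/k_{j}$: both alternatives of its hypotheses force $\gamma_{1}\geq \xi_{2}+1/k_{j}>0$, so the outer $(\tau_{j}^{k_{j}}-s_{j})^{1/k_{j}}$ integration is not absorbed that way. What actually controls it is the exponential deficit between the weight evaluated at $s_{j}$ and at $\tau_{j}^{k_{j}}$, i.e.\ the boundedness of $A(x)=\frac{1+x^{2}}{x^{1/k_{j}}}e^{-\nu_{j}x}\int_{0}^{x}(x-h)^{1/k_{j}}\frac{h^{1/k_{j}}}{1+h^{2}}e^{\nu_{j}h}\frac{dh}{h}$, exactly the quantity bounded at the end of the proof of Proposition~\ref{prop5}; performing that computation once in each variable, after the rescaling $s_{j}=h|\epsilon|^{k_{j}}$, is also what produces the two factors of $|\epsilon|$. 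Neither point is a conceptual obstruction --- both are the standard computations of~\cite{lama} that the authors are implicitly invoking --- but as stated your two key steps do not close.
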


\begin{defin} Let $\epsilon \in \mathcal{E}$. We denote
$F_{(\nu_1,\beta,\mu,k_1,\epsilon)}^{d_1}$ the vector space of continuous functions $(\tau_1,m) \mapsto h(\tau_1,m)$ defined on the set
$(\bar{D}(0,\rho) \cup S_{d_1}) \times \mathbb{R}$, which are holomorphic with respect to $\tau_1$ on $D(0,\rho) \cup S_{d_1}$ and such that
\begin{multline}
||h(\tau_1,m)||_{(\nu_1,\beta,\mu,k_1,\epsilon)}\\
=
\sup_{\tau_1 \in (\bar{D}(0,\rho) \cup S_{d_1}),m \in \mathbb{R}} (1+|m|)^{\mu}
\frac{1 + |\frac{\tau_1}{\epsilon}|^{2k_1}}{|\frac{\tau_1}{\epsilon}|}\exp( \beta|m| - \nu_1|\frac{\tau_1}{\epsilon}|^{k_1}) |h(\tau_1,m)|
\end{multline}
is finite. The normed space
$(F_{(\nu_1,\beta,\mu,k_1,\epsilon)}^{d_1},||.||_{(\nu_1,\beta,\mu,k_1,\epsilon)})$ is a Banach space.
\end{defin}



The enunciate of Proposition~\ref{prop137} can be adapted in the following form to the Banach spaces involved, as follows.

\begin{prop}\label{prop5} Let $Q_{1}(X),Q_{2}(X),R(X) \in \mathbb{C}[X]$ such that
\begin{equation}
\mathrm{deg}(R) \geq \mathrm{deg}(Q_{1}) \ \ , \ \ \mathrm{deg}(R) \geq \mathrm{deg}(Q_{2}) \ \ , \ \ R(im) \neq 0,\qquad m\in\R.
\label{R>Q1_R>Q2_R_nonzero2}
\end{equation}
Assume that $\mu > \max( \mathrm{deg}(Q_{1})+1, \mathrm{deg}(Q_{2})+1 )$. Let
$m \mapsto b(m)$ be a continuous function on $\mathbb{R}$ such that
$$
|b(m)| \leq \frac{1}{|R(im)|},\quad m \in \mathbb{R}.
$$
Then, there exists a constant $C_{3.2}>0$ (depending on $Q_{1},Q_{2},R,\mu,\bk,\bnu$) such that
\begin{multline}
|| b(m) \int_{0}^{\tau_1^{k_1}}\int_{0}^{\tau_2^{k_2}} (\tau_1^{k_1}-s_1)^{\frac{1}{k_1}}(\tau_2^{k_2}-s_2)^{\frac{1}{k_2}}\\
\times \left( \int_{0}^{s_1}\int_{-\infty}^{+\infty} Q_{1}(i(m-m_{1}))
f((s_1-x_1)^{1/{k_1}},m-m_{1})\right. \\
\left.\times Q_{2}(im_{1}) g(x_1^{1/{k_1}},s_2^{1/{k_2}},m_{1}) \frac{1}{(s_1-x_1)x_1}  dm_{1} dx_1  \right)\frac{ds_2}{s_2} ds_1 ||_{(\bnu,\beta,\mu,\bk,\epsilon)} \\
\leq C_{3.2}|\epsilon|^2 ||f(\tau_1,m)||_{(\nu_1,\beta,\mu,k_1,\epsilon)} ||g(\btau,m)||_{(\bnu,\beta,\mu,\bk,\epsilon)}
\end{multline}
for all $f(\tau_1,m)\in F^{d_1}_{(\nu_1,\beta,\mu,k_1,\epsilon)}  $ and $g(\btau,m) \in F_{(\bnu,\beta,\mu,\bk,\epsilon)}^{\bd}$.
\end{prop}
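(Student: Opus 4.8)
The plan is to mimic the proof of Proposition~\ref{prop137}, carefully tracking where the mixed Banach norms intervene. First I would introduce the short-hand notation
\begin{multline*}
\mathcal{A}(\btau,m)=b(m)\int_{0}^{\tau_1^{k_1}}\int_{0}^{\tau_2^{k_2}} (\tau_1^{k_1}-s_1)^{\frac{1}{k_1}}(\tau_2^{k_2}-s_2)^{\frac{1}{k_2}}
\Big( \int_{0}^{s_1}\int_{-\infty}^{+\infty} Q_{1}(i(m-m_{1}))\\
\times f((s_1-x_1)^{1/{k_1}},m-m_{1})
Q_{2}(im_{1}) g(x_1^{1/{k_1}},s_2^{1/{k_2}},m_{1}) \frac{1}{(s_1-x_1)x_1}  dm_{1} dx_1  \Big)\frac{ds_2}{s_2} ds_1,
\end{multline*}
and then bound $|f|$ by its $(\nu_1,\beta,\mu,k_1,\epsilon)$-norm times the weight
$(1+|m-m_1|)^{-\mu}\frac{|(s_1-x_1)/\epsilon^{k_1}|^{1/k_1}}{1+|(s_1-x_1)/\epsilon^{k_1}|^{2}}\exp(-\beta|m-m_1|+\nu_1|(s_1-x_1)/\epsilon^{k_1}|)$, and $|g|$ similarly by its $(\bnu,\beta,\mu,\bk,\epsilon)$-norm times the analogous two-variable weight in $x_1$ and $s_2$. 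The reduction of the $m$-integral, using $\deg(R)\ge\deg(Q_1),\deg(Q_2)$, $R(im)\neq0$, $\mu>\max(\deg Q_1+1,\deg Q_2+1)$ and the convolution inequality
$$\int_{-\infty}^{+\infty}\frac{1}{(1+|m-m_1|)^{\mu}}\frac{1}{(1+|m_1|)^{\mu}}dm_1\le C(\mu)\frac{1}{(1+|m|)^{\mu}}$$
is exactly as in~\cite{lama} and can be quoted verbatim; this disposes of the Fourier variables and leaves a purely real-variable estimate.

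Next I would treat the remaining integral over $s_1,s_2,x_1$. The key observation is that the $s_2$-integral is completely decoupled: it is of the form $\int_0^{\tau_2^{k_2}}(\tau_2^{k_2}-s_2)^{1/k_2}\,(\text{weight in }s_2)\,\frac{ds_2}{s_2}$, which is handled by (the one-variable analogue of) Proposition~\ref{prop1} or Proposition~\ref{prop2lama} applied in the variable $\tau_2$ alone, producing a factor $|\epsilon|^{k_2(1+1/k_2-1)}=|\epsilon|$ and reconstructing the $\tau_2$-part of the target weight. The $s_1,x_1$ part is the genuine convolution: after the substitution $s_1\mapsto s_1$, $x_1\mapsto x_1$ one gets an iterated integral of the shape $\int_0^{\tau_1^{k_1}}(\tau_1^{k_1}-s_1)^{1/k_1}\frac{1}{s_1}\int_0^{s_1}\frac{(\text{weights})}{(s_1-x_1)x_1}dx_1\,ds_1$, and one bounds the exponential factors using $\nu_1|(s_1-x_1)/\epsilon^{k_1}|^{?}+\nu_1|x_1/\epsilon^{k_1}|^{?}\le\nu_1|\tau_1/\epsilon^{k_1}|^{?}$ together with the polynomial-weight manipulations from the proof of Proposition~\ref{prop137}; this yields the factor $|\epsilon|$ coming from the $\tau_1$-block and reconstitutes the $\tau_1$-part of the target weight. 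Multiplying the two blocks gives $|\epsilon|^2$, as claimed, with $C_{3.2}$ depending only on the stated data.

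The main obstacle — and the only place where this differs structurally from Proposition~\ref{prop137} — is bookkeeping the two different norms on $f$ and $g$: $f$ carries only the one-dimensional weight $\frac{1+|\tau_1/\epsilon|^{2k_1}}{|\tau_1/\epsilon|}e^{\beta|m|-\nu_1|\tau_1/\epsilon|^{k_1}}$ and has no dependence on (and no decay in) $\tau_2$, whereas $g$ carries the full two-dimensional weight. One must check that when $f$ is evaluated at $(s_1-x_1)^{1/k_1}$ it supplies precisely the factor needed to run the one-variable estimate in $\tau_1$, while the $\tau_2$-decay needed to reconstruct the target weight comes entirely from $g$ (through its $s_2$-dependence) and survives the $s_2$-integration. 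Concretely one has to verify that the estimate
$$\frac{|(s_1-x_1)/\epsilon^{k_1}|^{1/k_1}}{1+|(s_1-x_1)/\epsilon^{k_1}|^{2}}\cdot\frac{|x_1/\epsilon^{k_1}|^{1/k_1}}{1+|x_1/\epsilon^{k_1}|^{2}}\cdot\frac{1}{(s_1-x_1)x_1}$$
integrates against $(\tau_1^{k_1}-s_1)^{1/k_1}/s_1$ over the simplex $0\le x_1\le s_1\le\tau_1^{k_1}$ to something controlled by $|\epsilon|\,\frac{|\tau_1/\epsilon|}{1+|\tau_1/\epsilon|^{2k_1}}$, which is the content of the $\tau_1$-slice of Proposition~\ref{prop137}'s proof and goes through unchanged. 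Since the paper explicitly says these proofs are ``analogous to those in Section 2 of~\cite{lama}'' and that ``the integrals can be split accordingly,'' I would present the argument at the level of: reduce the $m$-integral as in~\cite{lama}; split off the $s_2$-integral via Proposition~\ref{prop1}/\ref{prop2lama} in $\tau_2$; apply the $\tau_1$-convolution estimate from the proof of Proposition~\ref{prop137}; multiply the resulting $|\epsilon|\cdot|\epsilon|=|\epsilon|^2$.
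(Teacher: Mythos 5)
Your proposal is correct and follows essentially the same route as the paper: the paper's proof likewise runs the $\tau_1$-convolution block together with the $m$-integral exactly as in Proposition 3 of~\cite{lama} (yielding one factor of $|\epsilon|$ and the $\tau_1$-part of the weight), and then disposes of the decoupled $s_2$-integral by the change of variable $|s_2|=h|\epsilon|^{k_2}$, which is precisely your one-variable Proposition~\ref{prop1}-type estimate in $\tau_2$ producing the second factor of $|\epsilon|$ and a bounded supremum. The only cosmetic difference is that the paper writes the $\tau_2$-step out explicitly as the boundedness of a function $A(x)$ rather than quoting a lemma.
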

\begin{proof}
A first stage of the proof is analogous to that of Proposition 3 in~\cite{lama}, concerning the operators involving the first variable in time, $\tau_1$. One leads to  
\begin{multline*}
|| b(m) a_{\gamma_2,k_2}(\tau_2)\int_{0}^{\tau_1^{k_1}}\int_{0}^{\tau_2^{k_2}} (\tau_1^{k_1}-s_1)^{\frac{1}{k_1}}(\tau_2^{k_2}-s_2)^{\frac{1}{k_2}}\left( \int_{0}^{s_1}\int_{-\infty}^{+\infty} Q_{1}(i(m-m_{1}))\right.\\
\left.f((s_1-x_1)^{1/{k_1}},m-m_{1})\times Q_{2}(im_{1}) g(x_1^{1/{k_1}},s_2^{1/{k_2}},m_{1}) \frac{1}{(s_1-x_1)x_1}  dx_1 dm_{1} \right)\frac{ds_2}{s_2} ds_1 ||_{(\bnu,\beta,\mu,\bk,\epsilon)}\\
\le\sup_{\tau_2\in \overline{D}(0,\rho)\cup S_{d_2}}\tilde{C}_{3.2}|\epsilon|\frac{1+\left|\frac{\tau_2}{\epsilon}\right|^{2k_2}}{\left|\frac{\tau_2}{\epsilon}\right|}\exp(-\nu_2\left|\frac{\tau_2}{\epsilon}\right|^{k_2})\\
\int_0^{|\tau_2|^{k_2}}(|\tau_2|^{k_2}-|s_2|)^{1/k_2}\frac{|s_2|^{1/k_2}/|\epsilon|}{1+|s_2|^2/|\epsilon|^{2k_2}}\exp(\nu_2\frac{|s_2|}{|\epsilon|^{k_2}})\frac{d|s_2|}{|s_2|}\left\|f\right\|_{(\nu_1,\beta,\mu,k_1,\epsilon)}\left\|g\right\|_{(\bnu,\beta,\mu,\bk,\epsilon)},
\end{multline*}
for some $\tilde{C}_{3.2}>0$. The change of variable $|s_2|=h|\epsilon|^{k_2}$ in the integral above allow us guarantee that the previous expression is upper bounded by
\begin{multline*}
\tilde{C}_{3.2}|\epsilon|^2\sup_{x\ge 0}\frac{1+x^2}{x^{1/k_2}}\exp(-\nu_2x)\int_0^x(x-h)^{1/k_2}\frac{h^{1/k_2}}{1+h^2}\exp(\nu_2 h)\frac{dh}{h}\left\|f\right\|_{(\nu_1,\beta,\mu,k_1,\epsilon)}\left\|g\right\|_{(\bnu,\beta,\mu,\bk,\epsilon)}\\
=\tilde{C}_{3.2}|\epsilon|^2\sup_{x\ge0}A(x)\left\|f\right\|_{(\nu_1,\beta,\mu,k_1,\epsilon)}\left\|g\right\|_{(\bnu,\beta,\mu,\bk,\epsilon)}.
\end{multline*}
It is straight to check that $A(x)$ is bounded for all $x\ge0$, and the result is attained.
\end{proof}

The previous result is also valid by interchanging the role of the variables $\tau_1$ and $\tau_2$. The following definition deals with a Banach space considered in~\cite{lama}. We provide inner operaions linking this and the previous Banach spaces.

\begin{defin} Let $\beta, \mu \in \mathbb{R}$. We denote by
$E_{(\beta,\mu)}$ the vector space of continuous functions $h : \mathbb{R} \rightarrow \mathbb{C}$ such that
$$ ||h(m)||_{(\beta,\mu)} = \sup_{m \in \mathbb{R}} (1+|m|)^{\mu} \exp( \beta |m|) |h(m)| $$
is finite. The space $E_{(\beta,\mu)}$ equipped with the norm $||.||_{(\beta,\mu)}$ is a Banach space.
\end{defin}

\begin{prop}\label{prop6} Let $Q(X),R(X) \in \mathbb{C}[X]$ such that
\begin{equation}
\mathrm{deg}(R) \geq \mathrm{deg}(Q) \ \ , \ \ R(im) \neq 0 \label{cond_R_Q}
\end{equation}
for all $m \in \mathbb{R}$. Assume that $\mu > \mathrm{deg}(Q) + 1$. Let $m \mapsto b(m)$ be a continuous function such that
$$
|b(m)| \leq \frac{1}{|R(im)|},\quad m \in \mathbb{R}.
$$
Then, there exists a constant $C_{4}>0$ (depending on $Q,R,\mu,\bk,\nu$) such that
\begin{multline}
|| b(m) \int_{0}^{\tau_1^{k_1}} \int_{0}^{\tau_2^{k_2}} (\tau_1^{k_1}-s_1)^{\frac{1}{k_1}} (\tau_2^{k_2}-s_2)^{\frac{1}{k_2}}\int_{-\infty}^{+\infty} f(m-m_{1})Q(im_{1})g(s_1^{1/k_1},s_2^{1/k_2},m_{1})dm_{1}
\frac{ds_2}{s_2}\frac{ds_1}{s_1}||_{(\bnu,\beta,\mu,\bk,\epsilon)}\\
\leq C_{4}|\epsilon|^2 ||f(m)||_{(\beta,\mu)} ||g(\btau,m)||_{(\bnu,\beta,\mu,\bk,\epsilon)}
\label{norm_conv_f_g<norm_f_beta_mu_times_norm_g}
\end{multline}
for all $f(m) \in E_{(\beta,\mu)}$, all $g(\btau,m) \in F_{(\bnu,\beta,\mu,\bk,\epsilon)}^{\bd}$.
\end{prop}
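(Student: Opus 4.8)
The plan is to mimic the structure of the proof of Proposition~\ref{prop5}, treating the two time variables separately since the convolution with respect to $\tau_1$ (now coupling $f(m)\in E_{(\beta,\mu)}$ with $g$ through a Fourier-type convolution in $m$) is exactly the situation handled in Proposition 3 of~\cite{lama}, while the $\tau_2$-integration $\int_0^{\tau_2^{k_2}}(\tau_2^{k_2}-s_2)^{1/k_2}(\cdots)\,\frac{ds_2}{s_2}$ is of the type controlled in Proposition~\ref{prop1} (with $\gamma_{22}=1/k_2\in[1/k_2,1]$). Concretely, first I would bound the inner $m_1$-integral: using $\mathrm{deg}(R)\ge\mathrm{deg}(Q)$, $R(im)\ne 0$, $|b(m)|\le 1/|R(im)|$ and $\mu>\mathrm{deg}(Q)+1$, the map $(f,g)\mapsto b(m)\int_{-\infty}^{+\infty} f(m-m_1)Q(im_1)g(\cdot,\cdot,m_1)\,dm_1$ is bounded from $E_{(\beta,\mu)}\times F_{(\bnu,\beta,\mu,\bk,\epsilon)}^{\bd}$ into $F_{(\bnu,\beta,\mu,\bk,\epsilon)}^{\bd}$; this is the standard estimate on convolution in the $m$ variable (the weight $(1+|m|)^\mu e^{\beta|m|}$ is submultiplicative up to a constant, and $\int_{\mathbb{R}}\frac{(1+|m_1|)^{\mathrm{deg}(Q)}}{(1+|m-m_1|)^\mu(1+|m_1|)^\mu}\,dm_1$ is uniformly bounded in $m$ under $\mu>\mathrm{deg}(Q)+1$), and it leaves the $\tau$-dependence untouched, so it does not change the $\exp(\nu_1|\tau_1/\epsilon|^{k_1}-\nu_2|\tau_2/\epsilon|^{k_2})$ weights.

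Next I would handle the two Laplace-type convolutions. Passing to moduli and writing $h_j(\btau,m)$ for the function produced by the inner $m_1$-integral (which satisfies $\|h_j\|_{(\bnu,\beta,\mu,\bk,\epsilon)}\le C\|f\|_{(\beta,\mu)}\|g\|_{(\bnu,\beta,\mu,\bk,\epsilon)}$ by the previous step), I would estimate
$$
\Bigl\|\int_0^{\tau_1^{k_1}}\int_0^{\tau_2^{k_2}}(\tau_1^{k_1}-s_1)^{1/k_1}(\tau_2^{k_2}-s_2)^{1/k_2} h_j(s_1^{1/k_1},s_2^{1/k_2},m)\,\frac{ds_2}{s_2}\frac{ds_1}{s_1}\Bigr\|_{(\bnu,\beta,\mu,\bk,\epsilon)}
$$
by a direct application of Proposition~\ref{prop1} with $\gamma_{21}=1/k_1$ and $\gamma_{22}=1/k_2$, which are admissible since $1/k_j\le 1/k_j\le 1$. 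This yields the factor $|\epsilon|^{k_1\cdot(1/k_1)+k_2\cdot(1/k_2)}=|\epsilon|^2$, giving the claimed $C_4|\epsilon|^2$ bound. Chaining the two estimates produces the statement, with $C_4=C_1\cdot C$ in terms of the constants of Proposition~\ref{prop1} and of the $m$-convolution estimate.

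The only mildly delicate point is the bookkeeping of the nested integrals: one must verify that the reduction to moduli is legitimate (the integrands are continuous and the domains of integration are the real segments $[0,|\tau_j|^{k_j}]$ after deforming the paths, as in~\cite{lama}), and that the $m$-convolution estimate can indeed be applied pointwise in $(s_1,s_2)$ before integrating in $(s_1,s_2)$ — i.e.\ that the constant in the $m$-estimate is uniform in the remaining variables, which it is because it depends only on $Q$, $R$, $\mu$. I expect the main obstacle, if any, to be purely notational: making sure the exponential weights $\exp(-\nu_1|\tau_1/\epsilon|^{k_1})$ and $\exp(-\nu_2|\tau_2/\epsilon|^{k_2})$ are carried through both convolution steps consistently, exactly as in the proof of Proposition~\ref{prop5}, where the single $\tau_1$-version is worked out and the $\tau_2$-direction is reduced to a bounded one-dimensional integral via the change of variable $|s_2|=h|\epsilon|^{k_2}$. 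Since every individual ingredient is already established earlier in the section (Proposition~\ref{prop1} and the $m$-convolution bound underlying Proposition~\ref{prop6} of~\cite{lama}), no new technique is required, and the proof reduces to invoking these lemmas in the right order.
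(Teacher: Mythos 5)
Your argument is correct and is essentially the proof the paper intends: Proposition~\ref{prop6} is one of the results stated without proof and deferred to the analogous computation in Section 2 of~\cite{lama}, which proceeds exactly as you do — first dispose of the $m_1$-convolution using $\mu>\mathrm{deg}(Q)+1$, $\mathrm{deg}(R)\ge\mathrm{deg}(Q)$ and $e^{-\beta|m-m_1|}e^{-\beta|m_1|}\le e^{-\beta|m|}$, with a constant uniform in $(\tau_1,\tau_2)$, and then extract $|\epsilon|^{2}$ from the double $s$-integral via the change of variables $|s_j|=h|\epsilon|^{k_j}$, which is precisely what your appeal to Proposition~\ref{prop1} with $\gamma_{2j}=1/k_j$ encapsulates. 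The only blemish is a sign slip in the exponential weight you quote (when bounding $|g|$ by its norm the factor is $\exp(\nu_1|\tau_1/\epsilon|^{k_1}+\nu_2|\tau_2/\epsilon|^{k_2})$, not with a minus sign on the second term), which is immaterial to the estimate.
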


\begin{prop}\label{prop7} Let $Q_{1}(X),Q_{2}(X),R(X) \in \mathbb{C}[X]$ such that
\begin{equation}
\mathrm{deg}(R) \geq \mathrm{deg}(Q_{1}) \ \ , \ \ \mathrm{deg}(R) \geq \mathrm{deg}(Q_{2}) \ \ , \ \ R(im) \neq 0,\quad m\in\mathbb{R}.
\label{cond_R_Q1_Q2}
\end{equation}
Assume that $\mu > \max( \mathrm{deg}(Q_{1})+1, \mathrm{deg}(Q_{2})+1 )$. Then, there exists a
constant $C_{5}>0$ (depending on $Q_{1},Q_{2},R,\mu$) such that
\begin{multline}
|| \frac{1}{R(im)} \int_{-\infty}^{+\infty} Q_{1}(i(m-m_{1})) f(m-m_{1}) Q_{2}(im_{1})g(m_{1}) dm_{1} ||_{(\beta,\mu)}\\
\leq C_{5} ||f(m)||_{(\beta,\mu)}||g(m)||_{(\beta,\mu)}
\end{multline}
for all $f(m),g(m) \in E_{(\beta,\mu)}$. Therefore, $(E_{(\beta,\mu)},||.||_{(\beta,\mu)})$ becomes a Banach algebra for the product
$\star$ defined by
$$ f \star g (m) = \frac{1}{R(im)} \int_{-\infty}^{+\infty} Q_{1}(i(m-m_{1})) f(m-m_{1}) Q_{2}(im_{1})g(m_{1}) dm_{1}.$$
As a particular case, when $f,g \in E_{(\beta,\mu)}$ with $\beta>0$ and $\mu>1$, the classical convolution product
$$ f \ast g (m) = \int_{-\infty}^{+\infty} f(m-m_{1})g(m_{1}) dm_{1} $$
belongs to $E_{(\beta,\mu)}$.
\end{prop}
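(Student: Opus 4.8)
The plan is to prove Proposition~\ref{prop7} by reducing the convolution estimate on $E_{(\beta,\mu)}$ to a classical one-variable convolution inequality, exploiting the polynomial growth control coming from $\mathrm{deg}(R)\ge\mathrm{deg}(Q_1),\mathrm{deg}(Q_2)$ together with the triangle-type inequality $|m|\le |m-m_1|+|m_1|$. First I would write, for $f,g\in E_{(\beta,\mu)}$, the pointwise bound
\[
\Bigl|\tfrac{1}{R(im)}\int_{-\infty}^{+\infty}Q_1(i(m-m_1))f(m-m_1)Q_2(im_1)g(m_1)\,dm_1\Bigr|
\le \frac{1}{|R(im)|}\int_{-\infty}^{+\infty}|Q_1(i(m-m_1))|\,|Q_2(im_1)|\,|f(m-m_1)|\,|g(m_1)|\,dm_1 .
\]
Then I would insert the definition of the norms: $|f(m-m_1)|\le \|f\|_{(\beta,\mu)}(1+|m-m_1|)^{-\mu}e^{-\beta|m-m_1|}$ and similarly for $g(m_1)$, so that the factor $e^{\beta|m|}(1+|m|)^\mu$ that multiplies the left-hand side in the supremum defining $\|\cdot\|_{(\beta,\mu)}$ gets absorbed using $e^{\beta|m|}\le e^{\beta|m-m_1|}e^{\beta|m_1|}$ and $(1+|m|)^\mu\le 2^\mu(1+|m-m_1|)^\mu(1+|m_1|)^\mu$ (or the sharper $(1+|m|)^\mu\le (1+|m-m_1|)^\mu(1+|m_1|)^\mu$ for $\mu\ge 0$, which is standard). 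This collapses the problem to showing that
\[
\mathcal{B}(m):=\int_{-\infty}^{+\infty}\frac{|Q_1(i(m-m_1))|}{|R(im)|}\,|Q_2(im_1)|\,(1+|m-m_1|)^{-\mu}(1+|m_1|)^{-\mu}\,dm_1
\]
is bounded uniformly in $m\in\mathbb{R}$ (up to the constant $(1+|m|)^\mu$ which I keep outside and control via $|R(im)|$).

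The key estimate, and the place where all the hypotheses are used, is bounding the rational/polynomial prefactors. Since $\mathrm{deg}(R)\ge\max(\mathrm{deg}(Q_1),\mathrm{deg}(Q_2))$ and $R(im)\ne0$ for all real $m$, there is a constant $\mathfrak{C}>0$ with $|Q_1(ix)|/|R(iy)|\le \mathfrak{C}$ whenever... — more precisely I would use a splitting argument on the domain: on $\{|m_1|\ge |m|/2\}$ one has $|m-m_1|,|m_1|$ comparable to handle $Q_2$ against the decay $(1+|m_1|)^{-\mu}$, while on $\{|m_1|\le |m|/2\}$ one has $|m-m_1|\ge |m|/2$, so $|Q_1(i(m-m_1))|\le \mathfrak{C}'(1+|m-m_1|)^{\mathrm{deg}(Q_1)}$ is controlled by a factor of $(1+|m|)^{\mathrm{deg}(R)}$ absorbed by $1/|R(im)|$, and $|Q_2(im_1)|(1+|m_1|)^{-\mu}\le \mathfrak{C}''(1+|m_1|)^{\mathrm{deg}(Q_2)-\mu}$, which is integrable precisely because $\mu>\mathrm{deg}(Q_2)+1$. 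Symmetrically on the other piece one uses $\mu>\mathrm{deg}(Q_1)+1$. Collecting these bounds yields $\mathcal{B}(m)(1+|m|)^\mu\le C_5$ uniformly, hence the claimed inequality. This is exactly the content of the Banach algebra property for the twisted product $\star$; closure of $\star$ on $E_{(\beta,\mu)}$ is immediate once the norm inequality is established, and associativity/commutativity-type issues are not claimed.

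The main obstacle I expect is the careful bookkeeping of the polynomial factors in the domain decomposition: one must make sure that the power of $(1+|m|)$ picked up from $|Q_1|$ (resp. $|Q_2|$) on the region where the other argument is large never exceeds $\mathrm{deg}(R)$, so that it is genuinely absorbed by $1/|R(im)|\le \tilde{C}(1+|m|)^{-\mathrm{deg}(R)}$ (valid since $R(im)$ has no real zeros and $\mathrm{deg}(R)$ governs its growth at infinity). Everything else — the exponential splitting, the $(1+|\cdot|)^\mu$ submultiplicativity, and the convergence of $\int (1+|m_1|)^{\mathrm{deg}(Q_j)-\mu}\,dm_1$ under $\mu>\mathrm{deg}(Q_j)+1$ — is routine. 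Finally, for the last assertion, taking $Q_1=Q_2=R\equiv 1$ in the already-proved inequality (legitimate since then $\mathrm{deg}(R)=\mathrm{deg}(Q_j)=0$ and the condition $\mu>\mathrm{deg}(Q_j)+1$ becomes $\mu>1$) shows that $f\ast g\in E_{(\beta,\mu)}$ whenever $\beta>0$ and $\mu>1$, which completes the proof.
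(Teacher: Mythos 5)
Your argument is correct and is essentially the standard one; note that the paper itself states Proposition~\ref{prop7} without proof, deferring to Section~2 of~\cite{lama}, where precisely this reduction (factor the exponentials via $e^{\beta|m|}\le e^{\beta|m-m_1|}e^{\beta|m_1|}$, then split the remaining integral at $|m_1|=|m|/2$) is carried out. Two small points of hygiene, neither of which is a real gap: first, your parenthetical remark that the leftover factor $(1+|m|)^{\mu}$ is ``controlled via $|R(im)|$'' cannot be taken literally, since no relation between $\mu$ and $\mathrm{deg}(R)$ is assumed --- as your own splitting then shows, $(1+|m|)^{\mu}$ is cancelled by $(1+|m-m_1|)^{-\mu}$ on $\{|m_1|\le|m|/2\}$ and by $(1+|m_1|)^{-\mu}$ on the complementary region, while $1/|R(im)|\le \tilde{C}(1+|m|)^{-\mathrm{deg}(R)}$ only has to absorb the polynomial growth of $Q_1$ or $Q_2$, which it does because $\mathrm{deg}(R)\ge\max(\mathrm{deg}(Q_1),\mathrm{deg}(Q_2))$. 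Second, your assertion that $|m-m_1|$ and $|m_1|$ are comparable on $\{|m_1|\ge|m|/2\}$ is false (take $m_1=m$ with $|m|$ large), but nothing you do uses it: what is actually needed there is only $1+|m_1|\ge\frac{1}{2}(1+|m|)$, so that $|Q_2(im_1)|(1+|m_1|)^{-\mu}\le C(1+|m|)^{\mathrm{deg}(Q_2)-\mu}$ kills the prefactor while $\int(1+|m-m_1|)^{\mathrm{deg}(Q_1)-\mu}\,dm_1$ converges thanks to $\mu>\mathrm{deg}(Q_1)+1$. With these readings the bookkeeping closes exactly as you indicate, and the specialization $Q_1=Q_2=R\equiv 1$ for the final assertion is legitimate.
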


\section{Laplace transform, asymptotic expansions and Fourier transform}

We recall the definition of $k-$Borel summable formal power series with coefficients in a fixed Banach space $( \mathbb{E}, ||.||_{\mathbb{E}} )$. This tool has been adapted from the classical version in \cite{ba}, Section 3.2.

\begin{defin} Let $k \geq 1$ be an integer. Let $m_{k}(n)$ be the sequence defined by
$$ m_{k}(n) = \Gamma(\frac{n}{k}) =  \int_{0}^{+\infty} t^{\frac{n}{k}-1} e^{-t} dt,\quad n\ge1. $$
A formal power series $\hat{X}(T) = \sum_{n=1}^{\infty}  a_{n}T^{n} \in T\mathbb{E}[[T]]$ is $m_{k}-$summable with respect to $t$ in the direction $d \in [0,2\pi)$ if \medskip

{\bf i)} there exists $\rho \in \mathbb{R}_{+}$ such that the following formal series, called a formal $m_{k}-$Borel transform of
$\hat{X}$ 
$$ \mathcal{B}_{m_k}(\hat{X})(\tau) = \sum_{n=1}^{\infty} \frac{ a_{n} }{ \Gamma(\frac{n}{k}) } \tau^{n}
\in \tau\mathbb{E}[[\tau]],$$
is absolutely convergent for $|\tau| < \rho$. \medskip

{\bf ii)} there exists $\delta > 0$ such that the series $\mathcal{B}_{m_k}(\hat{X})(\tau)$ can be analytically continued with
respect to $\tau$ in a sector
$S_{d,\delta} = \{ \tau \in \mathbb{C}^{\ast} : |d - \mathrm{arg}(\tau) | < \delta \} $. Moreover, there exist $C >0$ and $K >0$
such that
$$ ||\mathcal{B}_{m_k}(\hat{X})(\tau)||_{\mathbb{E}}
\leq C e^{ K|\tau|^{k} },\quad \tau \in S_{d, \delta}.$$
\end{defin}

Under the previous hypotheses, the vector valued Laplace transform of $\mathcal{B}_{m_k}(\hat{X})(\tau)$ in the direction $d$ is defined by
$$ \mathcal{L}^{d}_{m_k}(\mathcal{B}(\hat{X}))(T) = k \int_{L_{\gamma}}
\mathcal{B}_{m_k}(\hat{X})(u) e^{ - ( u/T )^{k} } \frac{d u}{u},$$
along a half-line $L_{\gamma} = \mathbb{R}_{+}e^{i\gamma} \subset S_{d,\delta} \cup \{ 0 \}$, where $\gamma$ depends on
$T$ and is chosen in such a way that $\cos(k(\gamma - \mathrm{arg}(T))) \geq \delta_{1} > 0$, for some fixed $\delta_{1}$.
The function $\mathcal{L}^{d}_{m_k}(\mathcal{B}_{m_k}(\hat{X}))(T)$ is well defined, holomorphic and bounded in any sector
$$ S_{d,\theta,R^{1/k}} = \{ T \in \mathbb{C}^{\ast} : |T| < R^{1/k} \ \ , \ \ |d - \mathrm{arg}(T) | < \theta/2 \},$$
where $\frac{\pi}{k} < \theta < \frac{\pi}{k} + 2\delta$ and
$0 < R < \delta_{1}/K$. This function is called the $m_{k}-$sum of the formal series $\hat{X}(T)$ in the direction $d$.\medskip

\noindent Some elementary properties regarding $m_{k}-$sums of formal power series are the following:\\

\noindent 1)  $\mathcal{L}^{d}_{m_k}(\mathcal{B}_{m_k}(\hat{X}))(T)$ admits $\hat{X}(T)$ as its
Gevrey asymptotic expansion of order $1/k$ with respect to $t$ on $S_{d,\theta,R^{1/k}}$. More precisely, for every $\frac{\pi}{k} < \theta_{1} < \theta$, there exist $C,M > 0$
such that
\begin{equation}
 ||\mathcal{L}^{d}_{m_k}(\mathcal{B}_{m_k}(\hat{X}))(T) - \sum_{p=1}^{n-1} a_p T^{p}||_{\mathbb{E}} \leq
CM^{n}\Gamma(1+\frac{n}{k})|T|^{n}, \quad n\ge2,\quad T \in S_{d,\theta_{1},R^{1/k}}\label{Laplace_k_Gevrey_ae}
\end{equation}
Unicity of such function on sectors $S_{d,\theta_{1},R^{1/k}}$ with opening $\theta_{1} > \frac{\pi}{k}$ is guaranteed by Watson's lemma (see Proposition 11 p. 75, \cite{ba}).\medskip

\noindent 2) Let us assume that $( \mathbb{E}, ||.||_{\mathbb{E}} )$ also has the structure of a Banach algebra for a product $\star$.
Let $\hat{X}_{1}(T),\hat{X}_{2}(T) \in T\mathbb{E}[[T]]$ be $m_{k}-$summable formal power series in direction
$d$. Let $q_{1} \geq q_{2} \geq 1$ be integers. We assume that 
$\hat{X}_{1}(T)+\hat{X}_{2}(T)$, $\hat{X}_{1}(T) \star \hat{X}_{2}(T)$ and
$T^{q_1}\partial_{T}^{q_2}\hat{X}_{1}(T)$, which are elements of $T\mathbb{E}[[T]]$, are $m_{k}-$summable in direction $d$.
Then, the following equalities
\begin{multline}
\mathcal{L}^{d}_{m_k}(\mathcal{B}_{m_k}(\hat{X}_{1}))(T) +
\mathcal{L}^{d}_{m_k}(\mathcal{B}_{m_k}(\hat{X}_{2}))(T) =
\mathcal{L}^{d}_{m_k}(\mathcal{B}_{m_k}(\hat{X}_{1} + \hat{X}_{2}))(T),\\
\mathcal{L}^{d}_{m_k}(\mathcal{B}_{m_k}(\hat{X}_{1}))(T) \star
\mathcal{L}^{d}_{m_k}(\mathcal{B}_{m_k}(\hat{X}_{2}))(T) =
\mathcal{L}^{d}_{m_k}(\mathcal{B}_{m_k}(\hat{X}_{1} \star \hat{X}_{2}))(T)\\
T^{q_1}\partial_{T}^{q_2}\mathcal{L}^{d}_{m_k}(\mathcal{B}_{m_k}(\hat{X}_{1}))(T) =
\mathcal{L}^{d}_{m_k}(\mathcal{B}_{m_k}(T^{q_1}\partial_{T}^{q_2}\hat{X}_{1}))(T) \label{sum_prod_deriv_m_k_sum}
\end{multline}
hold for all $T \in S_{d,\theta,R^{1/k}}$.\medskip

The next result recalls some properties on the $m_{k}-$Borel transform, successfully used in~\cite{lama,lama1}
\begin{prop}\label{prop8} Let $(\mathbb{E},||.||_{\mathbb{E}})$ be a Banach algebra for some product $\star$. Let $\hat{f}(t) = \sum_{ n \geq 1} f_{n}t^{n}\in\mathbb{E}[[t]]$, $\hat{g}(t) = \sum_{n \geq 1} g_{n}t^{n}\in\mathbb{E}[[t]]$. Let $k,m \geq 1$ be integers. Then, the following formal identities hold.
\begin{equation}
\mathcal{B}_{m_k}(t^{k+1}\partial_{t}\hat{f}(t))(\tau) = k \tau^{k} \mathcal{B}_{m_k}(\hat{f}(t))(\tau) \label{Borel_diff}
\end{equation}
\begin{equation}
\mathcal{B}_{m_k}(t^{m}\hat{f}(t))(\tau) = \frac{\tau^{k}}{\Gamma(\frac{m}{k})}
\int_{0}^{\tau^{k}} (\tau^{k} - s)^{\frac{m}{k}-1} \mathcal{B}_{m_k}(\hat{f}(t))(s^{1/k}) \frac{ds}{s} \label{Borel_mult_monom}
\end{equation}
and
\begin{equation}
\mathcal{B}_{m_k}( \hat{f}(t) \star \hat{g}(t) )(\tau) = \tau^{k}\int_{0}^{\tau^{k}}
\mathcal{B}_{m_k}(\hat{f}(t))((\tau^{k}-s)^{1/k}) \star \mathcal{B}_{m_k}(\hat{g}(t))(s^{1/k}) \frac{1}{(\tau^{k}-s)s} ds
\label{Borel_product}
\end{equation}
\end{prop}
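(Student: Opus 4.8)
The plan is to verify each of the three formal identities by a direct termwise computation, using the definition of the $m_k$-Borel transform $\mathcal{B}_{m_k}\left(\sum a_n t^n\right)(\tau)=\sum a_n\tau^n/\Gamma(n/k)$ together with the fact that all three operations (multiplication by a monomial, the irregular differential operator $t^{k+1}\partial_t$, and the convolution product $\star$) act in a way that is diagonal or triangular on the monomial basis, so that it suffices to compare coefficients of each power $\tau^n$.

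First I would prove \eqref{Borel_diff}. Write $\hat f(t)=\sum_{n\ge1}f_nt^n$. Then $t^{k+1}\partial_t\hat f(t)=\sum_{n\ge1}nf_nt^{n+k}$, so $\mathcal{B}_{m_k}(t^{k+1}\partial_t\hat f)(\tau)=\sum_{n\ge1}nf_n\tau^{n+k}/\Gamma(\tfrac{n+k}{k})$. Using $\Gamma(\tfrac{n+k}{k})=\Gamma(\tfrac nk+1)=\tfrac nk\Gamma(\tfrac nk)$, the coefficient becomes $nf_n/(\tfrac nk\Gamma(\tfrac nk))=kf_n/\Gamma(\tfrac nk)$, hence the series equals $k\tau^k\sum_{n\ge1}f_n\tau^n/\Gamma(\tfrac nk)=k\tau^k\mathcal{B}_{m_k}(\hat f)(\tau)$, which is the claim.

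Next I would prove \eqref{Borel_mult_monom}. Here $t^m\hat f(t)=\sum_{n\ge1}f_nt^{n+m}$, so the left side is $\sum_{n\ge1}f_n\tau^{n+m}/\Gamma(\tfrac{n+m}{k})$. On the right side, with $\mathcal{B}_{m_k}(\hat f)(s^{1/k})=\sum_{n\ge1}f_ns^{n/k}/\Gamma(\tfrac nk)$, I would interchange sum and integral and compute $\int_0^{\tau^k}(\tau^k-s)^{\frac mk-1}s^{\frac nk-1}ds$ via the Beta integral (substituting $s=\tau^k\sigma$): it equals $\tau^{n+m-k}B(\tfrac mk,\tfrac nk)=\tau^{n+m-k}\Gamma(\tfrac mk)\Gamma(\tfrac nk)/\Gamma(\tfrac{n+m}{k})$. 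Multiplying by the prefactor $\tau^k/\Gamma(\tfrac mk)$ and by $f_n/\Gamma(\tfrac nk)$ and summing over $n$ reproduces exactly the left-hand side. Finally, for \eqref{Borel_product} I would expand $\hat f\star\hat g$; since $\star$ is a convolution in the variable $m$ but an ordinary (Cauchy) product in the time variable $t$ — i.e. the $t^N$ coefficient of $\hat f\star\hat g$ is $\sum_{p+q=N}f_p\star g_q$ — the left side is $\sum_{N\ge2}\left(\sum_{p+q=N}f_p\star g_q\right)\tau^N/\Gamma(\tfrac Nk)$. On the right side I would expand both Borel transforms, use bilinearity of $\star$ to pull the $f_p\star g_q$ out, and reduce to the same Beta-integral computation $\int_0^{\tau^k}(\tau^k-s)^{\frac pk-1}s^{\frac qk-1}ds$ as above, which converts the product $\tau^k/(\Gamma(\tfrac pk)\Gamma(\tfrac qk))$ times the integral into $\tau^{p+q}/\Gamma(\tfrac{p+q}{k})$; summing over $p,q$ with $p+q=N$ and then over $N$ gives the left side.

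Since these are asserted as \emph{formal} identities, no convergence questions arise and the interchange of summation and integration is purely formal; the only genuinely substantive ingredient is the Beta-integral evaluation, which is standard. Thus I do not expect a real obstacle here — the main point to be careful about is bookkeeping of the Gamma-function arguments (e.g. the shift by $1$ in $\Gamma(\tfrac nk+1)$ for the differential identity, and the fact that $\star$ leaves the $t$-grading as a Cauchy product while acting as a convolution only in $m$), and making sure the lower limits of summation ($n\ge1$ for single series, $N\ge2$ for the product) are tracked correctly so that all Gamma arguments stay in their domain of definition.
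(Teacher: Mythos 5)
Your computation is correct and is exactly the standard argument: the paper states Proposition~\ref{prop8} without proof (citing \cite{lama,lama1}), and the proofs there proceed by the same termwise comparison, with the Euler Beta integral $\int_0^{\tau^k}(\tau^k-s)^{\frac{m}{k}-1}s^{\frac{n}{k}-1}\,ds=\tau^{m+n-k}\,\Gamma(\tfrac{m}{k})\Gamma(\tfrac{n}{k})/\Gamma(\tfrac{m+n}{k})$ doing all the work for \eqref{Borel_mult_monom} and \eqref{Borel_product}, and the identity $\Gamma(\tfrac{n}{k}+1)=\tfrac{n}{k}\Gamma(\tfrac{n}{k})$ handling \eqref{Borel_diff}. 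All Gamma arguments and summation ranges are tracked correctly, so there is nothing to add.
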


Some regular properties of inverse Fourier transform are also involved in our construction.
\begin{prop}\label{prop359}
Let $f \in E_{(\beta,\mu)}$ with $\beta > 0$, $\mu > 1$. The inverse Fourier transform of $f$, defined by
$$ \mathcal{F}^{-1}(f)(x) = \frac{1}{ (2\pi)^{1/2} } \int_{-\infty}^{+\infty} f(m) \exp( ixm ) dm,\quad x\in\mathbb{R},$$
extends to an analytic function on the strip
\begin{equation}
H_{\beta} = \{ z \in \mathbb{C} / |\mathrm{Im}(z)| < \beta \}. \label{strip_H_beta}
\end{equation}
Let $\phi(m) = im f(m) \in E_{(\beta,\mu - 1)}$. Then, it holds
\begin{equation}
\partial_{z} \mathcal{F}^{-1}(f)(z) = \mathcal{F}^{-1}(\phi)(z),\quad z \in H_{\beta}. \label{dz_fourier}
\end{equation}

Let $g \in E_{(\beta,\mu)}$ and put $\psi(m) = \frac{1}{(2\pi)^{1/2}} f \ast g(m)$, the convolution product of $f$ and $g$, for all $m \in \mathbb{R}$.
From Proposition~\ref{prop7}, one gets that $\psi \in E_{(\beta,\mu)}$. Moreover, we have
\begin{equation}
\mathcal{F}^{-1}(f)(z)\mathcal{F}^{-1}(g)(z) = \mathcal{F}^{-1}(\psi)(z),\quad z \in H_{\beta} \label{prod_fourier}
\end{equation}
\end{prop}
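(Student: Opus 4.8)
The statement to prove is Proposition~\ref{prop359}, which collects three standard properties of the inverse Fourier transform on the Banach spaces $E_{(\beta,\mu)}$: analytic continuation to the strip $H_\beta$, compatibility with differentiation, and compatibility with convolution. Let me sketch the proof plan.

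First, analytic extension. For $f \in E_{(\beta,\mu)}$ with $\beta > 0$ and $\mu > 1$, we have $|f(m)| \le \|f\|_{(\beta,\mu)}(1+|m|)^{-\mu}e^{-\beta|m|}$. For $z = x + iy$ with $|y| < \beta$, the integrand $f(m)e^{izm}$ has modulus bounded by $\|f\|_{(\beta,\mu)}(1+|m|)^{-\mu}e^{-(\beta-|y|)|m|}$, which is integrable in $m$ (uniformly for $z$ in any compact subset of $H_\beta$). So the integral defining $\mathcal{F}^{-1}(f)(z)$ converges; holomorphy follows from Morera's theorem (or differentiation under the integral sign), since for each fixed $m$ the function $z \mapsto f(m)e^{izm}$ is entire and the dominating bound is locally uniform on $H_\beta$.

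Second, the differentiation formula. Since $\phi(m) = imf(m)$ satisfies $|\phi(m)| \le |m|\,|f(m)| \le \|f\|_{(\beta,\mu)}(1+|m|)^{-(\mu-1)}e^{-\beta|m|}$, one checks $\phi \in E_{(\beta,\mu-1)}$ with $\mu - 1 > 0$, hence $\mathcal{F}^{-1}(\phi)$ is again well defined and analytic on $H_\beta$. Differentiating under the integral sign (justified by the same locally uniform integrable bound as above) gives $\partial_z \mathcal{F}^{-1}(f)(z) = \frac{1}{(2\pi)^{1/2}}\int_{-\infty}^{+\infty} im f(m)e^{izm}\,dm = \mathcal{F}^{-1}(\phi)(z)$.

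Third, the product formula. By Proposition~\ref{prop7} (its last statement), the classical convolution $f \ast g$ lies in $E_{(\beta,\mu)}$, so $\psi = \frac{1}{(2\pi)^{1/2}} f \ast g$ belongs to $E_{(\beta,\mu)}$ and $\mathcal{F}^{-1}(\psi)$ is well defined and analytic on $H_\beta$. It suffices to verify $\mathcal{F}^{-1}(f)(x)\mathcal{F}^{-1}(g)(x) = \mathcal{F}^{-1}(\psi)(x)$ for real $x$: both sides are analytic on $H_\beta$ and agree on $\mathbb{R}$, so by the identity theorem they agree on all of $H_\beta$. For real $x$, write out the product of the two integrals as a double integral over $(m,m_1)$, whose integrand $f(m)g(m_1)e^{ix(m+m_1)}$ is absolutely integrable on $\mathbb{R}^2$ thanks to the exponential decay of $f$ and $g$; Fubini's theorem then allows the change of variables $m \mapsto m - m_1$ (with $m_1$ fixed) and interchange of integration order to produce $\frac{1}{2\pi}\int_{-\infty}^{+\infty}\big(\int_{-\infty}^{+\infty} f(m-m_1)g(m_1)\,dm_1\big)e^{ixm}\,dm = \frac{1}{(2\pi)^{1/2}}\int_{-\infty}^{+\infty}\psi(m)e^{ixm}\,dm = \mathcal{F}^{-1}(\psi)(x)$.

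The only mild technical point, and hence the "main obstacle", is supplying rigorous justification for the interchanges of limit/derivative/integral: each rests on exhibiting a locally uniform integrable majorant on $H_\beta$ (for the first two parts) and on absolute integrability on $\mathbb{R}^2$ together with Fubini (for the third). These majorants are immediate from the defining inequality of the norm $\|\cdot\|_{(\beta,\mu)}$, so none of these steps is genuinely difficult; the proof is essentially a careful bookkeeping exercise, and this is why the paper states it as a recalled proposition rather than proving it in detail.
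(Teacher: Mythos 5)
Your proof is correct and follows exactly the standard route (locally uniform integrable majorants from the defining bound of $E_{(\beta,\mu)}$, differentiation under the integral sign, and Fubini plus the identity theorem for the product formula); the paper itself states this proposition without proof as a recalled standard fact, so there is nothing to compare against and your argument supplies precisely the omitted details.
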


\section{Formal and analytic solutions of convolution initial value problems with complex parameters}

Let $k_1, k_2 \geq 1$ and $D_1,D_2 \geq 2$ be integers. For $j\in\{1,2\}$ and $1 \leq l_j \leq D_j$, let
$d_{l_1},\delta_{l_1},\Delta_{l_1,l_2}, \tilde{d}_{l_2},\tilde{\delta}_{l_2} $ be non negative integers.
We assume that 
\begin{equation}
1 = \delta_{1}=\tilde{\delta}_1 \ \ , \ \ \delta_{l_1} < \delta_{l_1+1} \ \ , \ \ \tilde{\delta}_{l_2} < \tilde{\delta}_{l_2+1}
\end{equation}
for all $1 \leq l_1 \leq D_1-1$ and $1\le l_2 \le D_2-1$. We also make the assumptions that
\begin{equation}
d_{D_1} = (\delta_{D_1}-1)(k_1+1) \ \ , \ \ d_{l_1} > (\delta_{l_1}-1)(k_1+1)
\label{assum_dl_delta_l_Delta_l1}
\end{equation}
for all $1 \leq l_1 \leq D_1-1$, and
\begin{equation}
\tilde{d}_{D_2} = (\tilde{\delta}_{D_2}-1)(k_2+1) \ \ , \ \ \tilde{d}_{l_2} > (\tilde{\delta}_{l_2}-1)(k_2+1)
\label{assum_dl_delta_l_Delta_l2}
\end{equation}
for all $1 \leq l_2 \leq D_2-1$. We take
\begin{equation}\label{e9000}
\Delta_{D_1,D_2}=d_{D_1}+\tilde{d}_{D_2}-\delta_{D_1}-\tilde{\delta}_{D_2}+2 \ \ , \ \ \Delta_{D_1,0}=d_{D_1}-\delta_{D_1}+1 \ \ , \ \ \Delta_{0,D_2}=\tilde{d}_{D_2}-\tilde{\delta}_{D_2}+1\\
\end{equation}
Let $Q_1(X),Q_{2}(X),R_0(X)\in \mathbb{C}[X]$, and for $1\le l_1\le D_1$ and $1\le l_2\le D_2$, we take $R_{l_1,l_2}(X)\in\mathbb{C}[X]$ such that
\begin{equation}\label{assum_deg_Q_R00}
R_{D_1,l_2}\equiv R_{l_1,D_2}\equiv0,\quad 1\le l_1\le D_1,\quad 1\le l_2\le D_2
\end{equation}
and such that $R_{D_1,D_2}$ can be factorized in the form $R_{D_1,D_2}(X)=R_{D_1,0}(X)R_{0,D_2}(X)$. We write $R_{D_1}:=R_{D_1,0}$ and $R_{D_2}:=R_{0,D_2}$ for simplicity. Let $P_1,P_2$ be polynomials with coefficients belonging to $\mathcal{O}(\overline{D}(0,\epsilon_0))[X]$, for some $\epsilon_0>0$. We assume that
\begin{equation}\label{raicesgrandes0}
\hbox{deg}(Q_j)\ge \hbox{deg}(R_{D_j}),\quad j\in\{1,2\},
\end{equation}
and
\begin{multline}
\mathrm{deg}(Q_j) \geq \mathrm{deg}(R_{D_j}) \ \ , \ \  \mathrm{deg}(R_{D_1,D_2}) \geq \mathrm{deg}(R_{l_1,l_2})\\
\mathrm{deg}(R_{D_1,D_2}) \geq \mathrm{deg}(P_{1}) \ \ , \ \ \mathrm{deg}(R_{D_1,D_2}) \geq \mathrm{deg}(P_{2}) \ \ , \ \ Q_j(im) \neq 0 \ \ , \ \ R_{D_1,D_2}(im) \neq 0 \label{assum_deg_Q_R0}
\end{multline}
for all $m \in \mathbb{R}$, all $j\in\{1,2\}$ and $0 \leq l_j \leq D_j-1$.

For every non negative integer $n$, we take
$m \mapsto C_{0,n}(m,\epsilon)$, and $m \mapsto F_{n+1}(m,\epsilon)$, belonging to the Banach space $E_{(\beta,\mu)}$ for some $\beta > 0$ and
$\mu > \max\{ \mathrm{deg}(P_{1})+1, \mathrm{deg}(P_{2})+1\}$, depending holomorphically on $\epsilon \in D(0,\epsilon_{0})$, for some positive $\epsilon_0$. We assume the existence of $K_0,T_0>0$ such that 
\begin{equation}
||C_{n_1,n_2}(m,\epsilon)|| _{(\beta,\mu)} \leq K_{0} (\frac{1}{T_{0}})^{n_1+n_2} \ \ , \ \
||F_{n_1,n_2}(m,\epsilon)||_{(\beta,\mu)} \leq K_{0} (\frac{1}{T_{0}})^{n_1+n_2} \label{norm_beta_mu_F_n}
\end{equation}
for all $n_1,n_2 \geq 1$ and $\epsilon \in D(0,\epsilon_{0})$. We write $\bT=(T_1,T_2)$ and put 
$$ C_{0}(\bT,m,\epsilon) = \sum_{n_1,n_2 \geq 0} C_{n_1,n_2}(m,\epsilon) T_1^{n_1}T_2^{n_2} \ \ , \ \
F(\bT,m,\epsilon) = \sum_{n_1,n_2 \geq 1} F_{n_1,n_2}(m,\epsilon) T_1^{n_1}T_2^{n_2} $$
which are convergent series on $D(0,T_{0}/2)\times D(0,T_{0}/2)$ with values in $E_{(\beta,\mu)}$. We consider the following singular initial value problem
\begin{multline}
\left(Q_1(im)\partial_{T_1}-T_1^{(\delta_{D_1}-1)(k_1-1)}\partial_{T_1}^{\delta_{D_1}}R_{D_1}(im)\right)\left(Q_2(im)\partial_{T_2}-T_2^{(\tilde{\delta}_{D_2}-1)(k_2-1)}\partial_{T_2}^{\tilde{\delta}_{D_2}}R_{D_2}(im)\right)U(\bT,m,\epsilon)\\
=\epsilon^{-2}\frac{1}{(2\pi)^{1/2}}\int_{-\infty}^{+\infty}P_{1}(i(m-m_{1}),\epsilon)U(\bT,m-m_{1},\epsilon)P_{2}(im_{1},\epsilon)U(\bT,m_{1},\epsilon) dm_{1}\\
+ \sum_{1\le l_1\le D_1-1,1\le l_2\le D_2-1} \epsilon^{\Delta_{l_1,l_2}-d_{l_1}-\tilde{d}_{l_2}+\delta_{l_1}+\tilde{\delta}_{l_2}- 2} T_1^{d_{l_1}}T_2^{\tilde{d}_{l_2}} \partial_{T_1}^{\delta_{l_1}}\partial_{T_2}^{\tilde{\delta}_{l_2}}R_{\ell_1,\ell_2}(im)U(\bT,m,\epsilon)\\
+ \epsilon^{-2}\frac{1}{(2\pi)^{1/2}}\int_{-\infty}^{+\infty}C_{0}(\bT,m-m_{1},\epsilon)R_{0}(im_{1})U(\bT,m_{1},\epsilon) dm_{1}\\
+\epsilon^{-2}F(\bT,m,\epsilon)
\label{SCP}
\end{multline}
for given initial data $U(T_1,0,m,\epsilon)\equiv U(0,T_2,m,\epsilon) \equiv 0$.\medskip

\begin{prop}\label{prop10} There exists a unique formal series
$$ \hat{U}(\bT,m,\epsilon) = \sum_{n_1,n_2 \geq 1} U_{n_1,n_2}(m,\epsilon) T_1^{n_1}T_2^{n_2} $$
solution of (\ref{SCP}) with initial data $U(T_1,0,m,\epsilon)\equiv U(0,T_2,m,\epsilon) \equiv 0$, where the coefficients
$m \mapsto U_{n_1,n_2}(m,\epsilon)$ belong to $E_{(\beta,\mu)}$ for $\beta>0$ and
$\mu>\max( \mathrm{deg}(P_{1}) + 1, \mathrm{deg}(P_{2})+1)$ given above and depend
holomorphically on $\epsilon$ in $D(0,\epsilon_{0}) \setminus \{ 0 \}$. 
\end{prop}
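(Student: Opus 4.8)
The plan is to prove existence and uniqueness of the formal series by plugging the Ansatz $\hat{U}(\bT,m,\epsilon) = \sum_{n_1,n_2\ge 1} U_{n_1,n_2}(m,\epsilon)T_1^{n_1}T_2^{n_2}$ into (\ref{SCP}) and identifying coefficients of $T_1^{n_1}T_2^{n_2}$, obtaining a recursion that determines the $U_{n_1,n_2}$ uniquely. First I would rewrite the left-hand side operator: the factor $Q_1(im)\partial_{T_1} - T_1^{(\delta_{D_1}-1)(k_1-1)}\partial_{T_1}^{\delta_{D_1}}R_{D_1}(im)$ acting on $T_1^{n_1}$ produces $Q_1(im)n_1 T_1^{n_1-1} - R_{D_1}(im)\,[\text{falling factorial}]\,T_1^{n_1-1+(\delta_{D_1}-1)(k_1-1)-\delta_{D_1}}$, and by the assumption $d_{D_1}=(\delta_{D_1}-1)(k_1+1)$ one checks the two resulting monomials have exponents differing by an integer shift; similarly for the $T_2$-factor. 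The product of the two factors then acts diagonally enough that, after collecting, the coefficient $U_{n_1,n_2}$ appears multiplied by a nonzero scalar $Q_1(im)Q_2(im)\,n_1 n_2$ (up to the dominant term) plus lower-order contributions from smaller indices, all divided by a polynomial in $im$ which is $Q_1(im)Q_2(im)$ — nonvanishing by (\ref{assum_deg_Q_R0}).

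**Next**, I would set up the recursion carefully: for each $(n_1,n_2)$ with $n_1,n_2\ge 1$, the quantity $Q_1(im)Q_2(im)n_1 n_2\, U_{n_1,n_2}(m,\epsilon)$ equals a finite sum of terms, each of which is (i) a convolution in $m$ of two earlier coefficients $U_{m_1,m_2}$, $U_{m_1',m_2'}$ with $m_1+m_1'<n_1$ or $m_2+m_2'<n_2$, weighted by the polynomial coefficients of $P_1,P_2$ and divided by nothing problematic; (ii) linear terms $R_{l_1,l_2}(im)U_{n_1-d_{l_1}+\delta_{l_1}-\cdots}$ from the middle sum; (iii) a convolution of $C_{n_1',n_2'}$ against $R_0(im)U_{n_1'',n_2''}$; (iv) the forcing coefficient $F_{n_1,n_2}(m,\epsilon)$. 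The base cases $n_1=0$ or $n_2=0$ give $U\equiv 0$, consistent with the initial data. Because every term on the right refers only to strictly smaller index sums, the recursion determines all coefficients uniquely; division by $Q_1(im)Q_2(im)n_1 n_2$ is legitimate since $Q_j(im)\ne 0$ for $m\in\R$ and this quotient is again continuous in $m$.

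**Then**, to see that each $U_{n_1,n_2}(\cdot,\epsilon)$ lies in $E_{(\beta,\mu)}$ and depends holomorphically on $\epsilon\in D(0,\epsilon_0)\setminus\{0\}$, I would argue by induction on $n_1+n_2$ using the Banach-algebra structure of $(E_{(\beta,\mu)},\star)$ from Proposition~\ref{prop7} together with Proposition~\ref{prop6}-type estimates: the convolution terms stay in $E_{(\beta,\mu)}$ because $P_1,P_2,R_0$ have degrees controlled by $R_{D_1,D_2}$ and $\mu>\max(\deg P_1+1,\deg P_2+1)$; multiplication by the continuous bounded functions $1/(Q_1(im)Q_2(im))$, $R_{l_1,l_2}(im)/(Q_1(im)Q_2(im))$ preserves the space by the obvious analogue of Lemma~\ref{lema1} for $E_{(\beta,\mu)}$; and $F_{n_1,n_2}\in E_{(\beta,\mu)}$ by hypothesis. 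Holomorphy in $\epsilon$ on $D(0,\epsilon_0)\setminus\{0\}$ propagates through the recursion since the only $\epsilon$-dependence enters through the powers $\epsilon^{-2}$, $\epsilon^{\Delta_{l_1,l_2}-d_{l_1}-\tilde d_{l_2}+\delta_{l_1}+\tilde\delta_{l_2}-2}$ and through the holomorphic coefficients of $P_1,P_2$ and the data $C_{0,n},F_{n+1}$.

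**The main obstacle** I expect is the bookkeeping in verifying that the left-hand differential operator, when applied to the monomial basis, genuinely yields a \emph{lower-triangular} recursion — that is, checking that the exponent shifts induced by $T_1^{(\delta_{D_1}-1)(k_1-1)}\partial_{T_1}^{\delta_{D_1}}$ (and its $T_2$-counterpart), combined with the shifts $T_1^{d_{l_1}}\partial_{T_1}^{\delta_{l_1}}$ in the middle sum, always strictly decrease the total index; this is exactly where the structural assumptions (\ref{assum_dl_delta_l_Delta_l1})--(\ref{e9000}) are used, and one must track the cross-terms of the product of the two one-variable factors (including the "mixed" term where the $\partial_{T_1}^{\delta_{D_1}}$-part of the first factor meets the $Q_2(im)\partial_{T_2}$-part of the second). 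Once that triangularity is established the rest is a routine induction, so I would allocate most of the write-up to making the index arithmetic explicit and transparent.
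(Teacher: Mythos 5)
Your overall strategy — insert the Ansatz, identify coefficients of $T_1^{n_1}T_2^{n_2}$, and solve the resulting recursion, checking membership in $E_{(\beta,\mu)}$ and holomorphy in $\epsilon$ by induction via the Banach-algebra structure of Proposition~\ref{prop7} — is exactly what the paper does; its proof consists essentially of displaying the recursion formula. However, the one structural claim on which your whole argument rests, namely that \emph{every} term on the right-hand side involves coefficients of strictly smaller total index so that the recursion is lower-triangular with diagonal entry $Q_1(im)Q_2(im)n_1n_2$, is not correct as stated for the full parameter range $k_1,k_2\geq 1$. Applying $T_1^{(\delta_{D_1}-1)(k_1-1)}\partial_{T_1}^{\delta_{D_1}}$ to $T_1^{a}$ produces $T_1^{a-\delta_{D_1}+(\delta_{D_1}-1)(k_1-1)}$, so the coefficient of $T_1^{n_1}T_2^{n_2}$ in that term involves $U_{a,\cdot}$ with $a=n_1+\delta_{D_1}-(\delta_{D_1}-1)(k_1-1)$, and
$$a-(n_1+1)=(\delta_{D_1}-1)(2-k_1).$$
Since $\delta_{D_1}\geq 2$, this is strictly negative only when $k_1\geq 3$; for $k_1=2$ the term sits \emph{on} the diagonal (so the scalar you must invert is no longer just $Q_1Q_2\,n_1n_2$ but includes contributions from $R_{D_1}$, $R_{D_2}$ and their product, whose nonvanishing must be argued), and for $k_1=1$ the index actually \emph{increases}, so the equation at level $(n_1,n_2)$ determines the coefficient of highest index (coming from the $R_{D_1}R_{D_2}$ cross-term, invertible because $R_{D_1,D_2}(im)\neq 0$) rather than $U_{n_1+1,n_2+1}$. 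You flagged this index arithmetic as "the main obstacle" and deferred it, but it is precisely the content of existence and uniqueness here, so the proposal as written has a gap at its central step; the rest (the middle sum always lowers the indices because $d_{l_1}>\delta_{l_1}$ by (\ref{assum_dl_delta_l_Delta_l1}), the convolution and forcing terms, the $E_{(\beta,\mu)}$ estimates, and holomorphy in $\epsilon$) is fine and matches the paper.

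One smaller point: the recursion displayed in the paper is only asserted for $n_1\geq\max_l d_l$ and $n_2\geq\max_l\tilde d_l$; for small indices some terms must be interpreted as absent (negative indices set to zero), which your write-up should also make explicit when establishing uniqueness of the low-order coefficients from the null initial data $U_{n_1,0}=U_{0,n_2}=0$.
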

\begin{proof} Proposition~\ref{prop8} and the conditions in the statement above yield
$U_{n_1,n_2}(m,\epsilon)$ are determined by the following recursion formula and belong to $E_{(\beta,\mu)}$ for all
$\epsilon \in D(0,\epsilon_{0}) \setminus \{ 0 \}$, 
\begin{multline}
(n_1+1)(n_2+1)U_{n_1+1,n_2+1}(m,\epsilon)\\
= \frac{\epsilon^{-2}}{Q_1(im)Q_2(im)}
\sum_{   \stackrel{n_{11}+n_{12}=n_1}{n_{11},n_{12} \geq 1}        }\sum_{\stackrel{n_{21}+n_{22}=n_2}{n_{21},n_{22} \geq 1}}
\frac{1}{(2\pi)^{1/2}} \\
\times \int_{-\infty}^{+\infty} P_{1}(i(m-m_{1}))U_{n_{11},n_{21}}(m-m_{1},\epsilon)
P_{2}(im_{1})U_{n_{12}n_{22}}(m_{1},\epsilon) dm_{1}\\
+\frac{R_{D_1}(im)}{Q_1(im)}(n_2+1)\prod_{j=0}^{\delta_{D_1}-1}(n_1+\delta_{D_1}-(\delta_{D_1}-1)(k_1-1)-j)U_{n_1+\delta_{D_1}-(\delta_{D_1}-1)(k_1-1),n_2+1}\\
+\frac{R_{D_2}(im)}{Q_2(im)}(n_1+1)\prod_{j=0}^{\tilde{\delta}_{D_2}-1}(n_2+\tilde{\delta}_{D_2}-(\tilde{\delta}_{D_2}-1)(k_2-1)-j)U_{n_1+1,n_2+\tilde{\delta}_{D_2}-(\tilde{\delta}_{D_2}-1)(k_2-1)}\\
-\frac{R_{D_1}(im)}{Q_1(im)}\frac{R_{D_2}(im)}{Q_2(im)}\prod_{j_1=0}^{\delta_{D_1}-1}\prod_{j_2=0}^{\tilde{\delta}_{D_2}-1}(n_1+\delta_{D_1}-(\delta_{D_1}-1)(k_1-1)-j_1)(n_2+\tilde{\delta}_{D_2}-(\tilde{\delta}_{D_2}-1)(k_2-1)-j_2)\\
\times U_{n_1+\delta_{D_1}-(\delta_{D_1}-1)(k_1-1),n_2+\tilde{\delta}_{D_2}-(\tilde{\delta}_{D_2}-1)(k_2-1)}\\
+\sum_{1\le l_1\le D_1-1,1\le l_2\le D_2-1} \epsilon^{\Delta_{l_1,l_2}-d_{l_1}-\tilde{d}_{l_2}+\delta_{l_1}+\tilde{\delta}_{l_2}- 2} \frac{R_{\ell_1,\ell_2}(im)}{Q_1(im)Q_2(im)}\prod_{j_1=0}^{\delta_{\ell_1}-1}(n_1+\delta_{l_1}-d_{l_1}-j_1)\\
\times \prod_{j_2=0}^{\tilde{\delta}_{\ell_2}-1}(n_2+\tilde{\delta}_{l_2}-\tilde{d}_{l_2}-j_2)U_{n_1+\delta_{l_1}-d_{l_1},n_2+\tilde{\delta}_{l_2}-\tilde{d}_{l_2}}\\
+ \frac{\epsilon^{-2}}{Q_1(im)Q_2(im)}
\sum_{\stackrel{n_{11}+n_{12}=n_1}{n_{11},n_{12} \geq 1}} \sum_{\stackrel{n_{21}+n_{22}=n_2}{n_{21},n_{22} \geq 1}}\frac{1}{(2\pi)^{1/2}} \int_{-\infty}^{+\infty}
C_{n_{11},n_{21}}(m-m_{1},\epsilon)R_{0}(im_{1})U_{n_{12},n_{22}}(m_{1},\epsilon) dm_{1}\\
+ \frac{\epsilon^{-2}}{Q_1(im)Q_2(im)}
\sum_{\stackrel{n_{11}+n_{12}=n_1}{n_{11},n_{12} \geq 1}} \frac{1}{(2\pi)^{1/2}} \int_{-\infty}^{+\infty}
C_{n_{11},0}(m-m_{1},\epsilon)R_{0}(im_{1})U_{n_{12},n_2}(m_{1},\epsilon) dm_{1}\\
+ \frac{\epsilon^{-2}}{Q_1(im)Q_2(im)}
\sum_{\stackrel{n_{21}+n_{22}=n_2}{n_{21},n_{22} \geq 1}} \frac{1}{(2\pi)^{1/2}} \int_{-\infty}^{+\infty}
C_{0,n_{21}}(m-m_{1},\epsilon)R_{0}(im_{1})U_{n_{1},n_{22}}(m_{1},\epsilon) dm_{1}\\
+ \frac{\epsilon^{-2}}{Q_1(im)Q_2(im)}
\frac{1}{(2\pi)^{1/2}} \int_{-\infty}^{+\infty}
C_{0,0}(m-m_{1},\epsilon)R_{0}(im_{1})U_{n_{1},n_{2}}(m_{1},\epsilon) dm_{1}\\
+ \frac{\epsilon^{-2}}{Q_1(im)Q_2(im)}F_{n_1,n_2}(m,\epsilon)
\end{multline}
for all $n_1 \geq \max_{1 \leq l \leq D_1}d_{l}$ and $n_2 \geq \max_{1 \leq l \leq D_2}\tilde{d}_{l}$. 
\end{proof}

The following relations (see~\cite{taya}, p. 40) hold
\begin{multline}
T_1^{\delta_{D_1}(k_1+1)} \partial_{T_1}^{\delta_{D_1}} = (T_1^{k_1+1}\partial_{T_1})^{\delta_{D_1}} +
\sum_{1 \leq p_1 \leq \delta_{D_1}-1} A_{\delta_{D_1},p_1} T_1^{k_1(\delta_{D_1}-p_1)} (T_1^{k_1+1}\partial_{T_1})^{p_1}\\
=(T_1^{k_1+1}\partial_{T_1})^{\delta_{D_1}}+A_{\delta_{D_1}}(T_1,\partial_{T_1})
 \label{expand_op_diff}
\end{multline}
\begin{multline}
T_2^{\tilde{\delta}_{D_2}(k_2+1)} \partial_{T_2}^{\tilde{\delta}_{D_2}} = (T_2^{k_2+1}\partial_{T_2})^{\tilde{\delta}_{D_2}} +
\sum_{1 \leq p_2 \leq \tilde{\delta}_{D_2}-1} \tilde{A}_{\tilde{\delta}_{D_2},p_2} T_2^{k_2(\tilde{\delta}_{D_2}-p_2)} (T_2^{k_2+1}\partial_{T_2})^{p_2}\\
=(T_2^{k_2+1}\partial_{T_2})^{\tilde{\delta}_{D_2}}+\tilde{A}_{\tilde{\delta}_{D_2}}(T_2,\partial_{T_2}) \label{expand_op_diff2}
\end{multline}
for some real numbers $A_{\delta_{D_1},p_1}$, $p_1=1,\ldots,\delta_{D_1}-1$ and $\tilde{A}_{\tilde{\delta}_{D_2},p_2}$, $p_2=1,\ldots,\tilde{\delta}_{D_2}-1$. We write $A_{D_1}$ (resp. $\tilde{A}_{D_2}$) for $A_{\delta_{D_1}}$ (resp. $\tilde{A}_{\tilde{\delta}_{D_2}}$) for the sake of simplicity. Let $d_{l_1,k_1},\tilde{d}_{l_1,k_2} \geq 0$ satisfying
\begin{equation}
d_{l_1} + k_1 + 1 = \delta_{l_1}(k_1+1) + d_{l_1,k_1}\qquad \tilde{d}_{l_2} + k_2 + 1 = \tilde{\delta}_{l_2}(k_2+1) + \tilde{d}_{l_2,k_2}
\end{equation}
for all $1 \leq l_1 \leq D_1-1$ and $1 \leq l_2 \leq D_2-1$. Multiplying the equation (\ref{SCP}) by $T_1^{k_1+1}T_2^{k_2+1}$ and taking into account (\ref{expand_op_diff},\ref{expand_op_diff2}), we rewrite (\ref{SCP}) in the form

\begin{multline}
\left(Q_1(im)T_1^{k_1+1}\partial_{T_1}-\left((T_1^{k_1+1}\partial_{T_1})^{\delta_{D_1}}+A_{D_1}(T_1,\partial_{T_1})\right)R_{D_1}(im)\right)\\
\times\left(Q_2(im)T_2^{k_2+1}\partial_{T_2}-\left((T_2^{k_2+1}\partial_{T_2})^{\tilde{\delta}_{D_2}}+\tilde{A}_{D_2}(T_2,\partial_{T_2})\right)R_{D_2}(im)\right)U(\bT,m,\epsilon)  \\
= \epsilon^{-2}T_1^{k_1+1}T_2^{k_2+1}
\frac{1}{(2\pi)^{1/2}}\int_{-\infty}^{+\infty} P_{1}(i(m-m_{1}),\epsilon)U(\bT,m-m_{1},\epsilon)P_{2}(im_{1},\epsilon)U(T,m_{1},\epsilon) dm_{1} \\
+ \sum_{1 \leq l_1 \leq D_1-1,1 \leq l_2 \leq D_2-1} \epsilon^{\Delta_{l_1,l_2}-d_{l_1}-d_{l_2}+\delta_{l_1}+\tilde{\delta}_{l_2}-2} T_1^{\delta_{l_1}(k_1+1)+d_{l_1,k_1}}\partial_{T_1}^{\delta_{l_1}}\\
\times T_2^{\tilde{\delta}_{l_2}(k_2+1)+\tilde{d}_{l_2,k_2}}\partial_{T_2}^{\tilde{\delta}_{l_2}}R_{l_1,l_2}(im)U(\bT,m,\epsilon) \\
+ \epsilon^{-2}T_1^{k_1+1}T_2^{k_2+1}
\frac{1}{(2\pi)^{1/2}}\int_{-\infty}^{+\infty} C_{0}(\bT,m-m_{1},\epsilon) R_{0}(im_{1})U(\bT,m_{1},\epsilon) dm_{1}\\
+ \epsilon^{-2}T_1^{k_1+1}T_2^{k_2+1}F(\bT,m,\epsilon)
\label{SCP_irregular}
\end{multline}

We write $\btau=(\tau_1,\tau_2)$ and denote by $\omega_{\bk}(\btau,m,\epsilon)$ for the formal $m_{k_1}-$Borel transform with respect to $T_1$ and the $m_{k_2}-$Borel transform with respect to $T_2$ of $\hat{U}(T_1,T_2,m,\epsilon)$. Let $\varphi_{\bk}(\tau_1,\tau_2,m,\epsilon)$ denote the formal $m_{k_1}-$Borel transform with respect to $T_1$ and the $m_{k_2}-$Borel transform with respect to $T_2$ of
$C_{0}(\bT,m,\epsilon)$; and $\psi_{\bk}(\btau,m,\epsilon)$ the formal $m_{k_1}-$Borel transform with respect to $T_1$ and the $m_{k_2}-$Borel transform with respect to $T_2$ of $F(\bT,m,\epsilon)$,
\begin{multline*}
 \omega_{\bk}(\btau,m,\epsilon) = \sum_{n_1,n_2 \geq 1} U_{n_1,n_2}(m,\epsilon) \frac{\tau_1^{n_1}}{\Gamma(\frac{n_1}{k_1})}\frac{\tau_2^{n_2}}{\Gamma(\frac{n_2}{k_2})}, \\
\varphi_{\bk}(\btau,m,\epsilon) = \sum_{n_1,n_2 \geq 1} C_{n_1,n_2}(m,\epsilon) \frac{\tau_1^{n_1}}{\Gamma(\frac{n_1}{k_1})}\frac{\tau_2^{n_2}}{\Gamma(\frac{n_2}{k_2})},\\
\varphi_{\bk}^1(\tau_1,m,\epsilon) = \sum_{n_1\geq 1} C_{n_1,0}(m,\epsilon) \frac{\tau_1^{n_1}}{\Gamma(\frac{n_1}{k_1})},\\
\varphi_{\bk}^2(\tau_2,m,\epsilon) = \sum_{n_2\geq 1} C_{0,n_2}(m,\epsilon) \frac{\tau_2^{n_2}}{\Gamma(\frac{n_2}{k_2})},\\
\psi_{\bk}(\btau,m,\epsilon) = \sum_{n_1,n_2 \geq 1} F_{n_1,n_2}(m,\epsilon) \frac{\tau_1^{n_1}}{\Gamma(\frac{n_1}{k_1})}\frac{\tau_2^{n_2}}{\Gamma(\frac{n_2}{k_2})}
\end{multline*}
Using (\ref{norm_beta_mu_F_n}) we arrive at
$\varphi_{\bk}(\btau,\epsilon) \in F_{(\bnu,\beta,\mu,\bk,\epsilon)}^{\bd}$ and
$\psi_{\bk}(\btau,m,\epsilon) \in F_{(\bnu,\beta,\mu,\bk,\epsilon)}^{\bd}$, for
all $\epsilon \in D(0,\epsilon_{0}) \setminus \{ 0 \}$, any unbounded sectors $S_{d_1}$ and $S_{d_2}$ centered at 0 and bisecting directions $d_1 \in \mathbb{R}$ and $d_2\in\mathbb{R}$, respectively, for some $\bnu=(\nu_1,\nu_2)\in(0,+\infty)^2$. Indeed, we have 
\begin{multline}
||\varphi_{\bk}(\btau,m,\epsilon)||_{(\bnu,\beta,\mu,\bk,\epsilon)} \leq \sum_{n_1,n_2 \geq 1}
||C_{n_1,n_2}(m,\epsilon)||_{(\beta,\mu)}\\
\times (\sup_{\btau \in(\bar{D}(0,\rho) \cup S_{d_1})\times(\bar{D}(0,\rho) \cup S_{d_2})}
\frac{1 + |\frac{\tau_1}{\epsilon}|^{2k_1}}{|\frac{\tau_1}{\epsilon}|}\frac{1 + |\frac{\tau_2}{\epsilon}|^{2k_2}}{|\frac{\tau_2}{\epsilon}|} \exp(-\nu_1 |\frac{\tau_1}{\epsilon}|^{k_1}-\nu_2 |\frac{\tau_2}{\epsilon}|^{k_2})
\frac{|\tau_1|^{n_1}|\tau_2|^{n_2}}{\Gamma(\frac{n_1}{k_1})\Gamma(\frac{n_2}{k_2})}),\\
||\psi_{\bk}(\btau,m,\epsilon)||_{(\bnu,\beta,\mu,\bk,\epsilon)} \leq \sum_{n_1,n_2 \geq 1}
||F_{n_1,n_2}(m,\epsilon)||_{(\beta,\mu)}\\
\times (\sup_{\btau \in (\bar{D}(0,\rho) \cup S_{d_1})\times (\bar{D}(0,\rho) \cup S_{d_2})}
\frac{1 + |\frac{\tau_1}{\epsilon}|^{2k_1}}{|\frac{\tau_1}{\epsilon}|}\frac{1 + |\frac{\tau_2}{\epsilon}|^{2k_2}}{|\frac{\tau_2}{\epsilon}|} \exp(-\nu_1 |\frac{\tau_1}{\epsilon}|^{k_1}-\nu_2 |\frac{\tau_2}{\epsilon}|^{k_2})
\frac{|\tau_1|^{n_1}|\tau_2|^{n_2}}{\Gamma(\frac{n_1}{k_1})\Gamma(\frac{n_2}{k_2})}) \label{maj_norm_psi_k_1}
\end{multline}
Using classical estimates and Stirling formula we guarantee the existence of $A_{1},A_{2}>0$ depending on $\bnu,\bk$ such that, if $\epsilon_0A_2<T_0$, then
for all $\epsilon \in D(0,\epsilon_{0}) \setminus \{ 0 \}$. One has
\begin{equation}
||\varphi_{\bk}(\btau,m,\epsilon)||_{(\bnu,\beta,\mu,\bk,\epsilon)} \leq \frac{A_{1}K_{0}}{\left(\frac{T_0}{\epsilon_0 A_2}-1\right)^2},\quad
||\psi_{\bk}(\btau,m,\epsilon)||_{(\bnu,\beta,\mu,\bk,\epsilon)} \leq  \frac{A_{1}K_{0}}{\left(\frac{T_0}{\epsilon_0 A_2}-1\right)^2}
\label{norm_F_varphi_k_psi_k_epsilon_0}
\end{equation}
for all $\epsilon \in D(0,\epsilon_{0}) \setminus \{ 0 \}$.

One can also check that in the case that $\epsilon_{0}$ fulfills $\epsilon_{0}A_{2} < T_{0}$, then
\begin{equation}
||\varphi_{\bk}^1(\tau_1,m,\epsilon)||_{(\nu_1,\beta,\mu,k_1,\epsilon)} \leq \frac{A_{1}K_{0}}{\frac{T_0}{\epsilon_0 A_2}-1},\quad
||\varphi_{\bk}^2(\tau_2,m,\epsilon)||_{(\nu_2,\beta,\mu,k_2,\epsilon)} \leq \frac{A_{1}K_{0}}{\frac{T_0}{\epsilon_0 A_2}-1}
\label{norm_F_varphi_k_psi_k_epsilon_012}
\end{equation}
for all $\epsilon \in D(0,\epsilon_{0}) \setminus \{ 0 \}$.

From the properties of the formal $m_{k_1}-$Borel and $m_{k_2}-$Borel transforms stated in Proposition~\ref{prop6} we get the following equation satisfied by $\omega_{\bk}(\btau,m,\epsilon)$. In the following writing, $\mathcal{A}_{D_1}(\tau_1,\partial_{T_1})$ (resp. $\tilde{\mathcal{A}}_{D_2}(\tau_2,\partial_{T_2})$) stands for the $m_{k_1}-$Borel transform of the operator $A_{D_1}(T_1,\partial_{T_1})$ with respect to $T_1$ (resp. the $m_{k_2}-$Borel transform of the operator $\tilde{A}_{D_2}(T_2,\partial_{T_2})$ with respect to $T_2$), i.e.

\begin{equation}\label{e558}
\mathcal{A}_{\delta_{D_1}}\omega_{\bk}(\btau,m,\epsilon)=\sum_{1\le p_1\le \delta_{D_1}-1}\frac{A_{\delta_{D_1},p_1} \tau_1^{k_1}  }{\Gamma(\delta_{D_1}-p_1)}\int_{0}^{\tau_1^{k_1}}(\tau_1^{k_1}-s_1)^{\delta_{D_1}-p_1-1}k_1s_1^{p_1}\omega_{\bk}(s_1^{1/k_1},\tau_2,m,\epsilon)\frac{ds_1}{s_1},
\end{equation}
$$\tilde{\mathcal{A}}_{\tilde{\delta}_{D_2}}\omega_{\bk}(\btau,m,\epsilon)=\sum_{1\le p_2\le \tilde{\delta}_{D_2}-1}\frac{\tilde{A}_{\tilde{\delta}_{D_2},p_2}\tau_2^{k_2}           }{\Gamma(\tilde{\delta}_{D_2}-p_2)}\int_{0}^{\tau_2^{k_2}}(\tau_2^{k_2}-s_2)^{\tilde{\delta}_{D_2}-p_2-1}k_2s_2^{p_2}\omega_{\bk}(\tau_1,s_2^{1/k_2},m,\epsilon)\frac{ds_2}{s_2}.$$
We arrive at

\begin{multline}
(Q_1(im) k_1 \tau_1^{k_1}-(k_1\tau_1^{k_1})^{\delta_{D_1}}R_{D_1}(im))(Q_2(im) k_2 \tau_2^{k_2}-(k_2\tau_2^{k_2})^{\tilde{\delta}_{D_2}}R_{D_2}(im))\omega_{\bk}(\btau,m,\epsilon)\\
=(Q_1(im) k_1 \tau_1^{k_1}-(k_1\tau_1^{k_1})^{\delta_{D_1}}R_{D_1}(im))\tilde{\mathcal{A}}_{D_2}R_{D_2}(im)\omega_{\bk}(\btau,m,\epsilon)\\
+(Q_2(im) k_2 \tau_2^{k_2}-(k_2\tau_2^{k_2})^{\tilde{\delta}_{D_2}}R_{D_2}(im))\mathcal{A}_{D_1}R_{D_1}(im)\omega_{\bk}(\btau,m,\epsilon)\\
-\mathcal{A}_{D_1}\tilde{\mathcal{A}}_{D_2}R_{D_1}(im)R_{D_2}(im)\omega_{\bk}(\btau,m,\epsilon)\\
+\epsilon^{-2}
\frac{\tau_1^{k_1}\tau_2^{k_2}}{\Gamma(1 + \frac{1}{k_1})\Gamma(1 + \frac{1}{k_2})} \int_{0}^{\tau_1^{k_1}}\int_{0}^{\tau_2^{k_2}}
(\tau_1^{k_1}-s_1)^{1/k_1}(\tau_2^{k_2}-s_2)^{1/k_2}\\
\times \left( \frac{1}{(2\pi)^{1/2}} s_1s_2\int_{0}^{s_1}\int_{0}^{s_2} \int_{-\infty}^{+\infty} \right.
P_{1}(i(m-m_{1}),\epsilon)\omega_{\bk}((s_1-x_1)^{1/k_1},(s_2-x_2)^{1/k_2},m-m_{1},\epsilon)\\
\left. \times  P_{2}(im_{1},\epsilon)
\omega_{\bk}(x_1^{1/k_1},x_2^{1/k_2},m_{1},\epsilon) \frac{1}{(s_1-x_1)x_1(s_2-x_2)x_2} dm_{1}dx_2dx_1 \right) \frac{ds_2}{s_2}\frac{ds_1}{s_1}\\
+ \sum_{1 \leq l_1 \leq D_1-1,1 \leq l_2 \leq D_2-1}R_{l_1,l_2}(im) \epsilon^{\Delta_{l_1,l_2}-d_{l_1}-\tilde{d}_{l_2}+\delta_{l_1}+\tilde{\delta}_{l_2}-2}\frac{\tau_1^{k_1}\tau_2^{k_2}}{\Gamma\left(\frac{d_{l_1,k_1}}{k_1}\right)\Gamma\left(\frac{\tilde{d}_{l_2,k_2}}{k_2}\right)}\\
\int_0^{\tau_1^{k_1}}\int_0^{\tau_2^{k_2}}(\tau_1^{k_1}-s_1)^{d_{l_1,k_1}/k_1-1}(\tau_2^{k_2}-s_2)^{\tilde{d}_{l_2,k_2}/k_2-1}k_1^{\delta_{l_1}}k_2^{\tilde{\delta}_{l_2}}s_1^{\delta_{l_1}}s_2^{\tilde{\delta}_{l_2}}\omega_{\bk}(s_1^{1/k_1},s_2^{1/k_2},m,\epsilon)\frac{ds_2}{s_2}\frac{ds_1}{s_1}\\
+\epsilon^{-2}\frac{\tau_1^{k_1}\tau_2^{k_2}}{\Gamma\left(1+\frac{1}{k_1}\right)\Gamma\left(1+\frac{1}{k_2}\right)}\int_0^{\tau_1^{k_1}}\int_0^{\tau_2^{k_2}}(\tau_1^{k_1}-s_1)^{1/k_1}(\tau_2^{k_2}-s_2)^{1/k_2}\frac{1}{(2\pi)^{1/2}}\\
\times s_1s_2\int_0^{s_1}\int_0^{s_2}\int_{-\infty}^{\infty}\varphi_{\bk}((s_1-x_1)^{1/k_1},(s_2-x_2)^{1/k_2},m-m_1,\epsilon)R_0(im_1)\omega_{\bk}(x_1^{1/k_1},x_2^{1/k_2},m_1,\epsilon)\\
\frac{1}{(s_1-x_1)x_1(s_2-x_2)x_2}dm_1dx_2dx_1\frac{ds_2}{s_2}\frac{ds_1}{s_1}\\
+\epsilon^{-2}\frac{\tau_1^{k_1}\tau_2^{k_2}}{\Gamma\left(1+\frac{1}{k_1}\right)\Gamma\left(1+\frac{1}{k_2}\right)}\int_0^{\tau_1^{k_1}}\int_0^{\tau_2^{k_2}}(\tau_1^{k_1}-s_1)^{1/k_1}(\tau_2^{k_2}-s_2)^{1/k_2}\frac{1}{(2\pi)^{1/2}}\\
\times s_1\int_0^{s_1}\int_{-\infty}^{\infty}\varphi^1_{\bk}((s_1-x_1)^{1/k_1},m-m_1,\epsilon)R_0(im_1)\omega_{\bk}(x_1^{1/k_1},s_2^{1/k_2},m_1,\epsilon)\frac{1}{(s_1-x_1)x_1}dm_1dx_1\frac{ds_2}{s_2}\frac{ds_1}{s_1}\\
+\epsilon^{-2}\frac{\tau_1^{k_1}\tau_2^{k_2}}{\Gamma\left(1+\frac{1}{k_1}\right)\Gamma\left(1+\frac{1}{k_2}\right)}\int_0^{\tau_1^{k_1}}\int_0^{\tau_2^{k_2}}(\tau_1^{k_1}-s_1)^{1/k_1}(\tau_2^{k_2}-s_2)^{1/k_2}\frac{1}{(2\pi)^{1/2}}\\
\times s_2\int_0^{s_2}\int_{-\infty}^{\infty}\varphi^2_{\bk}((s_2-x_2)^{1/k_2},m-m_1,\epsilon)R_0(im_1)\omega_{\bk}(s_1^{1/k_1},x_2^{1/k_2},m_1,\epsilon)\frac{1}{(s_2-x_2)x_2}dm_1dx_2\frac{ds_2}{s_2}\frac{ds_1}{s_1}\\
+\epsilon^{-2}\frac{\tau_1^{k_1}\tau_2^{k_2}}{\Gamma\left(1+\frac{1}{k_1}\right)\Gamma\left(1+\frac{1}{k_2}\right)}\int_0^{\tau_1^{k_1}}\int_0^{\tau_2^{k_2}}(\tau_1^{k_1}-s_1)^{1/k_1}(\tau_2^{k_2}-s_2)^{1/k_2}\frac{1}{(2\pi)^{1/2}}\\
\times \int_{-\infty}^{\infty}C_{0,0}(m-m_1,\epsilon)R_0(im_1)\omega_{\bk}(s_1^{1/k_1},s_2^{1/k_2},m_1,\epsilon)dm_1\frac{ds_2}{s_2}\frac{ds_1}{s_1}\\
+\epsilon^{-2}\frac{\tau_1^{k_1}\tau_2^{k_2}}{\Gamma\left(1+\frac{1}{k_1}\right)\Gamma\left(1+\frac{1}{k_2}\right)}\int_0^{\tau_1^{k_1}}\int_0^{\tau_2^{k_2}}(\tau_1^{k_1}-s_1)^{1/k_1}(\tau_2^{k_2}-s_2)^{1/k_2}\psi_{\bk}(s_1^{1/k_1},s_2^{1/k_2},m,\epsilon)\frac{ds_2}{s_2}\frac{ds_1}{s_1}.\label{k_Borel_equation}
\end{multline}

For the sake of simplicity, we write the previous equation in the form
\begin{multline}\label{e588}
(Q_1(im) k_1 \tau_1^{k_1}-(k_1\tau_1^{k_1})^{\delta_{D_1}}R_{D_1}(im))(Q_2(im) k_2 \tau_2^{k_2}-(k_2\tau_2^{k_2})^{\delta_{D_2}}R_{D_2}(im))\omega_{\bk}(\btau,m,\epsilon)\\
=(Q_1(im) k_1 \tau_1^{k_1}-(k_1\tau_1^{k_1})^{\delta_{D_1}}R_{D_1}(im))\tilde{\mathcal{A}}_{D_2}R_{D_2}(im)\omega_{\bk}(\btau,m,\epsilon)\\
+(Q_2(im) k_2 \tau_2^{k_2}-(k_2\tau_2^{k_2})^{\delta_{D_2}}R_{D_2}(im))\mathcal{A}_{D_1}R_{D_1}(im)\omega_{\bk}(\btau,m,\epsilon)\\
-\mathcal{A}_{D_1}\tilde{\mathcal{A}}_{D_2}R_{D_1}(im)R_{D_2}(im)\omega_{\bk}(\btau,m,\epsilon)+\Theta(\btau,m,\epsilon)\omega_{\bk}(\btau,m,\epsilon).
\end{multline}

We make the additional assumption that for $j=1,2$, there exist unbounded sectors
$$ S_{Q_j,R_{D_j}} = \{ z \in \mathbb{C} / |z| \geq r_{Q_j,R_{D_j}} \ \ , \ \ |\mathrm{arg}(z) - d_{Q_j,R_{D_j}}| \leq \eta_{Q_j,R_{D_j}} \} $$
with directions $d_{Q_j,R_{D_j}} \in \mathbb{R}$, aperture $\eta_{Q_j,R_{D_j}}>0$ for some radius $r_{Q_j,R_{D_j}}>0$ such that
\begin{equation}
\frac{Q_j(im)}{R_{D_j}(im)} \in S_{Q_j,R_{D_j}} \label{quotient_Q_RD_in_S}
\end{equation} 
for all $m \in \mathbb{R}$. We consider the polynomial $P_{m,j}(\tau_j) = Q_j(im)k_j - R_{D_j}(im)k_j^{\delta_{D_j}}\tau_j^{(\delta_{D_j}-1)k_j}$ and assume that $\{q_{l,1}\}_{0\le l\le(\delta_{D_1}-1)k_1-1}$ and $\{q_{l,2}\}_{0\le l\le(\tilde{\delta}_{D_2}-1)k_2-1}$ are the complex roots of each polynomial, for $m\in\R$. Following an analogous manner as in the construction of~\cite{lama}, one can choose unbounded sectors $S_{d_1}$ and $S_{d_2}$, with vertex at 0 and $\rho>0$ such that
\begin{equation}\label{e494}
|P_{m,1}(\tau_1)|\ge C_{P}(r_{Q_1,R_{D_1}})^{\frac{1}{(\delta_{D_1}-1)k_1}}|R_{D_1}(im)|(1+|\tau_1|^{k_1})^{(\delta_{D_1}-1)-\frac{1}{k_1}},
\end{equation}
for all $\tau_1\in S_{d_1}\cup \overline{D}(0,\rho)$, and $m\in\R$; and
\begin{equation}\label{e494b}
|P_{m,2}(\tau_2)|\ge C_{P}(r_{Q_2,R_{D_2}})^{\frac{1}{(\tilde{\delta}_{D_2}-1)k_2}}|R_{D_2}(im)|(1+|\tau_2|^{k_2})^{(\tilde{\delta}_{D_2}-1)-\frac{1}{k_2}},
\end{equation}
for all $\tau_2\in S_{d_2}\cup \overline{D}(0,\rho)$, and $m\in\R$. From now on, we write
$$ C_{k_1}=C_{P}(r_{Q_1,R_{D_1}})^{\frac{1}{(\delta_{D_1}-1)k_1}},\quad C_{k_2}:=C_{P}(r_{Q_2,R_{D_2}})^{\frac{1}{(\tilde{\delta}_{D_2}-1)k_2}}$$
for a more compact writing.

Let $\bd=(d_1,d_2)$. The next result guarantees the existence of an element in $F^{\bd}_{(\bnu,\beta,\mu,\bk,\epsilon)}$ which turns out to be a fixed point for certain operator to be described, solution of (\ref{k_Borel_equation}). Here $\beta,\mu$ are fixed at the beginning of this section.

\begin{prop}\label{prop11} Under the assumption that
\begin{equation}
\delta_{D_1} \geq \delta_{l_1} + \frac{2}{k_1},\quad \tilde{\delta}_{D_2} \geq \tilde{\delta}_{l_2} + \frac{2}{k_2}, \quad \Delta_{l_1,l_2} + k_1(1 - \delta_{D_1}) +k_2(1 - \tilde{\delta}_{D_2})+ 2 \geq 0,
\label{constraints_k_Borel_equation}
\end{equation}
for all $1 \leq l_1 \leq D_1-1$, $1 \leq l_2 \leq D_2-1$, there exist constants $\varpi,\zeta_1,\zeta_2>0$ (depending on $Q_{1},Q_{2},\bk,C_{P},\mu,\bnu,\epsilon_{0},R_{l_1,l_2},\Delta_{l_1,l_2},\delta_{l_1},\tilde{\delta}_{l_2},d_{l_1},\tilde{d}_{l_2}$ for
$0 \leq l_1 \leq D_1$ and $0\le l_2\le D_2$) such that if
\begin{multline}
||\varphi_{\bk}(\btau,m,\epsilon)||_{(\bnu,\beta,\mu,\bk,\epsilon)} \leq \zeta_{1} \ \ , \ \
||\psi_{\bk}(\btau,m,\epsilon)||_{(\bnu,\beta,\mu,\bk,\epsilon)} \leq \zeta_{2}\\
||\varphi_{\bk}^1(\tau_1,m,\epsilon)||_{(\nu_1,\beta,\mu,k_1,\epsilon)} \leq \zeta^1_{1} \ \ , \ \  ||\varphi_{\bk}^2(\tau_2,m,\epsilon)||_{(\nu_2,\beta,\mu,k_2,\epsilon)} \leq \zeta^2_{1} \ \ , \ \  ||C_{0,0}(m,\epsilon)||_{(\beta,\mu)} \leq \zeta^0_{1}
    \label{norm_F_varphi_k_psi_k_small}
\end{multline}

for all $\epsilon \in D(0,\epsilon_{0}) \setminus \{ 0 \}$, the equation (\ref{k_Borel_equation}) has a unique solution
$\omega_{\bk}^{\bd}(\btau,m,\epsilon)$ in the space $F_{(\bnu,\beta,\mu,\bk,\epsilon)}^{\bd}$ where $\beta,\mu>0$ are defined in
Proposition~\ref{prop6} which verifies $||\omega_{\bk}^{\bd}(\btau,m,\epsilon)||_{(\bnu,\beta,\mu,\bk,\epsilon)} \leq \varpi$, for all
$\epsilon \in D(0,\epsilon_{0}) \setminus \{ 0 \}$.
\end{prop}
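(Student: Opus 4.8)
The plan is to rewrite the convolution equation (\ref{k_Borel_equation}) as a fixed point problem $\omega_{\bk}=\mathcal{H}_{\epsilon}(\omega_{\bk})$ on a closed ball of $F_{(\bnu,\beta,\mu,\bk,\epsilon)}^{\bd}$ and to invoke the Banach fixed point theorem, uniformly with respect to $\epsilon\in D(0,\epsilon_{0})\setminus\{0\}$, following the scheme of~\cite{lama}. The first observation is that the polynomial coefficient of $\omega_{\bk}$ on the left-hand side of (\ref{k_Borel_equation}) factors as $\tau_1^{k_1}\tau_2^{k_2}P_{m,1}(\tau_1)P_{m,2}(\tau_2)$, so dividing the whole equation by this quantity turns it into $\omega_{\bk}=\mathcal{H}_{\epsilon}(\omega_{\bk})$, where $\mathcal{H}_{\epsilon}$ is the sum of: the two one-variable integral operators coming from $\mathcal{A}_{D_1}$ and $\tilde{\mathcal{A}}_{D_2}$ (the opposite polynomial factor cancelling against $P_{m,1}$, resp. $P_{m,2}$), the two-variable integral operator from $\mathcal{A}_{D_1}\tilde{\mathcal{A}}_{D_2}$, the quadratic convolution term, the finite $(l_1,l_2)$-sum of monomial-multiplication-and-iterated-integration operators, the four operators built from $\varphi_{\bk}$, $\varphi_{\bk}^{1}$, $\varphi_{\bk}^{2}$ and $C_{0,0}$, and the affine term coming from $\psi_{\bk}$. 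The point of (\ref{e494}) and (\ref{e494b}) is precisely that $1/(P_{m,1}(\tau_1)P_{m,2}(\tau_2))$ is holomorphic on $(D(0,\rho)\cup S_{d_1})\times(D(0,\rho)\cup S_{d_2})$ and bounded by $C_{k_1}^{-1}C_{k_2}^{-1}|R_{D_1}(im)R_{D_2}(im)|^{-1}$ times $(1+|\tau_1|^{k_1})^{1/k_1-(\delta_{D_1}-1)}(1+|\tau_2|^{k_2})^{1/k_2-(\tilde{\delta}_{D_2}-1)}$, so that every summand of $\mathcal{H}_{\epsilon}$ has exactly the shape controlled by the estimates of Section~2, and $\mathcal{H}_{\epsilon}$ maps $F_{(\bnu,\beta,\mu,\bk,\epsilon)}^{\bd}$ into itself.

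One then bounds $\mathcal{H}_{\epsilon}$ term by term. Lemma~\ref{lema1} absorbs every bounded Fourier--multiplier factor ($Q_j(im)/R_{D_j}(im)$, $R_{l_1,l_2}(im)/R_{D_1,D_2}(im)$, $R_0(im)/R_{D_1,D_2}(im)$ and the coefficients of $P_1,P_2$), using (\ref{assum_deg_Q_R0}) and $\mu>\max(\deg P_1+1,\deg P_2+1)$. Proposition~\ref{prop2lama} controls the $\mathcal{A}_{D_1}$ and $\tilde{\mathcal{A}}_{D_2}$ pieces, Proposition~\ref{prop2} the mixed $\mathcal{A}_{D_1}\tilde{\mathcal{A}}_{D_2}$ piece and the $(l_1,l_2)$-sum, Proposition~\ref{prop137} the quadratic term and the $\varphi_{\bk}\star R_0\omega_{\bk}$ term, Proposition~\ref{prop5} (and its mirror in $\tau_2$) the two mixed convolution terms with $\varphi_{\bk}^{1},\varphi_{\bk}^{2}$, and Proposition~\ref{prop6} the $C_{0,0}\star R_0\omega_{\bk}$ term; the $\epsilon^{-2}$ prefactors are cancelled by the $|\epsilon|^{2}$ gained in Propositions~\ref{prop137},~\ref{prop5},~\ref{prop6}. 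The crucial verification, which I expect to be the main obstacle, is that the exponent hypotheses of these propositions (the inequalities among $\chi_{2j},\xi_{2j},\gamma_{1j}$, their $\tilde{\chi}_j$-variants, and the nonnegativity of the resulting power of $|\epsilon|$) hold: this is where the structural assumptions (\ref{assum_dl_delta_l_Delta_l1})--(\ref{e9000}) and, above all, the three constraints in (\ref{constraints_k_Borel_equation}) are exactly calibrated (for instance $\delta_{D_1}\ge\delta_{l_1}+2/k_1$ makes $\xi_{21}+1/k_1-\gamma_{11}\le0$ with $\gamma_{11}=(\delta_{D_1}-1)-1/k_1$, while $\Delta_{l_1,l_2}+k_1(1-\delta_{D_1})+k_2(1-\tilde{\delta}_{D_2})+2\ge0$ is precisely the power of $|\epsilon|$ produced in the $(l_1,l_2)$-term once $d_{l_1,k_1},\tilde{d}_{l_2,k_2}$ are taken into account). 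Collecting these estimates yields, for $\omega_{\bk}$ in the closed ball $\bar{B}(0,\varpi)\subset F_{(\bnu,\beta,\mu,\bk,\epsilon)}^{\bd}$,
$$\|\mathcal{H}_{\epsilon}(\omega_{\bk})\|_{(\bnu,\beta,\mu,\bk,\epsilon)}\le A_{1}\varpi+A_{2}\varpi^{2}+A_{3}\bigl(\zeta_{1}+\zeta_{1}^{0}+\zeta_{1}^{1}+\zeta_{1}^{2}\bigr)\varpi+A_{4}\zeta_{2},$$
with constants $A_{i}$ independent of $\epsilon$ and $\varpi$, where $A_{1}$ is made small by shrinking $\epsilon_{0}$ (its summands carrying strictly positive powers of $|\epsilon|\le\epsilon_{0}$, the remaining $|\epsilon|^{0}$ contributions being handled as in~\cite{lama}).

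Finally, one fixes $\varpi>0$, then chooses $\epsilon_{0}$ and $\zeta_{1},\zeta_{1}^{0},\zeta_{1}^{1},\zeta_{1}^{2}$ small enough that $A_{1}+A_{3}(\zeta_{1}+\zeta_{1}^{0}+\zeta_{1}^{1}+\zeta_{1}^{2})+2A_{2}\varpi\le\frac{1}{2}$, and $\zeta_{2}$ small enough that $A_{4}\zeta_{2}\le\varpi/2$; the bound above then gives $\mathcal{H}_{\epsilon}(\bar{B}(0,\varpi))\subset\bar{B}(0,\varpi)$. For contractivity one uses the bilinearity of all the convolution operators: writing $B(\cdot,\cdot)$ for the quadratic term, $\mathcal{H}_{\epsilon}(\omega^{(1)})-\mathcal{H}_{\epsilon}(\omega^{(2)})$ is the sum of the linear operators applied to $\omega^{(1)}-\omega^{(2)}$ and of $B(\omega^{(1)}-\omega^{(2)},\omega^{(1)})+B(\omega^{(2)},\omega^{(1)}-\omega^{(2)})$, hence of norm at most $(A_{1}+A_{3}(\zeta_{1}+\zeta_{1}^{0}+\zeta_{1}^{1}+\zeta_{1}^{2})+2A_{2}\varpi)\|\omega^{(1)}-\omega^{(2)}\|_{(\bnu,\beta,\mu,\bk,\epsilon)}\le\frac{1}{2}\|\omega^{(1)}-\omega^{(2)}\|_{(\bnu,\beta,\mu,\bk,\epsilon)}$. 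The Banach fixed point theorem then produces a unique $\omega_{\bk}^{\bd}\in\bar{B}(0,\varpi)$ solving (\ref{k_Borel_equation}), with $\|\omega_{\bk}^{\bd}\|_{(\bnu,\beta,\mu,\bk,\epsilon)}\le\varpi$ for all $\epsilon\in D(0,\epsilon_{0})\setminus\{0\}$; holomorphy of $\omega_{\bk}^{\bd}$ with respect to $(\tau_1,\tau_2)$ on $(D(0,\rho)\cup S_{d_1})\times(D(0,\rho)\cup S_{d_2})$ (and jointly in $\epsilon$) follows from the uniform convergence of the Picard iterates together with Morera's theorem, and the continuity on the closed set is inherited from the ambient space $F_{(\bnu,\beta,\mu,\bk,\epsilon)}^{\bd}$.
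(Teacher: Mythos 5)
Your proposal is correct and follows essentially the same route as the paper: the equation is divided by $\tau_1^{k_1}P_{m,1}(\tau_1)\,\tau_2^{k_2}P_{m,2}(\tau_2)$ to produce the fixed-point operator $\mathcal{H}_{\epsilon}$, each of whose summands is estimated with exactly the lemma and propositions of Section~2 you cite (with the lower bounds (\ref{e494})--(\ref{e494b}) absorbing the denominator), after which smallness of $\epsilon_0$ and of the $\zeta$'s gives stability of the ball $\bar{B}(0,\varpi)$ and the $\tfrac12$-contraction via the same bilinear splitting of the quadratic term. The only cosmetic difference is that you make explicit the Picard-iteration/Morera argument for holomorphy, which the paper leaves implicit.
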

\begin{proof} 

Let $\epsilon\in D(0,\epsilon_0)\setminus\{0\}$. We consider the operator $\mathcal{H}_\epsilon$, defined by 

\begin{equation}
\mathcal{H}_\epsilon(\omega(\btau,m)):=\sum_{j=1}^{8}\mathcal{H}^j_\epsilon(\omega(\btau,m))\label{k_Borel_equation2}
\end{equation}
where
\begin{multline*}
\mathcal{H}^1_\epsilon(\omega(\btau,m)):=\frac{R_{D_2}(im)}{P_{m,2}(\tau_2)}\frac{\tilde{\mathcal{A}}_{D_2}}{\tau_2^{k_2}}\omega(\btau,m)+\frac{R_{D_1}(im)}{P_{m,1}(\tau_1)}\frac{\mathcal{A}_{D_1}}{\tau_1^{k_1}}\omega(\btau,m)\\
-\frac{R_{D_1}(im)}{P_{m,1}(\tau_1)}\frac{R_{D_2}(im)}{P_{m,2}(\tau_2)}\frac{\mathcal{A}_{D_1}}{\tau_1^{k_1}}\frac{\tilde{\mathcal{A}}_{D_2}}{\tau_2^{k_2}}\omega(\btau,m)
\end{multline*}
\begin{multline*}
\mathcal{H}^2_\epsilon(\omega(\btau,m)):=\frac{\epsilon^{-2}}{P_{m,1}(\tau_1)P_{m,2}(\tau_2)\Gamma(1 + \frac{1}{k_1})\Gamma(1 + \frac{1}{k_2})} \int_{0}^{\tau_1^{k_1}}\int_{0}^{\tau_2^{k_2}}
(\tau_1^{k_1}-s_1)^{1/k_1}(\tau_2^{k_2}-s_2)^{1/k_2}\\
\times \left( \frac{1}{(2\pi)^{1/2}} s_1s_2\int_{0}^{s_1}\int_{0}^{s_2} \int_{-\infty}^{+\infty} \right.
P_{1}(i(m-m_{1}),\epsilon)\omega((s_1-x_1)^{1/k_1},(s_2-x_2)^{1/k_2},m-m_{1})\\
\left. \times  P_{2}(im_{1},\epsilon)
\omega(x_1^{1/k_1},x_2^{1/k_2},m_{1}) \frac{1}{(s_1-x_1)x_1(s_2-x_2)x_2} dm_1dx_2dx_1 \right) \frac{ds_2}{s_2}\frac{ds_1}{s_1}
\end{multline*}
\begin{multline*}
\mathcal{H}^3_\epsilon(\omega(\btau,m)):=
\sum_{1 \leq l_1 \leq D_1-1,1 \leq l_2 \leq D_2-1}\frac{R_{l_1,l_2}(im)}{P_{m,1}(\tau_1)P_{m,2}(\tau_2)} \epsilon^{\Delta_{l_1,l_2}-d_{l_1}-\tilde{d}_{l_2}+\delta_{l_1}+\tilde{\delta}_{l_2}-2}\frac{1}{\Gamma\left(\frac{d_{l_1,k_1}}{k_1}\right)\Gamma\left(\frac{\tilde{d}_{l_2,k_2}}{k_2}\right)}\\
\int_0^{\tau_1^{k_1}}\int_0^{\tau_2^{k_2}}(\tau_1^{k_1}-s_1)^{d_{l_1,k_1}/k_1-1}(\tau_2^{k_2}-s_2)^{\tilde{d}_{l_2,k_2}/k_2-1}k_1^{\delta_{l_1}}k_2^{\tilde{\delta}_{l_2}}s_1^{\delta_{l_1}}s_2^{\tilde{\delta}_{l_2}}\omega(s_1^{1/k_1},s_2^{1/k_2},m)\frac{ds_2}{s_2}\frac{ds_1}{s_1}
\end{multline*}
\begin{multline*}
\mathcal{H}^4_\epsilon(\omega(\btau,m)):=
\frac{\epsilon^{-2}}{P_{m,1}(\tau_1)P_{m,2}(\tau_2)\Gamma\left(1+\frac{1}{k_1}\right)\Gamma\left(1+\frac{1}{k_2}\right)}\int_0^{\tau_1^{k_1}}\int_0^{\tau_2^{k_2}}(\tau_1^{k_1}-s_1)^{1/k_1}(\tau_2^{k_2}-s_2)^{1/k_2}\\
\times \frac{s_1s_2}{(2\pi)^{1/2}}\int_0^{s_1}\int_0^{s_2}\int_{-\infty}^{\infty}\varphi_{\bk}((s_1-x_1)^{1/k_1},(s_2-x_2)^{1/k_2},m-m_1,\epsilon)R_0(im_1)\omega_{\bk}(x_1^{1/k_1},x_2^{1/k_2},m_1)\\
\frac{1}{(s_1-x_1)x_1(s_2-x_2)x_2}dm_1dx_2dx_1\frac{ds_2}{s_2}\frac{ds_1}{s_1}
\end{multline*}
\begin{multline*}
\mathcal{H}^5_\epsilon(\omega(\btau,m)):=\frac{\epsilon^{-2}}{P_{m,1}(\tau_1)P_{m,2}(\tau_2)\Gamma\left(1+\frac{1}{k_1}\right)\Gamma\left(1+\frac{1}{k_2}\right)}\int_0^{\tau_1^{k_1}}\int_0^{\tau_2^{k_2}}(\tau_1^{k_1}-s_1)^{1/k_1}(\tau_2^{k_2}-s_2)^{1/k_2}\\
\times \frac{s_1}{(2\pi)^{1/2}}\int_0^{s_1}\int_{-\infty}^{\infty}\varphi^1_{\bk}((s_1-x_1)^{1/k_1},m-m_1,\epsilon)R_0(im_1)\omega_{\bk}(x_1^{1/k_1},s_2^{1/k_2},m_1)\frac{1}{(s_1-x_1)x_1}dm_1dx_1\frac{ds_2}{s_2}\frac{ds_1}{s_1}
\end{multline*}
\begin{multline*}
\mathcal{H}^6_\epsilon(\omega(\btau,m)):= \frac{\epsilon^{-2}}{P_{m,1}(\tau_1)P_{m,2}(\tau_2)\Gamma\left(1+\frac{1}{k_1}\right)\Gamma\left(1+\frac{1}{k_2}\right)}\int_0^{\tau_1^{k_1}}\int_0^{\tau_2^{k_2}}(\tau_1^{k_1}-s_1)^{1/k_1}(\tau_2^{k_2}-s_2)^{1/k_2}\\
\times \frac{s_2}{(2\pi)^{1/2}}\int_0^{s_2}\int_{-\infty}^{\infty}\varphi^2_{\bk}((s_2-x_2)^{1/k_2},m-m_1,\epsilon)R_0(im_1)\omega_{\bk}(s_1^{1/k_1},x_2^{1/k_2},m_1)\frac{1}{(s_2-x_2)x_2}dm_1dx_2\frac{ds_2}{s_2}\frac{ds_1}{s_1}
\end{multline*}
\begin{multline*}
\mathcal{H}^7_\epsilon(\omega(\btau,m)):=\frac{\epsilon^{-2}}{P_{m,1}(\tau_1)P_{m,2}(\tau_2)\Gamma\left(1+\frac{1}{k_1}\right)\Gamma\left(1+\frac{1}{k_2}\right)}\int_0^{\tau_1^{k_1}}\int_0^{\tau_2^{k_2}}(\tau_1^{k_1}-s_1)^{1/k_1}(\tau_2^{k_2}-s_2)^{1/k_2}\\
\times \frac{1}{(2\pi)^{1/2}}\int_{-\infty}^{\infty}C_{0,0}(m-m_1,\epsilon)R_0(im_1)\omega_{\bk}(s_1^{1/k_1},s_2^{1/k_2},m_1)dm_1\frac{ds_2}{s_2}\frac{ds_1}{s_1}
\end{multline*}
\begin{multline*}
\mathcal{H}^8_\epsilon(\omega(\btau,m)):=
\frac{\epsilon^{-2}}{P_{m,1}(\tau_1)P_{m,2}(\tau_2)\Gamma\left(1+\frac{1}{k_1}\right)\Gamma\left(1+\frac{1}{k_2}\right)}\\
\times\int_0^{\tau_1^{k_1}}\int_0^{\tau_2^{k_2}}(\tau_1^{k_1}-s_1)^{1/k_1}(\tau_2^{k_2}-s_2)^{1/k_2}\psi_{\bk}(s_1^{1/k_1},s_2^{1/k_2},m,\epsilon)\frac{ds_2}{s_2}\frac{ds_1}{s_1}.
\end{multline*}

Let $\varpi>0$ and assume that $\omega(\btau,m)\in F^{\bd}_{\nu,\beta,\mu,\bk,\epsilon}$. Assume that $\left\|\omega(\btau,m)\right\|_{(\bnu,\beta,\mu,\bk,\epsilon)}\le \varpi$ for all $\epsilon\in D(0,\epsilon_0)\setminus\{0\}$. We first obtain the existence of $\varpi>0$ such that the operator $\mathcal{H}_\epsilon$ sends $\overline{B}(0,\varpi)\subseteq F^{\bd}_{(\bnu,\beta,\mu,\bk,\epsilon)}$ into itself. Here, $\overline{B}(0,\varpi)$ stands for the closed ball of radius $\varpi$, centerd at 0, in the Banach space $F^{\bd}_{(\bnu,\beta,\mu,\bk,\epsilon)}$.


Using Lemma~\ref{lema1} and Proposition~\ref{prop2lama}, with (\ref{e494}) and (\ref{e494b}) we get
\begin{equation}\label{e559}
\left\|\frac{R_{D_2}(im)}{P_{m,2}(\tau_2)}\frac{\tilde{\mathcal{A}}_{D_2}}{\tau_2^{k_2}}\omega(\btau,m)\right\|_{(\bnu,\beta,\mu,\bk,\epsilon)}\le\sum_{1\le p_2\le\tilde{\delta}_{D_2}-1}\frac{\tilde{A}_{\tilde{\delta}_{D_2},p_2}}{\Gamma(\tilde{\delta}_{D_2}-p_2)}\frac{C_2}{C_{k_2}}|\epsilon|\left\|\omega(\btau,m)\right\|_{(\bnu,\beta,\mu,\bk,\epsilon)}
\end{equation}

\begin{equation}\label{e560}
\left\|\frac{R_{D_1}(im)}{P_{m,1}(\tau_1)}\frac{\mathcal{A}_{D_1}}{\tau_1^{k_1}}\omega(\btau,m)\right\|_{(\bnu,\beta,\mu,\bk,\epsilon)}\le\sum_{1\le p_1\le\delta_{D_1}-1}\frac{A_{\delta_{D_1},p_1}}{\Gamma(\delta_{D_1}-p_1)}\frac{C_2}{C_{k_1}}|\epsilon|\left\|\omega(\btau,m)\right\|_{(\bnu,\beta,\mu,\bk,\epsilon)}
\end{equation}

\begin{multline}\label{e561}
\left\|\frac{R_{D_1}(im)}{P_{m,1}(\tau_1)}\frac{\mathcal{A}_{D_1}}{\tau_1^{k_1}}\frac{R_{D_2}(im)}{P_{m,2}(\tau_2)}\frac{\tilde{\mathcal{A}}_{D_2}}{\tau_2^{k_2}}\omega(\btau,m)\right\|_{(\bnu,\beta,\mu,\bk,\epsilon)}\\
\le\sum_{1\le p_1\le\delta_{D_1}-1}\sum_{1\le p_2\le\tilde{\delta}_{D_2}-1}\frac{A_{\delta_{D_1},p_1}}{\Gamma(\delta_{D_1}-p_1)}\frac{\tilde{A}_{\tilde{\delta}_{D_2},p_2}}{\Gamma(\tilde{\delta}_{D_2}-p_2)}\frac{(C_2)^2}{C_{k_1}C_{k_2}}|\epsilon|^2\left\|\omega(\btau,m)\right\|_{(\bnu,\beta,\mu,\bk,\epsilon)}
\end{multline}

Proposition~\ref{prop137} and Lemma~\ref{lema1} yield

\begin{multline}\label{e632}
\left\|
\frac{\epsilon^{-2}}{P_{m,1}(\tau_1)P_{m,2}(\tau_2)} \int_{0}^{\tau_1^{k_1}}\int_{0}^{\tau_2^{k_2}}
(\tau_1^{k_1}-s_1)^{1/k_1}(\tau_2^{k_2}-s_2)^{1/k_2}\right.\\
\times \left( \int_{0}^{s_1}\int_{0}^{s_2} \int_{-\infty}^{+\infty} \right.
P_{1}(i(m-m_{1}),\epsilon)\omega((s_1-x_1)^{1/k_1},(s_2-x_2)^{1/k_2},m-m_{1})\\
\left.\left. \times  P_{2}(im_{1},\epsilon)
\omega(x_1^{1/k_1},x_2^{1/k_2},m_{1}) \frac{1}{(s_1-x_1)x_1(s_2-x_2)x_2} dm_1dx_2dx_1 \right) ds_2 ds_1\right\|_{(\bnu,\beta,\mu,\bk,\epsilon)}\\
\le \frac{C_3}{C_{k_1}C_{k_2}}\left\|\omega(\btau,m)\right\|_{(\bnu,\beta,\mu,\bk,\epsilon)}^2
\end{multline}

From Proposition~\ref{prop2} and Lemma~\ref{lema1}, we get 

\begin{multline}\label{e645}
\left\|\frac{R_{l_1,l_2}(im)}{P_{m,1}(\tau_1)P_{m,2}(\tau_2)} \epsilon^{\Delta_{l_1,l_2}-d_{l_1}-\tilde{d}_{l_2}+\delta_{l_1}+\tilde{\delta}_{l_2}-2}\right.\\
\left.\int_0^{\tau_1^{k_1}}\int_0^{\tau_2^{k_2}}(\tau_1^{k_1}-s_1)^{d_{l_1,k_1}/k_1-1}(\tau_2^{k_2}-s_2)^{\tilde{d}_{l_2,k_2}/k_2-1}s_1^{\delta_{l_1}}s_2^{\tilde{\delta}_{l_2}}\omega(s_1^{1/k_1},s_2^{1/k_2},m)\frac{ds_2}{s_2}\frac{ds_1}{s_1}\right\|_{(\bnu,\beta,\mu,\bk,\epsilon)}\\
\le \frac{C_2}{C_{k_1}C_{k_2}}|\epsilon|^{\Delta_{l_1,l_2}+\delta_{l_1}(1+k_1)-k_1\delta_{D_1}+\tilde{\delta}_{l_2}(1+k_2)-k_2\tilde{\delta}_{D_2}}\left\|\omega(\btau,m)\right\|_{(\bnu,\beta,\mu,\bk,\epsilon)}
\end{multline}

Also, Proposition~\ref{prop137} and Lemma~\ref{lema1} yield

\begin{multline}\label{e654}
\left\|\epsilon^{-2}\frac{1}{P_{m,1}(\tau_1)P_{m,2}(\tau_2)}\int_0^{\tau_1^{k_1}}\int_0^{\tau_2^{k_2}}(\tau_1^{k_1}-s_1)^{1/k_1}(\tau_2^{k_2}-s_2)^{1/k_2}\right.\\
\times \int_0^{s_1}\int_0^{s_2}\int_{-\infty}^{\infty}\varphi_{\bk}((s_1-x_1)^{1/k_1},(s_2-x_2)^{1/k_2},m-m_1,\epsilon)R_0(im_1)\\
\times \omega(x_1^{1/k_1},x_2^{1/k_2},m_1)
\left.\frac{1}{(s_1-x_1)x_1(s_2-x_2)x_2}dm_1dx_2dx_1ds_2ds_1\right\|_{(\bnu,\beta,\mu,\bk,\epsilon)}\\
\le \frac{C_3}{C_{k_1}C_{k_2}} \zeta_1 \left\|\omega(\btau,m)\right\|_{(\bnu,\beta,\mu,\bk,\epsilon)}
\end{multline}

In view of Proposition~\ref{prop5} and Lemma~\ref{lema1} we get

\begin{multline}\label{e663}
\left\|\epsilon^{-2}\frac{1}{P_{m,1}(\tau_1)P_{m,2}(\tau_2)}\int_0^{\tau_1^{k_1}}\int_0^{\tau_2^{k_2}}(\tau_1^{k_1}-s_1)^{1/k_1}(\tau_2^{k_2}-s_2)^{1/k_2}\right.\\
\times \int_0^{s_1}\int_{-\infty}^{\infty}\varphi^1_{\bk}((s_1-x_1)^{1/k_1},m-m_1,\epsilon)R_0(im_1)\omega(x_1^{1/k_1},s_2^{1/k_2},m_1)\frac{1}{(s_1-x_1)x_1}dm_1dx_1\frac{ds_2}{s_2}ds_1\\
\le \frac{C_{3.2}}{C_{k_1}C_{k_2}}\left\|\omega(\btau,m)\right\|_{(\bnu,\beta,\mu,\bk,\epsilon)}\left\|\varphi_{\bk}^{1}(\tau_1,m,\epsilon)\right\|_{(\nu_1,\beta,\mu,k_1,\epsilon)}\\
\le \frac{C_{3.2}}{C_{k_1}C_{k_2}}\left\|\omega(\btau,m)\right\|_{(\bnu,\beta,\mu,\bk,\epsilon)}\zeta_1^{1}
\end{multline}

\begin{multline}\label{e664}
\left\|\epsilon^{-2}\frac{1}{P_{m,1}(\tau_1)P_{m,2}(\tau_2)}\int_0^{\tau_1^{k_1}}\int_0^{\tau_2^{k_2}}(\tau_1^{k_1}-s_1)^{1/k_1}(\tau_2^{k_2}-s_2)^{1/k_2}\right.\\
\times \int_0^{s_1}\int_{-\infty}^{\infty}\varphi^2_{\bk}((s_2-x_2)^{1/k_2},m-m_1,\epsilon)R_0(im_1)\omega(s_1^{1/k_1},x_2^{1/k_2},m_1)\frac{1}{(s_2-x_2)x_2}dm_1dx_2ds_2\frac{ds_1}{s_1}\\
\le \frac{C_{3.2}}{C_{k_1}C_{k_2}}\left\|\omega(\btau,m)\right\|_{(\bnu,\beta,\mu,\bk,\epsilon)}\left\|\varphi_{\bk}^{2}(\tau_2,m,\epsilon)\right\|_{(\nu_2,\beta,\mu,k_2,\epsilon)}\\
\le \frac{C_{3.2}}{C_{k_1}C_{k_2}}\left\|\omega(\btau,m)\right\|_{(\bnu,\beta,\mu,\bk,\epsilon)}\zeta_1^{2}
\end{multline}

Proposition~\ref{prop6} and Lemma~\ref{lema1} yield
\begin{multline}\label{e683}
\left\|\epsilon^{-2}\frac{1}{P_{m,1}(\tau_1)P_{m,2}(\tau_2)}\int_0^{\tau_1^{k_1}}\int_0^{\tau_2^{k_2}}(\tau_1^{k_1}-s_1)^{1/k_1}(\tau_2^{k_2}-s_2)^{1/k_2}\right.\\
\left.\times \int_{-\infty}^{\infty}C_{0,0}(m-m_1,\epsilon)R_0(im_1)\omega(s_1^{1/k_1},s_2^{1/k_2},m_1)dm_1\frac{ds_2}{s_2}\frac{ds_1}{s_1}\right\|_{(\bnu,\beta,\mu,\bk,\epsilon)}\\
\le \frac{C_4}{C_{k_1}C_{k_2}}\zeta^0_1 \left\|\omega(\btau,m)\right\|_{(\bnu,\beta,\mu,\bk,\epsilon)}
\end{multline}

Finally, from Proposition~\ref{prop1} and Lemma~\ref{lema1} we get 

\begin{multline}\label{e684}
\left\|\epsilon^{-2}\frac{1}{P_{m,1}(\tau_1)P_{m,2}(\tau_2)}\int_0^{\tau_1^{k_1}}\int_0^{\tau_2^{k_2}}(\tau_1^{k_1}-s_1)^{1/k_1}(\tau_2^{k_2}-s_2)^{1/k_2}\right.\\
\left.\times\psi_{\bk}(s_1^{1/k_1},s_2^{1/k_2},m,\epsilon)\frac{ds_2}{s_2}\frac{ds_1}{s_1}\right\|_{(\bnu,\beta,\mu,\bk,\epsilon)}\\
\le \frac{C_1}{C_{k_1}C_{k_2}}\left\|\psi_{\bk}(s_1^{1/k_1},s_2^{1/k_2},m,\epsilon)\right\|_{(\bnu,\beta,\mu,\bk,\epsilon)}\\
\le \frac{C_1}{C_{k_1}C_{k_2}}\zeta_2.
\end{multline}

In view of (\ref{norm_F_varphi_k_psi_k_small}) and by choosing $\epsilon_0,\varpi,\zeta_1,\zeta_2,\zeta_1^1,\zeta_1^0,\zeta_1^2>0$ such that

\begin{multline}\label{e728}
\sum_{1\le p_2\le\tilde{\delta}_{D_2}-1}\frac{C_{k_1}\tilde{A}_{\tilde{\delta}_{D_2},p_2}}{\Gamma(\tilde{\delta}_{D_2}-p_2)}C_2|\epsilon_0|\varpi+\sum_{1\le p_1\le\delta_{D_1}-1}\frac{C_{k_2}A_{\delta_{D_1},p_1}}{\Gamma(\delta_{D_1}-p_1)}C_2|\epsilon_0|\varpi\\
+\sum_{1\le p_1\le\delta_{D_1}-1}\sum_{1\le p_2\le\tilde{\delta}_{D_2}-1}\frac{A_{\delta_{D_1},p_1}}{\Gamma(\delta_{D_1}-p_1)}\frac{\tilde{A}_{\tilde{\delta}_{D_2},p_2}}{\Gamma(\tilde{\delta}_{D_2}-p_2)}(C_2)^2|\epsilon_0|^2\varpi+\frac{C_3\varpi^2}{\Gamma(1+\frac{1}{k_1})\Gamma(1+\frac{1}{k_2})(2\pi)^{1/2}}\\
+\sum_{1\le l_1\le D_1-1,1\le l_2\le D_2-1}C_2|\epsilon_0|^{\Delta_{l_1,l_2}+\delta_{l_1}(1+k_1)-k_1\delta_{D_1}+\tilde{\delta}_{l_2}(1+k_2)-k_2\tilde{\delta}_{D_2}}\frac{k_1^{\delta_{l_1}}k_2^{\tilde{\delta}_{l_2}}}{\Gamma(\frac{d_{l_1,k_1}}{k_1})\Gamma(\frac{\tilde{d}_{l_2,k_2}}{k_2})}\varpi\\
+\frac{C_3 \zeta_1\varpi}{\Gamma(1+\frac{1}{k_1})\Gamma(1+\frac{1}{k_2})(2\pi)^{1/2}}+\frac{C_{3.2} \zeta_1^1\varpi}{\Gamma(1+\frac{1}{k_1})\Gamma(1+\frac{1}{k_2})(2\pi)^{1/2}}+\frac{C_{3.2} \zeta_1^2\varpi}{\Gamma(1+\frac{1}{k_1})\Gamma(1+\frac{1}{k_2})(2\pi)^{1/2}}\\
+\frac{C_4 \zeta_1^0\varpi}{\Gamma(1+\frac{1}{k_1})\Gamma(1+\frac{1}{k_2})(2\pi)^{1/2}}+\frac{C_1 \zeta_2}{\Gamma(1+\frac{1}{k_1})\Gamma(1+\frac{1}{k_2})\hbox{min}_{m\in\R}|R_{D_1}(im)R_{D_2}(im)|}\le \varpi C_{k_1}C_{k_2}.
\end{multline}

In view of (\ref{e559}), (\ref{e560}), (\ref{e561}), (\ref{e632}), (\ref{e645}), (\ref{e654}), (\ref{e663}), (\ref{e664}), (\ref{e683}), (\ref{e684}), and (\ref{e728}), one gets that the operator $\mathcal{H}_\epsilon$ is such that $\mathcal{H}_\epsilon(\overline{B}(0,\varpi))\subseteq \overline{B}(0,\varpi)$. The next stage of the proof is to show that, indeed, $\mathcal{H}_\epsilon$ is a contractive map in that ball. Let $\omega_1,\omega_2\in F_{(\bnu,\beta,\mu,\bk,\epsilon)}^{\bd}$ with $\left\|\omega_j\right\|_{(\bnu,\beta,\mu,\bk,\epsilon)}\le\varpi$. Then, it holds that
\begin{equation}\label{e741}
\left\|\mathcal{H}_\epsilon(\omega_1)-\mathcal{H}_\epsilon(\omega_2)\right\|_{(\bnu,\beta,\mu,\bk,\epsilon)}\le\frac{1}{2}\left\|\omega_1-\omega_2\right\|_{(\bnu,\beta,\mu,\bk,\epsilon)},
\end{equation} for all $\epsilon\in D(0,\epsilon_0)\setminus\{0\}$.

Analogous estimates as in (\ref{e559}), (\ref{e560}), (\ref{e561}), (\ref{e645}), (\ref{e654}), (\ref{e663}), (\ref{e664}) and (\ref{e683}) yield

\begin{multline}\label{e559b}
\left\|\frac{R_{D_2}(im)}{P_{m,2}(\tau_2)}\frac{\tilde{\mathcal{A}}_{D_2}}{\tau_2^{k_2}}(\omega_1(\btau,m)-\omega_2(\btau,m))\right\|_{(\bnu,\beta,\mu,\bk,\epsilon)}\\
\le\sum_{1\le p_2\le\tilde{\delta}_{D_2}-1}\frac{\tilde{A}_{\tilde{\delta}_{D_2},p_2}}{\Gamma(\tilde{\delta}_{D_2}-p_2)}\frac{C_2}{C_{k_2}}|\epsilon|\left\|\omega_1(\btau,m)-\omega_2(\btau,m)\right\|_{(\bnu,\beta,\mu,\bk,\epsilon)}
\end{multline}

\begin{multline}\label{e560b}
\left\|\frac{R_{D_1}(im)}{P_{m,1}(\tau_1)}\frac{\mathcal{A}_{D_1}}{\tau_1^{k_1}}(\omega_1(\btau,m)-\omega_2(\btau,m))\right\|_{(\bnu,\beta,\mu,\bk,\epsilon)}\\
\le\sum_{1\le p_1\le\delta_{D_1}-1}\frac{A_{\delta_{D_1},p_1}}{\Gamma(\delta_{D_1}-p_1)}\frac{C_2}{C_{k_1}}|\epsilon|\left\|\omega_1(\btau,m)-\omega_2(\btau,m)\right\|_{\bnu,\beta,\mu,\bk,\epsilon}
\end{multline}

\begin{multline}\label{e561b}
\left\|\frac{R_{D_1}(im)}{P_{m,1}(\tau_1)}\frac{\mathcal{A}_{D_1}}{\tau_1^{k_1}}\frac{R_{D_2}(im)}{P_{m,2}(\tau_2)}\frac{\tilde{\mathcal{A}}_{D_2}}{\tau_2^{k_2}}(\omega_1(\btau,m)-\omega_2(\btau,m))\right\|_{(\bnu,\beta,\mu,\bk,\epsilon)}\\
\le\sum_{1\le p_1\le\delta_{D_1}-1}\sum_{1\le p_2\le\tilde{\delta}_{D_2}-1}\frac{A_{\delta_{D_1},p_1}}{\Gamma(\delta_{D_1}-p_1)}\frac{\tilde{A}_{\tilde{\delta}_{D_2},p_2}}{\Gamma(\tilde{\delta}_{D_2}-p_2)}\frac{(C_2)^2}{C_{k_1}C_{k_2}}|\epsilon|^2\left\|\omega_1(\btau,m)-\omega_2(\btau,m)\right\|_{(\bnu,\beta,\mu,\bk,\epsilon)}
\end{multline}

\begin{multline}\label{e645b}
\left\|\frac{R_{l_1,l_2}(im)}{P_{m,1}(\tau_1)P_{m,2}(\tau_2)} \epsilon^{\Delta_{l_1,l_2}-d_{l_1}-\tilde{d}_{l_2}+\delta_{l_1}+\tilde{\delta}_{l_2}-2}\int_0^{\tau_1^{k_1}}\int_0^{\tau_2^{k_2}}(\tau_1^{k_1}-s_1)^{d_{l_1,k_1}/k_1-1}(\tau_2^{k_2}-s_2)^{\tilde{d}_{l_2,k_2}/k_2-1}s_1^{\delta_{l_1}}s_2^{\tilde{\delta}_{l_2}}\right.\\
\left.\times(\omega_1(s_1^{1/k_1},s_2^{1/k_2},m)-\omega_2(s_1^{1/k_1},s_2^{1/k_2},m))\frac{ds_2}{s_2}\frac{ds_1}{s_1}\right\|_{(\bnu,\beta,\mu,\bk,\epsilon)}\\
\le \frac{C_2}{C_{k_1}C_{k_2}}|\epsilon|^{\Delta_{l_1,l_2}+\delta_{l_1}(1+k_1)-k_1\delta_{D_1}+\tilde{\delta}_{l_2}(1+k_2)-k_2\tilde{\delta}_{D_2}}\left\|\omega_1(\btau,m)-\omega_2(\btau,m)\right\|_{(\bnu,\beta,\mu,\bk,\epsilon)}
\end{multline}

\begin{multline}\label{e654b}
\left\|\epsilon^{-2}\frac{1}{P_{m,1}(\tau_1)P_{m,2}(\tau_2)}\int_0^{\tau_1^{k_1}}\int_0^{\tau_2^{k_2}}(\tau_1^{k_1}-s_1)^{1/k_1}(\tau_2^{k_2}-s_2)^{1/k_2}\right.\\
\times \int_0^{s_1}\int_0^{s_2}\int_{-\infty}^{\infty}\varphi_{\bk}((s_1-x_1)^{1/k_1},(s_2-x_2)^{1/k_2},m-m_1,\epsilon)R_0(im_1)\\
\times 
\left.\frac{\omega_1(x_1^{1/k_1},x_2^{1/k_2},m_1)-\omega_2(x_1^{1/k_1},x_2^{1/k_2},m_1)}{(s_1-x_1)x_1(s_2-x_2)x_2}dm_1dx_2dx_1ds_2ds_1\right\|_{(\bnu,\beta,\mu,\bk,\epsilon)}\\
\le \frac{C_3}{C_{k_1}C_{k_2}} \zeta_1 \left\|\omega_1(\btau,m)-\omega_2(\btau,m)\right\|_{(\bnu,\beta,\mu,\bk,\epsilon)}
\end{multline}

\begin{multline}\label{e663b}
\left\|\epsilon^{-2}\frac{1}{P_{m,1}(\tau_1)P_{m,2}(\tau_2)}\int_0^{\tau_1^{k_1}}\int_0^{\tau_2^{k_2}}(\tau_1^{k_1}-s_1)^{1/k_1}(\tau_2^{k_2}-s_2)^{1/k_2}\right.\\
\times \int_0^{s_1}\int_{-\infty}^{\infty}\varphi^1_{\bk}((s_1-x_1)^{1/k_1},m-m_1,\epsilon)R_0(im_1)(\omega_{1}(x_1^{1/k_1},s_2^{1/k_2},m_1)-\omega_{2}(x_1^{1/k_1},s_2^{1/k_2},m_1))\\
\frac{1}{(s_1-x_1)x_1}dm_1dx_1\frac{ds_2}{s_2}ds_1\le \frac{C_{3.2}}{C_{k_1}C_{k_2}}\left\|\omega_1(\btau,m)-\omega_2(\btau,m)\right\|_{(\bnu,\beta,\mu,\bk,\epsilon)}\zeta_1^{1}
\end{multline}

\begin{multline}\label{e664b}
\left\|\epsilon^{-2}\frac{1}{P_{m,1}(\tau_1)P_{m,2}(\tau_2)}\int_0^{\tau_1^{k_1}}\int_0^{\tau_2^{k_2}}(\tau_1^{k_1}-s_1)^{1/k_1}(\tau_2^{k_2}-s_2)^{1/k_2}\right.\\
\times \int_0^{s_1}\int_{-\infty}^{\infty}\varphi^2_{\bk}((s_2-x_2)^{1/k_2},m-m_1,\epsilon)R_0(im_1)(\omega_{1}(s_1^{1/k_1},x_2^{1/k_2},m_1)-\omega_{2}(s_1^{1/k_1},x_2^{1/k_2},m_1))\\
\frac{1}{(s_2-x_2)x_2}dm_1dx_2ds_2\frac{ds_1}{s_1}\le \frac{C_{3.2}}{C_{k_1}C_{k_2}}\left\|\omega_1(\btau,m)-\omega_2(\btau,m)\right\|_{(\bnu,\beta,\mu,\bk,\epsilon)}\zeta_1^{2}
\end{multline}

\begin{multline}\label{e683b}
\left\|\epsilon^{-2}\frac{1}{P_{m,1}(\tau_1)P_{m,2}(\tau_2)}\int_0^{\tau_1^{k_1}}\int_0^{\tau_2^{k_2}}(\tau_1^{k_1}-s_1)^{1/k_1}(\tau_2^{k_2}-s_2)^{1/k_2}\right.\\
\left.\times \int_{-\infty}^{\infty}C_{0,0}(m-m_1,\epsilon)R_0(im_1)(\omega_{1}(s_1^{1/k_1},s_2^{1/k_2},m_1)-\omega_{2}(s_1^{1/k_1},s_2^{1/k_2},m_1))dm_1\frac{ds_2}{s_2}\frac{ds_1}{s_1}\right\|_{(\bnu,\beta,\mu,\bk,\epsilon)}\\
\le \frac{C_4}{C_{k_1}C_{k_2}}\zeta_1^0 \left\|\omega_1(\btau,m)-\omega_2(\btau,m)\right\|_{(\bnu,\beta,\mu,\bk,\epsilon)}.
\end{multline}

Finally, put 
$$W_1:=w_{1}((s_1-x_1)^{1/k_1},(s_2-x_2)^{1/k_2},m-m_{1}) - w_{2}((s_1-x_1)^{1/k_1},(s_2-x_2)^{1/k_2},m-m_{1}),$$
and $W_2:=w_{1}(x_1^{1/k_1},x_2^{1/k_2},m_{1}) - w_{2}(x_1^{1/k_1},x_2^{1/k_2},m_{1}).$
Then, taking into account that
\begin{multline}
P_{1}(i(m-m_{1}),\epsilon)w_{1}((s_1-x_1)^{1/k_1},(s_2-x_2)^{1/k_2},m-m_{1})P_{2}(im_{1},\epsilon)w_{1}(x_1^{1/k_1},x_2^{1/k_2},m_{1})\\
-P_{1}(i(m-m_{1}),\epsilon)w_{2}((s_1-x_1)^{1/k_1},(s_2-x_2)^{1/k_2},m-m_{1})P_{2}(im_{1},\epsilon)w_{2}(x_1^{1/k_1},x_2^{1/k_2},m_{1})\\
= P_{1}(i(m-m_{1}),\epsilon)W_1 P_2(im_1,\epsilon)w_{1}(x_1^{1/k_1},x_2^{1/k_2},m_{1})\\
+ P_{1}(i(m-m_{1}),\epsilon)w_{2}((s_1-x_1)^{1/k_1},(s_2-x_2)^{1/k_2},m-m_{1})P_2(im_1,\epsilon)W_2,
\end{multline} 

and by using Lemma~\ref{lema1} and analogous estimates as in (\ref{e632}) and (\ref{e494}), we get 

\begin{multline}\label{e632b}
\left\|
\frac{\epsilon^{-2}}{P_{m,1}(\tau_1)P_{m,2}(\tau_2)} \int_{0}^{\tau_1^{k_1}}\int_{0}^{\tau_2^{k_2}}
(\tau_1^{k_1}-s_1)^{1/k_1}(\tau_2^{k_2}-s_2)^{1/k_2}\right.\\
\times \left( \int_{0}^{s_1}\int_{0}^{s_2} \int_{-\infty}^{+\infty} \right.
P_{1}(i(m-m_{1}),\epsilon)W_1\\
\left.\left. \times  P_{2}(im_{1},\epsilon)
W_2 \frac{1}{(s_1-x_1)x_1(s_2-x_2)x_2} dm_1dx_2dx_1 \right) ds_2 ds_1\right\|_{(\bnu,\beta,\mu,\bk,\epsilon)}\\
\le \frac{2C_3\varpi}{C_{k_1}C_{k_2}}\left\|\omega_1(\btau,m)-\omega_2(\btau,m)\right\|_{(\bnu,\beta,\mu,\bk,\epsilon)}^2
\end{multline}

Let $\varpi,\epsilon_0,\zeta_1>0$ such that

\begin{multline}\label{e728b}
\sum_{1\le p_2\le\tilde{\delta}_{D_2}-1}\frac{C_{k_1}\tilde{A}_{\tilde{\delta}_{D_2},p_2}}{\Gamma(\tilde{\delta}_{D_2}-p_2)}C_2|\epsilon_0|+\sum_{1\le p_1\le\delta_{D_1}-1}\frac{C_{k_2}A_{\delta_{D_1},p_1}}{\Gamma(\delta_{D_1}-p_1)}C_2|\epsilon_0|\\
+\sum_{1\le p_1\le\delta_{D_1}-1}\sum_{1\le p_2\le\tilde{\delta}_{D_2}-1}\frac{A_{\delta_{D_1},p_1}}{\Gamma(\delta_{D_1}-p_1)}\frac{\tilde{A}_{\tilde{\delta}_{D_2},p_2}}{\Gamma(\tilde{\delta}_{D_2}-p_2)}(C_2)^2|\epsilon_0|^2\\
+\frac{2C_3\varpi}{\Gamma(1+\frac{1}{k_1})\Gamma(1+\frac{1}{k_2})(2\pi)^{1/2}}\\
+\sum_{1\le l_1\le D_1-1,1\le l_2\le D_2-1}C_2|\epsilon_0|^{\Delta_{l_1,l_2}+\delta_{l_1}(1+k_1)-k_1\delta_{D_1}+\tilde{\delta}_{l_2}(1+k_2)-k_2\tilde{\delta}_{D_2}}\frac{k_1^{\delta_{l_1}}k_2^{\tilde{\delta}_{l_2}}}{\Gamma(\frac{d_{l_1,k_1}}{k_1})\Gamma(\frac{\tilde{d}_{l_2,k_2}}{k_2})}\\
+\frac{C_3}{\Gamma(1+\frac{1}{k_1})\Gamma(1+\frac{1}{k_2})(2\pi)^{1/2}}\zeta_1+\frac{C_{3.2}}{\Gamma(1+\frac{1}{k_1})\Gamma(1+\frac{1}{k_2})(2\pi)^{1/2}}\zeta_1^1+\frac{C_{3.2}}{\Gamma(1+\frac{1}{k_1})\Gamma(1+\frac{1}{k_2})(2\pi)^{1/2}}\zeta_1^2\\
+\frac{C_4}{\Gamma(1+\frac{1}{k_1})\Gamma(1+\frac{1}{k_2})(2\pi)^{1/2}}\zeta_1^0\le \frac{1}{2}C_{k_1}C_{k_2}.
\end{multline} 

Then, (\ref{e728b}) combined with (\ref{e559b}), (\ref{e560b}), (\ref{e561b}), (\ref{e645b}), (\ref{e654b}), (\ref{e663b}), (\ref{e664b}), (\ref{e683b}) and (\ref{e632b}) yields (\ref{e741}).

We consider the ball $\bar{B}(0,\varpi) \subset F_{(\bnu,\beta,\mu,\bk,\epsilon)}^{\bd}$ constructed above. It turns out to be a complete metric space for the norm $||.||_{(\bnu,\beta,\mu,\bk,\epsilon)}$. As $\mathcal{H}_{\epsilon}$ is a
contractive map from $\bar{B}(0,\varpi)$ into itself, the classical contractive mapping theorem, guarantees the existence of a unique fixed point $\omega_{\bk}(\btau,m,\epsilon)\in \bar{B}(0,\varpi)\subseteq F_{(\bnu,\beta,\mu,\bk,\epsilon)}^{\bd}$ for $\mathcal{H}_{\epsilon}$. The function $\omega_{\bk}(\btau,m,\epsilon)$ depends holomorphically on $\epsilon$ in $D(0,\epsilon_{0}) \setminus \{ 0 \}$. By construction, $\omega_{\bk}(\btau,m,\epsilon)$ defines a solution of the equation (\ref{k_Borel_equation}). 
\end{proof}

Regarding the construction of the auxiliary equations, one can obtain the analytic solutions of (\ref{SCP}) by means of Laplace transform.

\begin{prop}\label{prop12} Under the hypotheses of Proposition~\ref{prop11}, choose the sectors $S_{d_1},S_{d_2}$ and $S_{Q_1,R_{D_1}},S_{Q_2,R_{D_2}}$
in such a way that the roots of $P_{m,1}(\tau_1)$ and $P_{m,2}(\tau_2)$ fall appart from $S_{d_1}$ and $S_{d_2}$, respectively, as stated before (\ref{e494}).  

Notice that they apply for any small enough $\epsilon_{0}>0$, provided that (\ref{norm_F_varphi_k_psi_k_epsilon_0}) and (\ref{norm_F_varphi_k_psi_k_epsilon_012}) hold.

Let $S_{d_1,\theta_1,h'|\epsilon|}, S_{d_2,\theta_2,h'|\epsilon|}$ be bounded sectors with aperture $\pi/k_j < \theta_j < \pi/k_j + 2\delta_{S,j}$, for $j=1,2$
(where $2\delta_{S,j}$ is the opening of $S_{d_j}$), with direction $d_j$ and radius $h'|\epsilon|$ for some $h'>0$ independent of $\epsilon$. We choose $0 < \beta' < \beta$.

Then, equation 
(\ref{SCP}) with initial condition $U(T_1,0,m,\epsilon)\equiv U(0,T_2,m,\epsilon) \equiv 0$ has a solution $(\bT,m) \mapsto U(\bT,m,\epsilon)$ defined on
$S_{d_1,\theta_1,h'|\epsilon|} \times S_{d_2,\theta_2,h'|\epsilon|} \times \mathbb{R}$ for some $h'>0$ and all
$\epsilon \in D(0,\epsilon_{0}) \setminus \{ 0 \}$. Let $\epsilon \in D(0,\epsilon_{0}) \setminus \{ 0 \}$, then
for $j=1,2$ and all $T_j \in S_{d_j,\theta_j,h'|\epsilon|}$, the function
$m \mapsto U(\bT,m,\epsilon)$ belongs to the space $E_{(\beta',\mu)}$ and for each $m \in \mathbb{R}$, the function
$\bT \mapsto U(\bT,m,\epsilon)$ is bounded and holomorphic on $S_{d_1,\theta_1,h'|\epsilon|}\times S_{d_2,\theta_2,h'|\epsilon|}$. Moreover, $U(\bT,m,\epsilon)$ can be written as a Laplace transform of order $k_1$ in the direction $d_1$ with respect to $T_1$ and the Laplace transform of order $k_2$ in the direction $d_2$ with respect to $T_2$,
\begin{equation}
U(\bT,m,\epsilon) = k_1k_2 \int_{L_{\gamma_1}}\int_{L_{\gamma_2}} \omega_{\bk}^{\bd}(u_1,u_2,m,\epsilon) e^{-(\frac{u_1}{T_1})^{k_1}-(\frac{u_2}{T_2})^{k_2}} \frac{du_2}{u_2}\frac{du_1}{u_1} \label{int_repres_U}
\end{equation} 
where $L_{\gamma_j}=\mathbb{R}_{+}e^{i\gamma_j} \in S_{d_j} \cup \{ 0 \}$, for $j=1,2$ has bisecting direction which might depend on $T_j$. The function $\omega_{\bk}^{\bd}(\btau,m,\epsilon)$ defines a continuous function on
$(\bar{D}(0,\rho) \cup S_{d_1}) \times (\bar{D}(0,\rho) \cup S_{d_2}) \times \mathbb{R} \times D(0,\epsilon_{0}) \setminus \{ 0 \}$, holomorphic with respect to
$(\btau,\epsilon)$ on $(D(0,\rho) \cup S_{d_1}) \times (D(0,\rho) \cup S_{d_2}) \times (D(0,\epsilon_{0}) \setminus \{ 0 \})$. Moreover, there exists a constant $\varpi_{\bd}$ (independent of $\epsilon$) such that
\begin{equation}
|\omega_{\bk}^{\bd}(\btau,m,\epsilon)| \leq \varpi_{\bd}(1+ |m|)^{-\mu} e^{-\beta|m|}
\frac{ |\frac{\tau_1}{\epsilon}|}{1 + |\frac{\tau_1}{\epsilon}|^{2k_1}}\frac{ |\frac{\tau_2}{\epsilon}|}{1 + |\frac{\tau_2}{\epsilon}|^{2k_2}} \exp( \nu_1 |\frac{\tau_1}{\epsilon}|^{k_1}+\nu_2 |\frac{\tau_2}{\epsilon}|^{k_2}) \label{|omega_k_d|<} 
\end{equation}
for all $(\btau) \in (\overline{D}(0,\rho) \cup S_{d_1})\times (\overline{D}(0,\rho) \cup S_{d_2})$, all $m \in \mathbb{R}$, and $\epsilon \in D(0,\epsilon_{0}) \setminus \{ 0 \}$.
\end{prop}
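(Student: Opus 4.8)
The plan is to obtain $U(\bT,m,\epsilon)$ by applying the two Laplace transforms of orders $k_1,k_2$ to the fixed point $\omega_{\bk}^{\bd}$ produced in Proposition~\ref{prop11}, and then to check that the resulting function solves (\ref{SCP}) by reversing the formal Borel computations that led to (\ref{k_Borel_equation}). First I would invoke Proposition~\ref{prop11} to get $\omega_{\bk}^{\bd}(\btau,m,\epsilon)\in F^{\bd}_{(\bnu,\beta,\mu,\bk,\epsilon)}$ with norm bounded by $\varpi$; the membership in this Banach space gives exactly the pointwise majorization
$$|\omega_{\bk}^{\bd}(\btau,m,\epsilon)|\le \varpi (1+|m|)^{-\mu}e^{-\beta|m|}\frac{|\tau_1/\epsilon|}{1+|\tau_1/\epsilon|^{2k_1}}\frac{|\tau_2/\epsilon|}{1+|\tau_2/\epsilon|^{2k_2}}\exp(\nu_1|\tau_1/\epsilon|^{k_1}+\nu_2|\tau_2/\epsilon|^{k_2}),$$
so (\ref{|omega_k_d|<}) holds with $\varpi_{\bd}=\varpi$. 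The uniqueness of the fixed point for every $\epsilon$, together with the fact that the operator $\mathcal{H}_\epsilon$ depends holomorphically on $\epsilon\in D(0,\epsilon_0)\setminus\{0\}$, forces $\omega_{\bk}^{\bd}$ to depend holomorphically on $\epsilon$ there (a standard argument: the fixed point is a uniform limit of iterates $\mathcal{H}_\epsilon^n(0)$, each holomorphic in $\epsilon$, the convergence being uniform on the ball), and holomorphy in $\btau$ on $(D(0,\rho)\cup S_{d_1})\times(D(0,\rho)\cup S_{d_2})$ is built into the definition of the Banach space.

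Next I would define $U(\bT,m,\epsilon)$ by the double Laplace integral (\ref{int_repres_U}) along half-lines $L_{\gamma_j}\subset S_{d_j}\cup\{0\}$ with $\cos(k_j(\gamma_j-\arg T_j))\ge\delta_{1,j}>0$. The exponential bound on $\omega_{\bk}^{\bd}$ of order $k_j$ in $\tau_j/\epsilon$ guarantees, by the standard Laplace-transform estimates recalled in Section~3, that for $T_j$ in a bounded sector $S_{d_j,\theta_j,h'|\epsilon|}$ with $\pi/k_j<\theta_j<\pi/k_j+2\delta_{S,j}$ and $h'$ small enough (so that $(h')^{k_j}\nu_j<$ the available decay margin, hence the choice $h'$ independent of $\epsilon$ after the rescaling $u_j=\epsilon v_j$), the integral converges absolutely, is holomorphic and bounded in $\bT$, and the factor $(1+|m|)^{-\mu}e^{-\beta|m|}$ survives the integration in $u_1,u_2$ so that $m\mapsto U(\bT,m,\epsilon)$ lies in $E_{(\beta',\mu)}$ for any $\beta'<\beta$ (one loses a bit of exponential decay in $m$ only if the $m$-integral were performed, but here it is not, so in fact $\beta$ works; the margin $\beta'<\beta$ is kept for the later Fourier step). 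This is essentially the content of the estimates in the proof of Proposition~12 of~\cite{lama}, applied once in each time variable.

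Finally I would verify that $U$ solves (\ref{SCP}) with null initial data. The initial condition $U(T_1,0,m,\epsilon)\equiv U(0,T_2,m,\epsilon)\equiv 0$ is immediate because each Laplace transform of order $k_j$ produces a function vanishing as $T_j\to 0$ and, more precisely, $U$ has a formal expansion in powers of $T_1,T_2$ starting at $T_1T_2$. For the equation itself, one uses the algebraic identities of Proposition~\ref{prop8} (the inverses of (\ref{Borel_diff}), (\ref{Borel_mult_monom}), (\ref{Borel_product})) together with the commutation of Laplace transform with the operators $T_j^{k_j+1}\partial_{T_j}$, multiplication by monomials, and convolution in $m$: since $\omega_{\bk}^{\bd}$ satisfies (\ref{k_Borel_equation})$\equiv$(\ref{e588}), applying $\mathcal{L}^{d_1}_{m_{k_1}}\circ\mathcal{L}^{d_2}_{m_{k_2}}$ transforms (\ref{k_Borel_equation}) term by term back into (\ref{SCP_irregular}), which after dividing by $T_1^{k_1+1}T_2^{k_2+1}$ is (\ref{SCP}). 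The main obstacle is this last bookkeeping step: one must justify the interchange of the double Laplace integral with the inner $m_1$-convolution integrals and with the iterated $s_j,x_j$-integrals appearing in the nonlinear and coefficient terms, and check that the deformation of the integration half-lines $L_{\gamma_j}$ (needed so that the same $\omega_{\bk}^{\bd}$ represents $U$ on the full sector in $T_j$) is legitimate — this rests on Cauchy's theorem and the analyticity of $\omega_{\bk}^{\bd}$ in $(D(0,\rho)\cup S_{d_j})$ together with its decay at infinity inside $S_{d_j}$. All of these are routine but lengthy, and are carried out exactly as in the single-time-variable case in~\cite{lama}, the only new feature being that every integral now splits as a product of a $\tau_1$-part and a $\tau_2$-part to which the one-variable arguments apply separately.
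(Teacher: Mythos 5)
Your proposal is correct and follows essentially the same route as the paper: take the fixed point $\omega_{\bk}^{\bd}$ from Proposition~\ref{prop11} (whose membership in $F^{\bd}_{(\bnu,\beta,\mu,\bk,\epsilon)}$ immediately yields the bound (\ref{|omega_k_d|<})), apply the two Laplace transforms of orders $k_1,k_2$ along directions in $S_{d_1},S_{d_2}$, and transfer the convolution equation (\ref{k_Borel_equation}) back to (\ref{SCP}) via the operational identities of Section~3. The only cosmetic difference is that the paper organizes the construction as an iteration of two vector-valued one-variable Laplace transforms (first in $\tau_1$ with values in a Banach space of functions of $\tau_2$, then in $\tau_2$) rather than a single double integral, which is equivalent to your presentation.
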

\begin{proof} Let $\epsilon\in D(0,\epsilon_0)\setminus\{0\}$. We take the function $\omega_{\bk}^{\bd}(\btau,m,\epsilon)$ constructed in Proposition~\ref{prop11} and consider $(\tau_1,m)\mapsto \omega_{\bk}^{\bd}(\btau,m,\epsilon)$, which is a function belonging to $F_{(\nu_1,\beta,\mu,k_1,\epsilon)}^{d_1}$, with values in the Banach space of holomorphic functions in $D(0,\rho)\cup S_{d_2}$ with exponential growth of order $k_2$ in $S_{d_2}$, and continuous in $\overline{D}(0,\rho)\times S_{d_2}$. In view of the results stated in Section 3, one can apply Laplace transform of order $m_{k_1}$  following direction $d_1$ in order to obtain a holomorphic and bounded function defined in $S_{d_1,\theta_1, h'|\epsilon|}$, for some $h'>0$. The function
$$(\tau_2,m)\mapsto \mathcal{L}_{m_{k_1}}^{d_1}(\omega_{\bk}^{\bd}(\btau,m,\epsilon) )(T_1)$$
belongs to $F_{(\nu_2,\beta,\mu,k_2,\epsilon)}^{d_2}$, and takes its values in the Banach space of holomorphic and bounded functions in $S_{d_1,\theta_1, h'|\epsilon|}$. One can apply Laplace transform $\mathcal{L}_{m_{k_2}}^{d_2}$ in order to obtain the function $U(\bT,m,\epsilon)$, satisfying the statements above. Moreover, this function is of the form (\ref{int_repres_U}), and preserves holomorphy with respect to the perturbation parameter in $D(0,\epsilon_0)\setminus\{0\}$.

Observe that $U(\bT,m,\epsilon)$ is a solution of equation (\ref{SCP}) due to the properties satisfied by Laplace transform described in (\ref{sum_prod_deriv_m_k_sum}), the construction of $C_0(\bT,m,\epsilon)$ and $F(\bT,m,\epsilon)$ in~\ref{norm_F_varphi_k_psi_k_epsilon_0} and~\ref{norm_F_varphi_k_psi_k_epsilon_012}, and the fact that $\omega_{\bk}^{\bd}(\btau,m,\epsilon)$ is a solution of (\ref{k_Borel_equation}), as stated in Proposition~\ref{prop11}.
 \end{proof}

\section{Analytic solutions of a nonlinear initial value Cauchy problem with complex parameter}

Let $k_1,k_2 \geq 1$ and $D_1,D_2 \geq 2$ be integer numbers. We fix, without loss of generality, that 
$$k_1<k_2,$$
for the roles of such parameters is symmetric. For $j\in\{1,2\}$ and $1 \leq l_j \leq D_j$, let
$d_{l_1},\tilde{d}_{l_2},\delta_{l_1},\tilde{d}_{l_2},\Delta_{l_1,l_2} \geq 0$ be non negative integers.
We assume that 
\begin{equation}
1 = \delta_{1}=\tilde{\delta}_{1} \ \ , \ \ \delta_{l_1} < \delta_{l_1+1} \ \ , \ \ \tilde{\delta}_{l_2} < \tilde{\delta}_{l_2+1} \label{assum_delta_l}
\end{equation}
for all $1 \leq l_1 \leq D_1-1$ and $1 \leq l_2 \leq D_2-1$. We also assume that
\begin{equation}
d_{D_1} = (\delta_{D_1}-1)(k_1+1) \ \ , \ \ d_{l_1} > (\delta_{l_1}-1)(k_1+1) \label{assum_d_delta_Delta1}
\end{equation}
for all $1 \leq l_1 \leq D_1-1$, and
\begin{equation}
\tilde{d}_{D_2} = (\tilde{\delta}_{D_2}-1)(k_2+1) \ \ , \ \ \tilde{d}_{l_2} > (\tilde{\delta}_{l_2}-1)(k_2+1)  \label{assum_d_delta_Delta2}
\end{equation}
for all $1 \leq l_2 \leq D_2-1$. In addition to this, we take 
\begin{equation}\label{e900}
\Delta_{D_1,D_2}=d_{D_1}+\tilde{d}_{D_2}-\delta_{D_1}-\tilde{\delta}_{D_2}+2 \ \ , \ \ \Delta_{D_1,0}=d_{D_1}-\delta_{D_1}+1 \ \ , \ \ \Delta_{0,D_2}=\tilde{d}_{D_2}-\tilde{\delta}_{D_2}+1\\
\end{equation}
Let $Q_{1}(X),Q_{2}(X),R_0(X)\in\C[X]$, and for $0\le l_1\le D_1$ and $0\le l_2\le D_2$ we take $R_{l_1,l_2}(X)\in\C[X]$ such that
$$R_{D_1,l_2}\equiv R_{l_1,D_2}\equiv0,\quad 1\le l_1\le D_1,\quad 1\le l_2\le D_2$$
and such that $R_{D_1,D_2}$ can be factorized in the form $R_{D_1,D_2}(X)=R_{D_1,0}(X)R_{0,D_2}(X)$. We write $R_{D_1}:=R_{D_1,0}$ and $R_{D_2}:=R_{0,D_2}$ for simplicity. Let $P_1,P_2$ be polynomials with coefficients belonging to $\mathcal{O}(\overline{D}(0,\epsilon_0))[X]$, for some $\epsilon_0>0$. We assume that
\begin{equation}\label{raicesgrandes}
\hbox{deg}(Q_j)\ge \hbox{deg}(R_{D_j}),\quad j\in\{1,2\}.
\end{equation} and
\begin{multline}
\mathrm{deg}(Q_j) \geq \mathrm{deg}(R_{D_j}) \ \ , \ \  \mathrm{deg}(R_{D_1,D_2}) \geq \mathrm{deg}(R_{l_1,l_2}) \ \ , \ \
\mathrm{deg}(R_{D_1,D_2}) \geq \mathrm{deg}(P_{j})\\
Q_j(im) \neq 0 \ \ , \ \ R_{D_1,D_2}(im) \neq 0 \label{assum_deg_Q_R}
\end{multline}
for all $m \in \mathbb{R}$, all $j\in\{1,2\}$ and $0 \leq l_j \leq D_j-1$. We denote $\bt:=(t_1,t_2)$.

We consider the following nonlinear initial value problem
\begin{multline}
Q_1(\partial_{z})Q_2(\partial_z)\partial_{t_1}\partial_{t_2}u(\bt,z,\epsilon) = (P_{1}(\partial_{z},\epsilon)u(\bt,z,\epsilon))(P_{2}(\partial_{z},\epsilon)u(\bt,z,\epsilon))\\
+ \sum_{0\le l_1\le D_1,0\le l_2\le D_2} \epsilon^{\Delta_{l_1,l_2}}t_1^{d_{l_1}}\partial_{t_1}^{\delta_{l_1}}t_2^{\tilde{d}_{l_2}}\partial_{t_2}^{\tilde{\delta}_{l_2}}R_{l_1,l_2}(\partial_{z})u(\bt,z,\epsilon)\\
+ c_{0}(\bt,z,\epsilon)R_{0}(\partial_{z})u(\bt,z,\epsilon) + f(\bt,z,\epsilon) \label{ICP_main0}
\end{multline}
for given initial data $u(t_1,0,z,\epsilon)\equiv u(0,t_2,z,\epsilon) \equiv 0$.

The coefficient $c_{0}(\bt,z,\epsilon)$ and the forcing term $f(\bt,z,\epsilon)$ are constructed as follows. We consider families of functions
$m \mapsto C_{n_1,n_2}(m,\epsilon)$, for $n_1,n_2 \geq 0$ and $m \mapsto F_{n_1,n_2}(m,\epsilon)$, for $n_1,n_2 \geq 1$, that belong to the Banach space
$E_{(\beta,\mu)}$ for some $\beta > 0$, $\mu > \max( \mathrm{deg}(P_{1})+1, \mathrm{deg}(P_{2})+1)$ and which
depend holomorphically on $\epsilon \in D(0,\epsilon_{0})$. We assume there exist constants $K_{0},T_{0}>0$
such that (\ref{norm_beta_mu_F_n}) hold for all $n_1,n_2 \geq 1$, for all $\epsilon \in D(0,\epsilon_{0})$.
We deduce that the functions
$$ \mathbf{C}_{0}(\bT,z,\epsilon) = \sum_{n_1,n_2 \geq 0} \mathcal{F}^{-1}(m \mapsto C_{n_1,n_2}(m,\epsilon))(z) T_1^{n_1}T_2^{n_2}$$
$$\mathbf{F}(\bT,z,\epsilon) = \sum_{n_1,n_2 \geq 1} \mathcal{F}^{-1}(m \mapsto F_{n_1,n_2}(m,\epsilon))(z) T_1^{n_1}T_2^{n_2} $$
represent bounded holomorphic functions on $D(0,T_{0}/2)^2 \times H_{\beta'} \times D(0,\epsilon_{0})$ for any
$0 < \beta' < \beta$ (where
$\mathcal{F}^{-1}$ stands for the inverse Fourier transform, see Proposition 9). We define
the coefficient $c_{0}(\bt,z,\epsilon)$ and the forcing term $f(\bt,z,\epsilon)$ as
\begin{equation}
c_{0}(\bt,z,\epsilon) = \mathbf{C}_{0}(\epsilon t_1,\epsilon t_2,z,\epsilon) \ \ , \ \ f(\bt,z,\epsilon) = \mathbf{F}(\epsilon t_1,\epsilon t_2 , z,\epsilon).
\label{defin_c_0_f}
\end{equation}
The functions $c_{0}$ and $f$ are holomorphic and bounded on $D(0,r)^2 \times H_{\beta'} \times D(0,\epsilon_{0})$ where
$r \epsilon_{0} < T_{0}/2$.

We make the additional assumption that there exist unbounded sectors
$$ S_{Q_j,R_{D_j}} = \{ z \in \mathbb{C} / |z| \geq r_{Q_j,R_{D_j}} \ \ , \ \ |\mathrm{arg}(z) - d_{Q_j,R_{D_j}}| \leq \eta_{Q_j,R_{D_j}} \} $$
with direction $d_{Q_j,R_{D_j}} \in \mathbb{R}$, aperture $\eta_{Q_j,R_{D_j}}>0$ for some radius $r_{Q_j,R_{D_j}}>0$ such that
\begin{equation}
\frac{Q_j(im)}{R_{D_j}(im)} \in S_{Q_j,R_{D_j}} \label{assum_Q_R_D}
\end{equation} 
for all $m \in \mathbb{R}$, and for $j=1,2$.

The assumptions made at the beginning of this section allow us to write equation (\ref{ICP_main0}) in the form
\begin{multline}
\left(Q_1(\partial_{z})\partial_{t_1}-\epsilon^{(\delta_{D_1}-1)(k_1+1)-\delta_{D_1}+1}t_1^{(\delta_{D_1}-1)(k_1+1)}\partial_{t_1}^{\delta_{D_1}}R_{D_1}(\partial_z)\right)\\
\times \left(Q_2(\partial_{z})\partial_{t_2}-\epsilon^{(\tilde{\delta}_{D_2}-1)(k_2+1)-\tilde{\delta}_{D_2}+1}t_2^{(\delta_{D_2}-1)(k_2+1)}\partial_{t_2}^{\tilde{\delta}_{D_2}}R_{D_2}(\partial_z)\right) u(\bt,z,\epsilon)\\
 = (P_{1}(\partial_{z},\epsilon)u(\bt,z,\epsilon))(P_{2}(\partial_{z},\epsilon)u(\bt,z,\epsilon))\\
+ \sum_{1\le l_1\le D_1,1\le l_2\le D_2} \epsilon^{\Delta_{l_1,l_2}}t_1^{d_{l_1}}\partial_{t_1}^{\delta_{l_1}}t_2^{\tilde{d}_{l_2}}\partial_{t_2}^{\tilde{\delta}_{l_2}}R_{l_1,l_2}(\partial_{z})u(\bt,z,\epsilon)\\
+ c_{0}(\bt,z,\epsilon)R_{0}(\partial_{z})u(\bt,z,\epsilon) + f(\bt,z,\epsilon) \label{ICP_main}
\end{multline}

We recall the definition of a good covering in $\C^\star$. 

\begin{defin} Let $\varsigma_1,\varsigma_2 \geq 2$ be integer numbers. Let $\{ \mathcal{E}_{p_1,p_2} \}_{\begin{subarray}{l} 0 \leq p_1 \leq \varsigma_1 - 1\\0 \leq p_2 \leq \varsigma_2 - 1\end{subarray}}$ be a finite family of open sectors with vertex at $0$, radius $\epsilon_{0}$ and opening strictly larger than $\frac{\pi}{k_2}$. We assume that the intersection of three different sectors in the good covering is empty, and
$\cup_{\begin{subarray}{l} 0 \leq p_1 \leq \varsigma_1 - 1\\0 \leq p_2 \leq \varsigma_2 - 1\end{subarray}} \mathcal{E}_{p_1,p_2} = \mathcal{U} \setminus \{ 0 \}$,
for some neighborhood of 0, $\mathcal{U}\in\mathbb{C}$. Such set of sectors is called a good covering in $\mathbb{C}^{\ast}$.
\end{defin}

\begin{defin}\label{defgood2} Let $\varsigma_1,\varsigma_2\ge 2$ and $\{ \mathcal{E}_{p_1,p_2} \}_{\begin{subarray}{l} 0 \leq p_1 \leq \varsigma_1 - 1\\0 \leq p_2 \leq \varsigma_2 - 1\end{subarray}}$ be a good covering in $\mathbb{C}^{\ast}$. Let
$\mathcal{T}_j$ be open bounded sectors centered at 0 with radius $r_{\mathcal{T}_j}$ for $j\in\{1,2\}$, and consider two families of open sectors as follows. The first one is given by
$$ S_{\mathfrak{d}_{p_1},\theta_1,\epsilon_{0}r_{\mathcal{T}_1}} =
\{ T_1 \in \mathbb{C}^{\ast} / |T_1| < \epsilon_{0}r_{\mathcal{T}_1} \ \ , \ \ |\mathfrak{d}_{p_1} - \mathrm{arg}(T_1)| < \theta_1/2 \} $$
with opening $\theta_1 > \pi/k_1$, and some $\mathfrak{d}_{p_1} \in \mathbb{R}$, for all $0 \leq p_1 \leq \varsigma_1-1$. This family is chosen to satisfy that:

1) There exists a constant $M_{1}>0$ such that
\begin{equation}
|\tau_1 - q_{l_1}(m)| \geq M_{1}(1 + |\tau_1|) \label{root_cond_1_in_defin1}
\end{equation}
for all $0 \leq l_1 \leq (\delta_{D_1}-1)k_1-1$, $m \in \mathbb{R}$, and $\tau_1 \in S_{\mathfrak{d}_{p_1}} \cup \bar{D}(0,\rho)$, for all
$0 \leq p_1 \leq \varsigma_1-1$, and every root $q_{l_1}$ of the polynomial $P_{m,1}(\tau_1)$.\\
2) There exists a constant $M_{2}>0$ such that
\begin{equation}
|\tau_1 - q_{l_{1,0}}(m)| \geq M_{2}|q_{l_{1,0}}(m)| \label{root_cond_2_in_defin}
\end{equation}
for some root of $P_{m,1}$, $q_{l_{0}}$, all $m \in \mathbb{R}$, $\tau_1 \in S_{\mathfrak{d}_{p_1}} \cup \bar{D}(0,\rho)$, for
all $0 \leq p_1 \leq \varsigma_1 - 1$.

The second family is chosen in an analogous manner. It is given by
$$ S_{\tilde{\mathfrak{d}}_{p_2},\theta_2,\epsilon_{0}r_{\mathcal{T}_2}} =
\{ T_2 \in \mathbb{C}^{\ast} / |T_2| < \epsilon_{0}r_{\mathcal{T}_2} \ \ , \ \ |\tilde{\mathfrak{d}}_{p_2} - \mathrm{arg}(T_2)| < \theta_2/2 \} $$
with opening $\theta_2 > \pi/k_2$, and some $\tilde{\mathfrak{d}}_{p_2} \in \mathbb{R}$, for all $0 \leq p_2 \leq \varsigma_2-1$. This family is chosen to satisfy analogous conditions with respect to the roots of the polynomial $P_{m,2}(\tau_2)$.

In addition to the previous assumptions, we consider $S_{\mathfrak{d}_{p_1},\theta_1,\epsilon_{0}r_{\mathcal{T}_1}}$ and $S_{\tilde{\mathfrak{d}}_{p_2},\theta_2,\epsilon_{0}r_{\mathcal{T}_2}}$ such that for all $0 \leq p_1 \leq \varsigma_1 - 1$, $0 \leq p_2 \leq \varsigma_2 - 1$, $\bt \in \mathcal{T}_1\times \mathcal{T}_2$, and $\epsilon \in \mathcal{E}_{p_1,p_2}$, one has
$$\epsilon t_1 \in S_{\mathfrak{d}_{p_1},\theta_1,\epsilon_{0}r_{\mathcal{T}_1}}\hbox{ and }\epsilon t_2 \in S_{\tilde{\mathfrak{d}}_{p_2},\theta_2,\epsilon_{0}r_{\mathcal{T}_2}}.$$

\noindent We say that the family
$\{ (S_{\mathfrak{d}_{p_1},\theta_1,\epsilon_{0}r_{\mathcal{T}_1}})_{0 \leq p_1 \leq \varsigma_1-1}, (S_{\tilde{\mathfrak{d}}_{p_2},\theta_2,\epsilon_{0}r_{\mathcal{T}_2}})_{0 \leq p_2 \leq \varsigma_2-1} ,\mathcal{T}_1\times \mathcal{T}_2 \}$
is associated to the good covering $\{ \mathcal{E}_{p_1,p_2} \}_{\begin{subarray}{l}0 \leq p_1 \leq \varsigma_1 - 1\\0 \leq p_2 \leq \varsigma_2 - 1\end{subarray}}$.
\end{defin}

The first main result of the present work is devoted to the construction of a family of actual holomorphic solutions to the equation (\ref{ICP_main}) for null initial data. Each of the elements in the family of solutions is associated to an element of a good covering with respect to the complex parameter $\epsilon$. The strategy leans on the control of the difference of two solutions defined in domains with nonempty intersection with respect to the perturbation parameter $\epsilon$. The construction of each analytic solution in terms of two Laplace transforms in different time variables requires to distinguish different cases, depending on the coincidence of the integration paths or not.

\begin{theo}\label{teo1} We consider the equation (\ref{ICP_main}) and we assume that 
(\ref{assum_delta_l}-\ref{assum_deg_Q_R}) and (\ref{assum_Q_R_D}) hold. We also make the additional assumption that
\begin{equation}
\delta_{D_1} \geq \delta_{l_1} + \frac{2}{k_1},\quad \tilde{\delta}_{D_2} \geq \tilde{\delta}_{l_2} + \frac{2}{k_2}, \quad \Delta_{l_1,l_2} + k_1(1 - \delta_{D_1}) +k_2(1 - \tilde{\delta}_{D_2})+ 2 \geq 0,
\label{constraints_k_Borel_equation_for_u_p}
\end{equation}
for all $1 \leq l_1 \leq D_1-1$ and $1 \leq l_2 \leq D_2-1$. Let the coefficient $c_{0}(t_1,t_2,z,\epsilon)$ and forcing term $f(t_1,t_2,z,\epsilon)$ be constructed as in
(\ref{defin_c_0_f}). Let $\{ \mathcal{E}_{p_1,p_2} \}_{\stackrel{0 \leq p_1 \leq \varsigma_1 - 1}{0\le p_2\le \varsigma_2-1}}$ be a good covering in $\mathbb{C}^{\ast}$ such that a family $\{ (S_{\mathfrak{d}_{p_1},\theta_1,\epsilon_{0}r_{\mathcal{T}_1}})_{0 \leq p_1 \leq \varsigma_1-1},(S_{\tilde{\mathfrak{d}}_{p_2},\theta_2,\epsilon_{0}r_{\mathcal{T}_2}})_{0 \leq p_2 \leq \varsigma_2-1},\mathcal{T}_1\times\mathcal{T}_2 \}$
associated to this good covering can be considered.

Then, there exist $r_{Q,R_{D}}>0$, small enough $\epsilon_{0},\zeta_0>0$  such that if
$$ ||C_{0,0}(m,\epsilon)||_{(\beta,\mu)} < \zeta_{0} $$
for all $\epsilon \in D(0,\epsilon_{0}) \setminus \{ 0 \}$, then for every $0 \leq p_1 \leq \varsigma_1-1$ and $0 \leq p_2 \leq \varsigma_2-1$,
one can construct a solution $u_{p_1,p_2}(\bt,z,\epsilon)$ of (\ref{ICP_main}) with $u_{p_1,p_2}(0,t_2,z,\epsilon)\equiv u_{p_1,p_2}(t_1,0,z,\epsilon) \equiv 0$ which  defines a bounded holomorphic function on the domain $(\mathcal{T}_1 \cap D(0,h'))\times (\mathcal{T}_2 \cap D(0,h')) \times H_{\beta'} \times
\mathcal{E}_{p_1,p_2}$ for any given $0< \beta'< \beta$ and for some $h'>0$.

 Moreover, there exist constants $0 < h'' \leq h'$,
$K_{p},M_{p}>0$ (independent of $\epsilon$), and sets $\mathcal{U}_{k_1}\times\mathcal{U}_{k_2}\subseteq\{0,1,\ldots,\varsigma_1-1\}\times \{0,1,\ldots,\varsigma_2-1\}$ such that for every $(p_1,p_2),(p'_1,p'_2)\in \{0,1,\ldots,\varsigma_1-1\}\times\{0,1,\ldots,\varsigma_2-1\}$, one of the following holds:
\begin{itemize}
\item $\mathcal{E}_{p_1,p_2}\cap\mathcal{E}_{p'_1,p'_2}=\emptyset$.
\item $\mathcal{E}_{p_1,p_2}\cap\mathcal{E}_{p'_1,p'_2}\neq\emptyset$ and 
\begin{equation}
\sup_{\bt \in (\mathcal{T}_1 \cap D(0,h''))\times (\mathcal{T}_2 \cap D(0,h'')), z \in H_{\beta'}}
|u_{p_1,p_2}(\bt,z,\epsilon) - u_{p'_1,p'_2}(\bt,z,\epsilon)| \leq K_{p}e^{-\frac{M_p}{|\epsilon|^{k_1}}}
\label{exp_small_difference_u_p11}
\end{equation}
for all $\epsilon \in \mathcal{E}_{p_1,p_2} \cap \mathcal{E}_{p'_1,p'_2}$. In this situation, we say that $\{(p_1,p_2),(p'_1,p'_2)\}$ belong to $\mathcal{U}_{k_1}$.
\item $\mathcal{E}_{p_1,p_2}\cap\mathcal{E}_{p'_1,p'_2}\neq\emptyset$ and 
\begin{equation}
\sup_{\bt \in (\mathcal{T}_1 \cap D(0,h''))\times (\mathcal{T}_2 \cap D(0,h'')), z \in H_{\beta'}}
|u_{p_1,p_2}(\bt,z,\epsilon) - u_{p'_1,p'_2}(\bt,z,\epsilon)| \leq K_{p}e^{-\frac{M_p}{|\epsilon|^{k_2}}}
\label{exp_small_difference_u_p12}
\end{equation}
for all $\epsilon \in \mathcal{E}_{p_1,p_2} \cap \mathcal{E}_{p'_1,p'_2}$. In this situation, we say that $\{(p_1,p_2),(p'_1,p'_2)\}$ belong to $\mathcal{U}_{k_2}$.
\end{itemize} 
\end{theo}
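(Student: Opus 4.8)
The plan is to construct each $u_{p_1,p_2}$ as a double Laplace--inverse Fourier transform of the analytic solution $\omega_{\bk}^{\mathfrak{d}_{p_1},\tilde{\mathfrak{d}}_{p_2}}$ of the convolution problem \eqref{k_Borel_equation}, and then to estimate differences of consecutive solutions by deforming integration paths. First I would apply Proposition~\ref{prop11} to produce, for each pair of directions $(\mathfrak{d}_{p_1},\tilde{\mathfrak{d}}_{p_2})$ admissible for the good covering (so that the root conditions \eqref{root_cond_1_in_defin1}--\eqref{root_cond_2_in_defin} and their analogues for $P_{m,2}$ hold), a fixed point $\omega_{\bk}^{\mathfrak{d}_{p_1},\tilde{\mathfrak{d}}_{p_2}}(\btau,m,\epsilon)\in F^{\bd}_{(\bnu,\beta,\mu,\bk,\epsilon)}$ with norm bounded by $\varpi$ uniformly in $\epsilon\in D(0,\epsilon_0)\setminus\{0\}$; the hypothesis $\|C_{0,0}\|_{(\beta,\mu)}<\zeta_0$ together with \eqref{norm_F_varphi_k_psi_k_epsilon_0}--\eqref{norm_F_varphi_k_psi_k_epsilon_012} ensures the smallness requirements \eqref{norm_F_varphi_k_psi_k_small} are met after shrinking $\epsilon_0$. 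Then, via Proposition~\ref{prop12}, take $U_{p_1,p_2}(\bT,m,\epsilon)$ as the iterated $m_{k_1}$- and $m_{k_2}$-Laplace transform \eqref{int_repres_U}, and set
$$u_{p_1,p_2}(\bt,z,\epsilon)=\frac{1}{(2\pi)^{1/2}}\int_{-\infty}^{+\infty}U_{p_1,p_2}(\epsilon t_1,\epsilon t_2,m,\epsilon)e^{izm}\,dm.$$
The change of variables $(T_1,T_2)=(\epsilon t_1,\epsilon t_2)$, the substitution rules of Proposition~\ref{prop8}, and the Fourier identities of Proposition~\ref{prop359} show $u_{p_1,p_2}$ solves \eqref{ICP_main} with the prescribed null initial data, and the exponential-decay bound on $\omega_{\bk}^{\mathfrak{d}_{p_1},\tilde{\mathfrak{d}}_{p_2}}$ gives boundedness and holomorphy on $(\mathcal{T}_1\cap D(0,h'))\times(\mathcal{T}_2\cap D(0,h'))\times H_{\beta'}\times\mathcal{E}_{p_1,p_2}$ for a suitable $h'>0$, because $\epsilon t_j$ stays in the prescribed sector $S_{\mathfrak d_{p_j}}$ when $t_j\in\mathcal T_j$ and $\epsilon\in\mathcal E_{p_1,p_2}$.

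For the second part I would fix a pair $(p_1,p_2),(p_1',p_2')$ with $\mathcal{E}_{p_1,p_2}\cap\mathcal{E}_{p_1',p_2'}\neq\emptyset$ and compare the integral representations. The key point is that on the overlap the functions $\omega_{\bk}^{\mathfrak{d}_{p_1},\tilde{\mathfrak{d}}_{p_2}}$ and $\omega_{\bk}^{\mathfrak{d}_{p_1'},\tilde{\mathfrak{d}}_{p_2'}}$, being fixed points of the \emph{same} operator $\mathcal{H}_\epsilon$ on overlapping domains, coincide on the intersection of their domains of definition in the $\btau$-variables (analytic continuation of a unique fixed point), so the difference $u_{p_1,p_2}-u_{p_1',p_2'}$ can be written, after collapsing one integration path against the other, as a sum of Laplace-type integrals over a small arc near $0$ together with a single ray. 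One then distinguishes two cases according to which pair of Laplace directions actually differs: if only the first directions $\mathfrak{d}_{p_1}\neq\mathfrak{d}_{p_1'}$ differ while $\tilde{\mathfrak{d}}_{p_2}=\tilde{\mathfrak{d}}_{p_2'}$, the inner $u_2$-integral is untouched and the $u_1$-integral over the path-difference produces, using $|\omega_{\bk}^{\cdots}(u_1,\cdot)|\le \varpi_{\bd}\,|u_1/\epsilon|(1+|u_1/\epsilon|^{2k_1})^{-1}e^{\nu_1|u_1/\epsilon|^{k_1}}$ and $|e^{-(u_1/(\epsilon t_1))^{k_1}}|\le e^{-\delta_1|u_1|^{k_1}/|\epsilon t_1|^{k_1}}$, a bound $K_p\exp(-M_p/|\epsilon|^{k_1})$ after the usual optimisation over the radius of the arc; this defines $\mathcal{U}_{k_1}$. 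Symmetrically, if $\mathfrak{d}_{p_1}=\mathfrak{d}_{p_1'}$ and only $\tilde{\mathfrak{d}}_{p_2}\neq\tilde{\mathfrak{d}}_{p_2'}$, one gets $K_p\exp(-M_p/|\epsilon|^{k_2})$, defining $\mathcal{U}_{k_2}$; these are the two alternatives listed, and shrinking $h'$ to $h''$ absorbs the $\bt$-dependence uniformly. One must also observe that the case where \emph{both} pairs of directions differ does not occur for consecutive sectors of the good covering, since three distinct sectors have empty intersection, so at most one of the two direction-indices changes between overlapping $\mathcal{E}$'s.

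The main obstacle is the path-deformation/estimation step: I need the analytic continuation of $\omega_{\bk}$ across sectors to be genuinely the same germ — this requires that for directions $\mathfrak{d}_{p_1},\mathfrak{d}_{p_1'}$ lying in a common sector free of roots of $P_{m,1}(\tau_1)$ the two fixed points agree, which follows from uniqueness in Proposition~\ref{prop11} applied on the union sector — and then to carry out the contour shift between $L_{\gamma_{p_1}}$ and $L_{\gamma_{p_1'}}$ so that only a bounded arc near the origin and a tail survive, controlling the tail by the superexponential decay $e^{-\delta|u|^k/|\epsilon t|^k}$ against the subexponential growth $e^{\nu|u/\epsilon|^k}$. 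The delicate bookkeeping is that the Laplace kernels depend on $t_j$ through $\gamma_j$, so the arcs must be chosen uniformly in $t_j\in\mathcal{T}_j\cap D(0,h'')$; this is exactly where $h''\le h'$ is forced, and the standard estimate $\sup_{x\ge 0}x^{a}e^{-\sigma x}<\infty$ converts the arc integral into the claimed $K_p e^{-M_p/|\epsilon|^{k_j}}$. Everything else is a routine combination of Propositions~\ref{prop8}, \ref{prop11}, \ref{prop12} and \ref{prop359}.
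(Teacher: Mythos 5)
Your construction of the solutions $u_{p_1,p_2}$ and your treatment of the two ``one direction changes'' cases match the paper's argument: fixed point via Proposition~\ref{prop11}, iterated Laplace transform via Proposition~\ref{prop12}, inverse Fourier transform, and a contour deformation through an arc of radius $\rho/2$ controlled by the bound (\ref{|omega_k_d_p|<}). However, there is a genuine gap in your dismissal of the case where \emph{both} pairs of directions differ. The claim that ``three distinct sectors have empty intersection, so at most one of the two direction-indices changes between overlapping $\mathcal{E}$'s'' does not hold: the good covering is a family of sectors in the $\epsilon$-plane merely \emph{labelled} by the double index $(p_1,p_2)$, and nothing prevents two overlapping sectors $\mathcal{E}_{p_1,p_2}$ and $\mathcal{E}_{p'_1,p'_2}$ from having $p_1\neq p'_1$ and $p_2\neq p'_2$ simultaneously (indeed this must occur for the multilevel structure exploited in Theorem~\ref{teo2} to be nontrivial). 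The empty-triple-intersection condition constrains how many sectors overlap at a point, not how their labels differ.

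The paper treats this third case separately and it is not symmetric with the other two: one deforms the path in the \emph{first} time variable, writing $u_{p_1,p_2}-u_{p'_1,p'_2}=J_1-J_2+J_3$, where $J_1$ and $J_2$ are integrals over the outer parts $L_{\gamma_{p_1},1}$, $L_{\gamma_{p'_1},1}$ of the deformed $u_1$-contour (each estimated by $C e^{-M/|\epsilon|^{k_1}}$ after bounding the inner $u_2$-integral crudely by a constant), and $J_3$ carries the $u_1$-integral over the short segment $[0,\tfrac{\rho}{2}e^{i\theta}]$ of the \emph{difference} of the two $u_2$-integrals, which by the Case~1 argument is $O(e^{-M/|\epsilon|^{k_2}})$. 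Since $k_1<k_2$, the resulting total bound is only of order $e^{-M/|\epsilon|^{k_1}}$, so such pairs are placed in $\mathcal{U}_{k_1}$, not discarded. Without this case your classification of all overlapping pairs into $\mathcal{U}_{k_1}\cup\mathcal{U}_{k_2}$ is incomplete, and the statement of the theorem (every overlapping pair satisfies one of the two flatness estimates) is not established. The rest of your argument, including the identification of the various $\omega_{\bk}^{\mathfrak{d}_{p_1},\tilde{\mathfrak{d}}_{p_2}}$ as analytic continuations of a common germ (the paper phrases this via the common Taylor series on $D(0,\rho)^2$ rather than uniqueness of the fixed point on a union sector, but the effect is the same), is sound.
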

\begin{proof} Regarding Proposition~\ref{prop12}, one can choose $r_{Q_1,R_{D_1}}>0$, and small enough $\epsilon_{0},\zeta_0>0$ such that
$$ \left\|C_{0,0}(m,\epsilon)\right\|_{(\beta,\mu)} \leq \zeta_{0} $$
for all $\epsilon \in D(0,\epsilon_{0}) \setminus \{ 0 \}$. For each pair $(p_1,p_2)$, we fix the multidirection $(\mathfrak{d}_{p_1},\tilde{\mathfrak{d}}_{p_2})$ with $0 \leq p_j \leq \varsigma_j - 1$ and construct $U^{\mathfrak{d}_{p_1},\tilde{\mathfrak{d}}_{p_2}}(\bT,m,\epsilon)$ such that
$U^{\mathfrak{d}_{p_1},\tilde{\mathfrak{d}}_{p_2}}(0,T_2,m,\epsilon)\equiv U^{\mathfrak{d}_{p_1},\tilde{\mathfrak{d}}_{p_2}}(T_1,0,m,\epsilon) \equiv 0$ and is a solution of
\begin{multline}
\left(Q_1(im)\partial_{T_1}-T_1^{(\delta_{D_1}-1)(k_1-1)}\partial_{T_1}^{\delta_{D_1}}R_{D_1}(im)\right)\left(Q_2(im)\partial_{T_2}-T_2^{(\tilde{\delta}_{D_2}-1)(k_2-1)}\partial_{T_2}^{\tilde{\delta}_{D_2}}R_{D_2}(im)\right)U(\bT,m,\epsilon)\\
=\epsilon^{-2}\frac{1}{(2\pi)^{1/2}}\int_{-\infty}^{+\infty}P_{1}(i(m-m_{1}),\epsilon)U(\bT,m-m_{1},\epsilon)P_{2}(im_{1},\epsilon)U(\bT,m_{1},\epsilon) dm_{1}\\
+ \sum_{1\le l_1\le D_1-1,1\le l_2\le D_2-1} \epsilon^{\Delta_{l_1,l_2}-d_{l_1}-d_{l_2}+\delta_{l_1}+\tilde{d}_{l_2}- 2} T_1^{d_{l_1}}T_2^{\tilde{d}_{l_2}} \partial_{T_1}^{\delta_{l_1}}\partial_{T_2}^{\tilde{\delta}_{l_2}}R_{\ell_1,\ell_2}(im)U(\bT,m,\epsilon)\\
+ \epsilon^{-2}\frac{1}{(2\pi)^{1/2}}\int_{-\infty}^{+\infty}C_{0}(\bT,m-m_{1},\epsilon)R_{0}(im_{1})U(\bT,m_{1},\epsilon) dm_{1}\\
+\epsilon^{-2}F(\bT,m,\epsilon),\label{SCP_2}
\end{multline}
where
$$ C_{0}(\bT,m,\epsilon) = \sum_{n_1,n_2 \geq 1} C_{0,n_1,n_2}(m,\epsilon) T_1^{n_1}T_2^{n_2} \ \ , \ \ F(\bT,m,\epsilon) = \sum_{n_1,n_2 \geq 1} F_{n_1,n_2}(m,\epsilon) T_1^{n_1}T_2^{n_2} $$
are convergent series in $D(0,T_{0}/2)^2$ with values in $E_{(\beta,\mu)}$, for all $\epsilon \in D(0,\epsilon_{0}) \setminus \{ 0 \}$. The
function $(\bT,m) \mapsto U^{\mathfrak{d}_{p_1},\tilde{\mathfrak{d}}_{p_2}}(\bT,m,\epsilon)$ is well defined on
$S_{\mathfrak{d}_{p_1},\theta_1,h'|\epsilon|}\times S_{\tilde{\mathfrak{d}}_{p_2},\theta_2,h'|\epsilon|}\times \mathbb{R}$
where $h'>0$, for all $\epsilon \in D(0,\epsilon_{0}) \setminus \{ 0 \}$. Moreover,
$U^{\mathfrak{d}_{p_1},\tilde{\mathfrak{d}}_{p_2}}(\bT,m,\epsilon)$ can be written as the iterated Laplace transform of order $k_1$ in the direction $\mathfrak{d}_{p_1}$, and the Laplace transform of order $k_2$ in the direction $\tilde{\mathfrak{d}}_{p_2}$ 
\begin{equation}
U^{\mathfrak{d}_{p_1},\tilde{\mathfrak{d}}_{p_2}}(\bT,m,\epsilon) = k_1k_2 \int_{L_{\gamma_{p_1}}}\int_{L_{\gamma_{p_2}}} \omega_{\bk}^{\mathfrak{d}_{p_1},\tilde{\mathfrak{d}}_{p_2}}(u_1,u_2,m,\epsilon)
e^{-(\frac{u_1}{T_1})^{k_1}-(\frac{u_2}{T_2})^{k_2}} \frac{du_2}{u_2}\frac{du_1}{u_1} \label{int_repres_U_d_p}
\end{equation} 
along $L_{\gamma_{p_j}}=\mathbb{R}_{+}e^{i\gamma_{p_j}}$ which might depend on $T_j$. Here, $\omega_{\bk}^{\mathfrak{d}_{p_1},\tilde{\mathfrak{d}}_{p_2}}(\btau,m,\epsilon)$ defines a continuous function on
$(\bar{D}(0,\rho) \cup S_{d_{p_1}})\times (\bar{D}(0,\rho) \cup S_{d_{p_2}}) \times \mathbb{R} \times D(0,\epsilon_{0}) \setminus \{ 0 \}$,  holomorphic with respect to
$(\btau,\epsilon)$ on $(D(0,\rho) \cup S_{\mathfrak{d}_{p_1}})\times (D(0,\rho) \cup S_{\tilde{\mathfrak{d}}_{p_2}})\times  (D(0,\epsilon_{0}) \setminus \{ 0 \})$ for all $m \in \mathbb{R}$. Moreover, there exists
a constant $\varpi_{\mathfrak{d}_{p_1},\tilde{\mathfrak{d}}_{p_2}}$ (independent of $\epsilon$) such that
\begin{equation}
|\omega_{\bk}^{\mathfrak{d}_{p_1},\tilde{\mathfrak{d}}_{p_2}}(\btau,m,\epsilon)| \leq \varpi_{\mathfrak{d}_{p_1},\tilde{\mathfrak{d}}_{p_2}}(1+ |m|)^{-\mu} e^{-\beta|m|}
\frac{ |\frac{\tau_1}{\epsilon}|}{1 + |\frac{\tau_1}{\epsilon}|^{2k_1}}\frac{ |\frac{\tau_2}{\epsilon}|}{1 + |\frac{\tau_2}{\epsilon}|^{2k_2}} \exp( \nu_1 |\frac{\tau_1}{\epsilon}|^{k_1}+\nu_2 |\frac{\tau_2}{\epsilon}|^{k_2}) \label{|omega_k_d_p|<} 
\end{equation}
for all $\btau \in (D(0,\rho) \cup S_{\mathfrak{d}_{p_1}})\times (D(0,\rho) \cup S_{\tilde{\mathfrak{d}}_{p_2}}) $, all $m \in \mathbb{R}$ and
$\epsilon \in D(0,\epsilon_{0}) \setminus \{ 0 \}$.  The function
$$ (\bT,z) \mapsto \mathbf{U}^{\mathfrak{d}_{p_1},\tilde{\mathfrak{d}}_{p_2}}(\bT,z,\epsilon) = \mathcal{F}^{-1}(m \mapsto U^{\mathfrak{d}_{p_1},\tilde{\mathfrak{d}}_{p_2}}(\bT,m,\epsilon))(z) $$
turns out to be holomorphic on $S_{\mathfrak{d}_{p_1},\theta_1,h'|\epsilon|}\times S_{\tilde{\mathfrak{d}}_{p_2},\theta_2,h'|\epsilon|} \times H_{\beta'}$, for all
$\epsilon \in D(0,\epsilon_{0}) \setminus \{ 0 \}$ and $0 < \beta' < \beta$. For all $0 \leq p_j \leq \varsigma_j - 1$, $j\in\{1,2\}$ let
\begin{multline*}
u_{p_1,p_2}(\bt,z,\epsilon) = \mathbf{U}^{\mathfrak{d}_{p_1},\tilde{\mathfrak{d}}_{p_2}}(\epsilon t_1,\epsilon t_2,z,\epsilon)\\
= \frac{k_1k_2}{(2\pi)^{1/2}}\int_{-\infty}^{+\infty}
\int_{L_{\gamma_{p_1}}}\int_{L_{\gamma_{p_2}}}
\omega_{\bk}^{\mathfrak{d}_{p_1},\tilde{\mathfrak{d}}_{p_2}}(u_1,u_2,m,\epsilon) e^{-(\frac{u_1}{\epsilon t_1})^{k_1}-(\frac{u_2}{\epsilon t_2})^{k_2}} e^{izm} \frac{du_2}{u_2}\frac{du_1}{u_1} dm.
\end{multline*}
By construction (see Definition~\ref{defgood2}), the function $u_{p_1,p_2}(\bt,z,\epsilon)$ defines a bounded holomorphic function on
 $(\mathcal{T}_1 \cap D(0,h')) \times (\mathcal{T}_2 \cap D(0,h'))\times H_{\beta'} \times \mathcal{E}_{p_1,p_2}$. Moreover,
$u_{p_1,p_2}(0,t_2,z,\epsilon) \equiv u_{p_1,p_2}(t_1,0,z,\epsilon)\equiv 0$. Moreover, the properties of inverse Fourier transform described in Proposition~\ref{prop359} guarantee that
$u_{p_1,p_2}(\bt,z,\epsilon)$ is a solution of the main problem under study (\ref{ICP_main}) on
$(\mathcal{T}_1 \cap D(0,h'))\times (\mathcal{T}_2 \cap D(0,h')) \times H_{\beta'} \times \mathcal{E}_{p_1,p_2}$.

It is worth mentioning that all the functions $\btau \mapsto \omega_{\bk}^{\mathfrak{d}_{p_1},\tilde{\mathfrak{d}}_{p_2}}(\btau,m,\epsilon)$ provide the analytic continuation of a common function 
$$\btau\mapsto  \omega_{\bk}(\btau,m,\epsilon) = \sum_{n_1 \geq 1,n_2\ge 1} U_{n_1,n_2}(m,\epsilon) \frac{\tau_1^{n_1}}{\Gamma(\frac{n_1}{k_1})} \frac{\tau_2^{n_2}}{\Gamma(\frac{n_2}{k_2})}\in\mathcal{O}(D(0,\rho)^2,E_{(\beta,\mu)}) $$
to $S_{\mathfrak{d}_{p_1}}\times S_{\tilde{\mathfrak{d}}_{p_2}}$. Observe that $U_{n_1,n_2}(m,\epsilon) \in E_{(\beta,\mu)}$ are the coefficients of the formal solution of the equation (\ref{SCP_2}), for
all $\epsilon \in D(0,\epsilon_{0}) \setminus \{ 0 \}$, $\hat{U}(T_1,T_2,m,\epsilon) = \sum_{n_1 \geq 1,n_2\ge 1} U_{n_1,n_2}(m,\epsilon)T_1^{n_1}T_2^{n_2}$.

The proof of the estimates (\ref{exp_small_difference_u_p11}) and (\ref{exp_small_difference_u_p12}) leans on those in the proof of Theorem 1 in~\cite{lama}. In the present situation, different digressions are considered, due to the presence of two time variables. Let $p_j,p'_j \in \{ 0,\ldots,\varsigma_j - 1 \}$ for $j\in\{1,2\}$, and assume that $\mathcal{E}_{p_1,p_2}\cap\mathcal{E}_{p'_1,p'_2}\neq\emptyset$. Then, three different cases should be considered:

\textbf{Case 1:} Assume that the path $L_{\gamma_{p_1}}$ coincides with $L_{\gamma_{p'_1}}$, and $L_{\gamma_{p_2}}$ does not coincide with $L_{\gamma_{p'_2}}$. Then, using that
 $u_2 \mapsto \omega_{\bk}^{\mathfrak{d}_{p_1},\tilde{\mathfrak{d}}_{p_2}}(u_1,u_2,m,\epsilon) \exp( -(\frac{u_2}{\epsilon t_2})^{k_2} )/u_2$ is holomorphic on $D(0,\rho)$ for all
$(m,\epsilon) \in \mathbb{R} \times (D(0,\epsilon_{0}) \setminus \{ 0 \})$, and every $u_1\in L_{\gamma_{p_1}}$, one can deform one of the integration paths to write
$$I=\int_{L_{\gamma_{p_2}}}\omega_{\bk}^{\mathfrak{d}_{p_1},\tilde{\mathfrak{d}}_{p_2}}(u_1,u_2,m,\epsilon)e^{-\left(\frac{u_2}{\epsilon t_2}\right)^{k_2}}\frac{du_2}{u_2}-\int_{L_{\gamma_{p'_2}}}\omega_{\bk}^{\mathfrak{d}_{p_1},\tilde{\mathfrak{d}}_{p_2}}(u_1,u_2,m,\epsilon)e^{-\left(\frac{u_2}{\epsilon t_2}\right)^{k_2}}\frac{du_2}{u_2}$$
in the form
\begin{multline}
\int_{L_{\rho/2,\gamma_{p_2}}}
\omega_{\bk}^{\mathfrak{d}_{p_1},\tilde{\mathfrak{d}}_{p_2}}(u_1,u_2,m,\epsilon) e^{-(\frac{u_2}{\epsilon t_2})^{k_2}} \frac{du_2}{u_2} \\ 
-\int_{L_{\rho/2,\gamma_{p'_2}}}
\omega_{\bk}^{\mathfrak{d}_{p_1},\tilde{\mathfrak{d}}_{p_2}}(u_1,u_2,m,\epsilon) e^{-(\frac{u_2}{\epsilon t_2})^{k_2}} \frac{du_2}{u_2}\\
+ \int_{C_{\rho/2,\gamma_{p'_2},\gamma_{p_2}}}
\omega_{\bk}^{\mathfrak{d}_{p_1},\tilde{\mathfrak{d}}_{p_2}}(u_1,u_2,m,\epsilon) e^{-(\frac{u_2}{\epsilon t_2})^{k_2}} \frac{du_2}{u_2}. \label{difference_u_p_decomposition}
\end{multline}
where $L_{\rho/2,\gamma_{p_2}} = [\rho/2,+\infty)e^{i\gamma_{p_2}}$,
$L_{\rho/2,\gamma_{p'_2}} = [\rho/2,+\infty)e^{i\gamma_{p'_2}}$ and
$C_{\rho/2,\gamma_{p'_2},\gamma_{p_2}}$ is an arc of circle connecting
$(\rho/2)e^{i\gamma_{p'_2}}$ and $(\rho/2)e^{i\gamma_{p_2}}$ with the adequate orientation.\medskip

The estimates for the previous expression can be found in detail in the proof of Theorem 1,~\cite{lama}. 



Namely, we get the existence of constants $C_{p_2,p'_2},M_{p_2,p'_2}>0$ such that
$$|I|\le C_{p_2,p'_2}\varpi_{\mathfrak{d}_{p_1},\tilde{\mathfrak{d}}_{p_2}}(1+ |m|)^{-\mu} e^{-\beta|m|} \frac{ |\frac{u_1}{\epsilon}|}{1 + |\frac{u_1}{\epsilon}|^{2k_1}}
\exp( \nu_1 |\frac{u_1}{\epsilon}|^{k_1})e^{-\frac{M_{p_2,p'_2}}{|\epsilon|^{k_2}}},$$
for $t_2\in\mathcal{T}_2\cap D(0,h')$ and $\epsilon\in\mathcal{E}_{p_1,p_2}\cap\mathcal{E}_{p'_1,p'_2}$ and $u_1\in L_{\gamma_{p_1}}$. We have
\begin{multline}
|u_{p_1,p_2}(\bt,z,\epsilon)-u_{p'_1,p'_2}(\bt,z,\epsilon)|\\
\le \frac{k_1k_2}{(2\pi)^{1/2}}C_{p_2,p'_2}\left(\int_{-\infty}^{\infty}(1+|m|)^{-\mu}e^{-\beta|m|}e^{-m|\hbox{Im}(z)|}dm\right)\\
\times \int_{L_{\gamma_{p_1}}}\frac{ |\frac{u_1}{\epsilon}|}{1 + |\frac{u_1}{\epsilon}|^{2k_1}}
\exp( \nu_1 |\frac{u_1}{\epsilon}|^{k_1})\exp(-\left(\frac{u_1}{\epsilon t_1}\right)^{k_1})\left|\frac{du_1}{u_1}\right|    e^{-\frac{M_{p_2,p'_2}}{|\epsilon|^{k_2}}}.
\end{multline}
The last integral is estimated via the reparametrization $u_1=re^{\gamma_{p_1}\sqrt{-1}}$ and the change of variable $r=|\epsilon|s$ by 
$$\int_0^{\infty}\frac{1}{1+s^2}e^{-\delta_{1}s^{k_1}}ds,$$
for some $\delta_{1}>0$, whenever $t_1\in\mathcal{T}_1\cap D(0,h')$.

From the fact that $z\in H_{\beta'}$, we get that $\{(p_1,p_2),(p'_1,p'_2)\}$ belong to $\mathcal{U}_{k_2}$.\medskip

\begin{figure}
	\centering
	\includegraphics[width=0.4\textwidth]{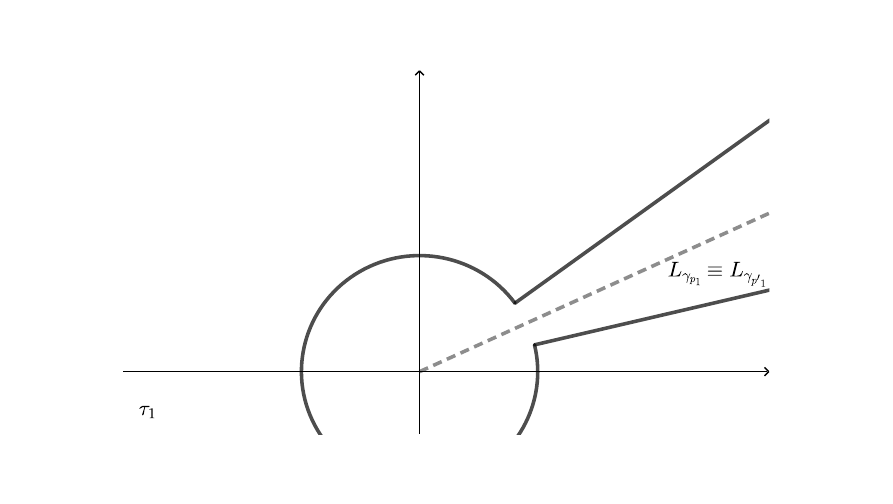}
	\includegraphics[width=0.4\textwidth]{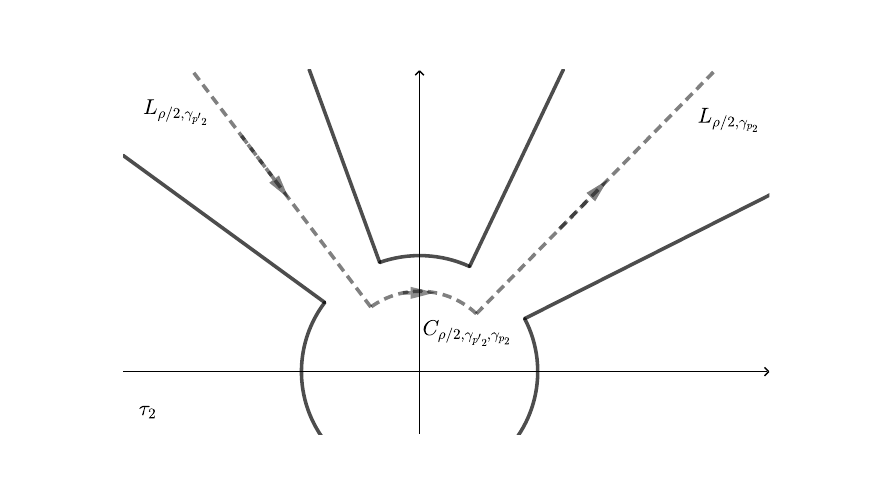}
	\caption{Path deformation in Case 1}
\end{figure}

\textbf{Case 2}: The path $L_{\gamma_{p_2}}$ coincides with $L_{\gamma_{p'_2}}$, and $L_{\gamma_{p_1}}$ does not coincide with $L_{\gamma_{p'_1}}$. It can be handled analogously as Case 1. We get that the set $\{(p_1,p_2),(p'_1,p'_2)\}$ belongs to $\mathcal{U}_{k_1}$. More precisely, we arrive at the expression
\begin{multline*}
|u_{p_1,p_2}(\bt,z,\epsilon)-u_{p'_1,p'_2}(\bt,z,\epsilon)|\\
\le \frac{k_1k_2}{(2\pi)^{1/2}}C_{p_1,p'_1}\left(\int_{-\infty}^{\infty}(1+|m|)^{-\mu}e^{-\beta|m|}e^{-m|\hbox{Im}(z)|}dm\right)\\
\times \int_{L_{\gamma_{p_2}}}\frac{ |\frac{u_2}{\epsilon}|}{1 + |\frac{u_2}{\epsilon}|^{2k_2}}
\exp( \nu_2 |\frac{u_2}{\epsilon}|^{k_2})\exp(-\left(\frac{u_2}{\epsilon t_2}\right)^{k_2})\left|\frac{du_2}{u_2}\right|    e^{-\frac{M_{p_1,p'_1}}{|\epsilon|^{k_1}}}.
\end{multline*}

\textbf{Case 3:} Assume that neither $L_{\gamma_{p_1}}$ coincides with $L_{\gamma_{p'_1}}$, nor $L_{\gamma_{p_2}}$ coincides with $L_{\gamma_{p'_2}}$. 

We deform the integration paths with respect to the first time variable and write

$$u_{p_1,p_2}(\bt,z,\epsilon)-u_{p'_1,p'_2}(\bt,z,\epsilon)=J_1-J_2+J_3,$$
where
$$J_1=\frac{k_1k_2}{(2\pi)^{1/2}} \int_{L_{\gamma_{p_1},1}}\int_{L_{\gamma_{p_2}}}\int_{-\infty}^{\infty} \omega_{\bk}^{\mathfrak{d}_{p_1},\tilde{\mathfrak{d}}_{p_2}}(u_1,u_2,m,\epsilon)
e^{-(\frac{u_1}{\epsilon t_1})^{k_1}-(\frac{u_2}{\epsilon t_2})^{k_2}}e^{izm} dm\frac{du_2}{u_2}\frac{du_1}{u_1}.$$

$$J_2=\frac{k_1k_2}{(2\pi)^{1/2}} \int_{L_{\gamma_{p'_1},1}}\int_{L_{\gamma_{p'_2}}}\int_{-\infty}^{\infty} \omega_{\bk}^{\mathfrak{d}_{p'_1},\tilde{\mathfrak{d}}_{p'_2}}(u_1,u_2,m,\epsilon)
e^{-(\frac{u_1}{\epsilon t_1})^{k_1}-(\frac{u_2}{\epsilon t_2})^{k_2}}e^{izm} dm\frac{du_2}{u_2}\frac{du_1}{u_1}.$$

\begin{multline*}
J_3=\frac{k_1k_2}{(2\pi)^{1/2}} \int_{0}^{\frac{\rho}{2}e^{i\theta}}\left(\int_{-\infty}^{\infty}\left(\int_{L_{\gamma_{p_2}}} \omega_{\bk}^{\mathfrak{d}_{p_1},\tilde{\mathfrak{d}}_{p_2}}(u_1,u_2,m,\epsilon)
e^{-(\frac{u_2}{\epsilon t_2})^{k_2}} \frac{du_2}{u_2}\right.\right.\\
\left.\left.-\int_{L_{\gamma_{p'_2}}} \omega_{\bk}^{\mathfrak{d}_{p'_1},\tilde{\mathfrak{d}}_{p'_2}}(u_1,u_2,m,\epsilon)
e^{-(\frac{u_2}{\epsilon t_2})^{k_2}} \frac{du_2}{u_2}\right)e^{izm}dm\right)e^{-(\frac{u_1}{\epsilon t_1})^{k_1}}\frac{du_1}{u_1}, 
\end{multline*}
where $\frac{\rho}{2}e^{i\theta}$ is such that $\theta$ is an argument between $\gamma_{p_1}$ and $\gamma_{p'_1}$. The path $L_{\gamma_{p_1},1}$ (resp. $L_{\gamma_{p'_1},1}$) consists of the concatenation of the arc of circle connecting $\frac{\rho}{2}e^{i\theta}$ with $\frac{\rho}{2}e^{i\gamma_{p_1}}$ (resp. with $\frac{\rho}{2}e^{i\gamma_{p'_1}}$) and the half line $[\frac{\rho}{2}e^{i\gamma_{p_1}},\infty)$ (resp. $[\frac{\rho}{2}e^{i\gamma_{p'_1}},\infty)$).

\begin{figure}
	\centering
	\includegraphics[width=0.3\textwidth]{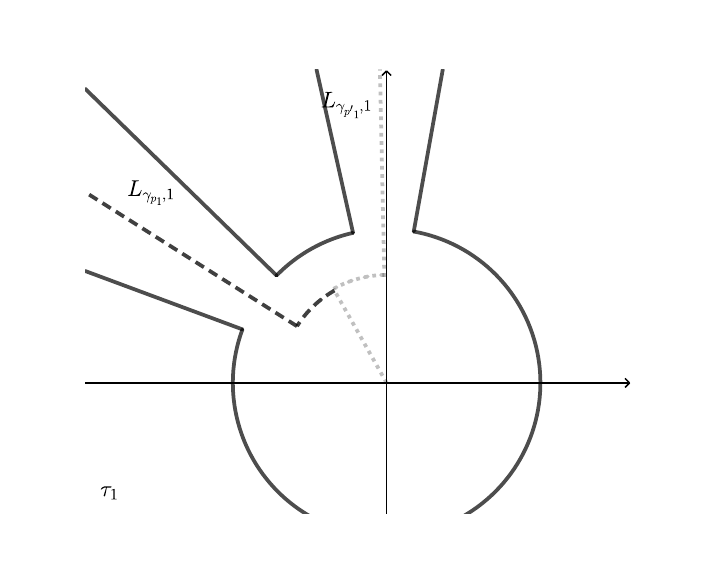} \vline\vline \includegraphics[width=0.3\textwidth]{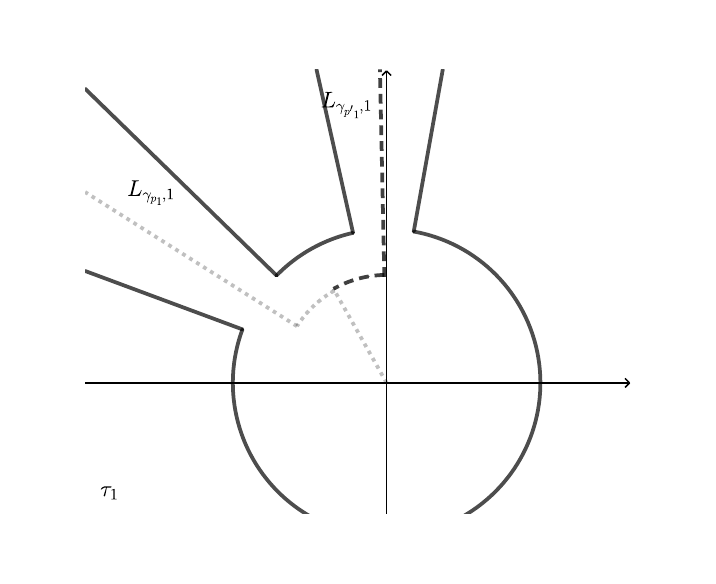} \vline\vline \includegraphics[width=0.3\textwidth]{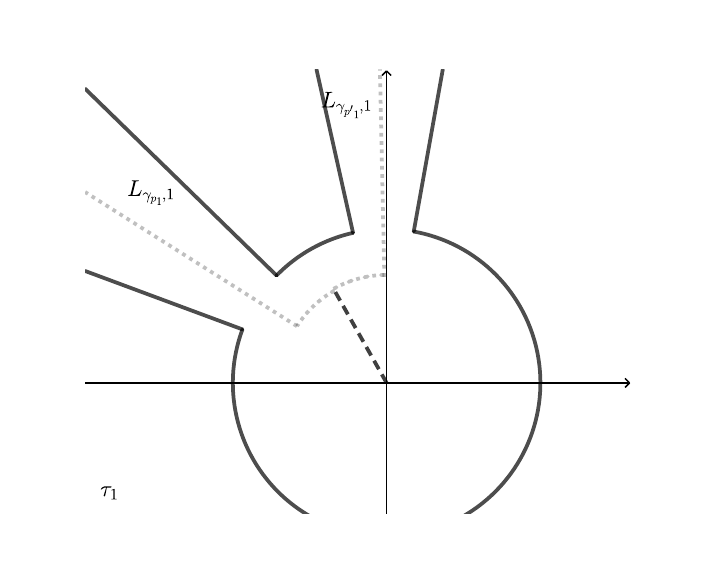}\\
	\includegraphics[width=0.3\textwidth]{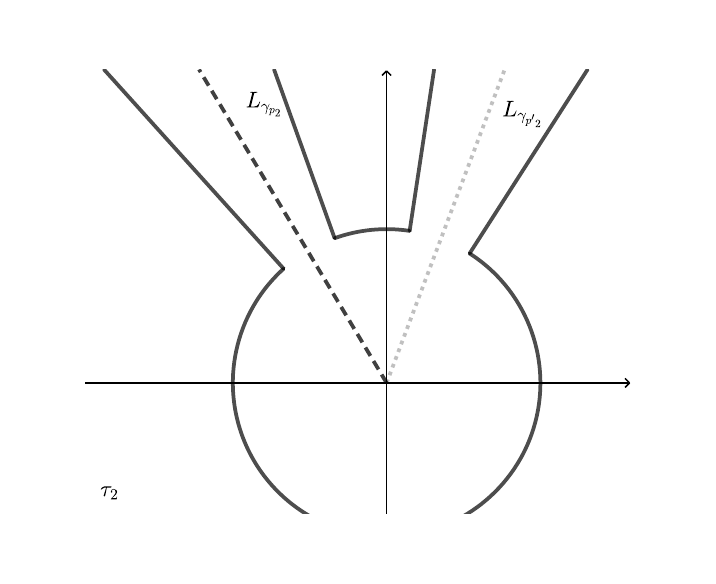} \vline\vline \includegraphics[width=0.3\textwidth]{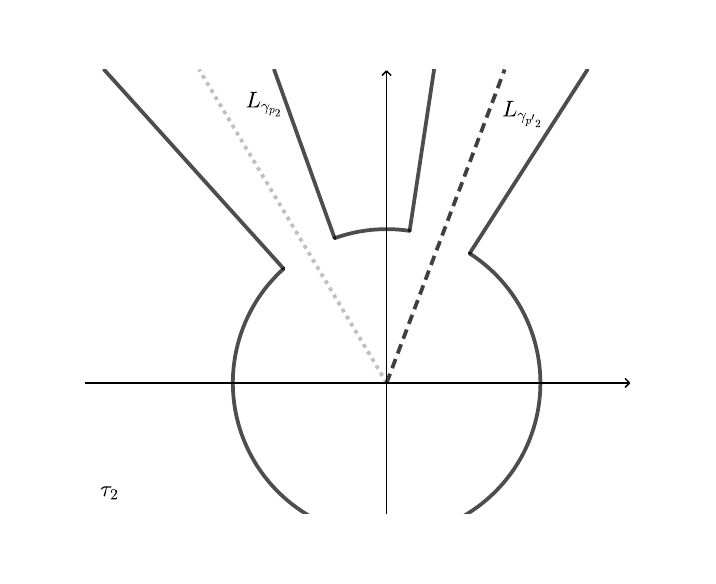} \vline\vline \includegraphics[width=0.3\textwidth]{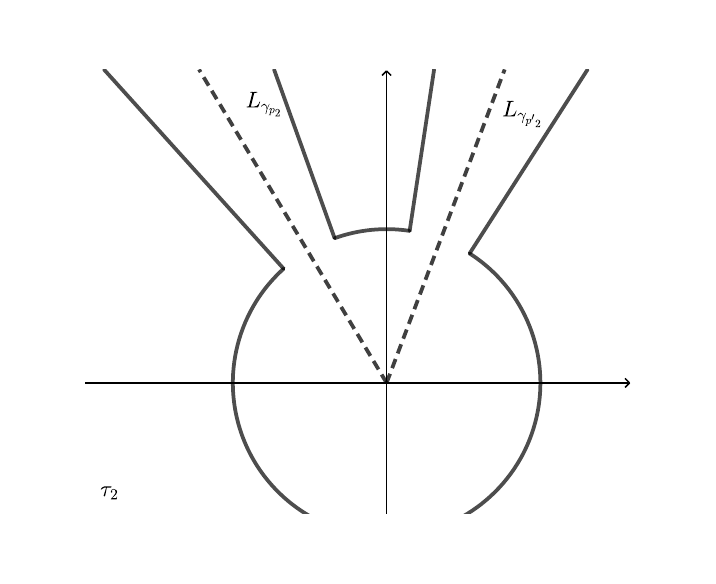}
	\caption{Path deformation in Case 3}
\end{figure}

We first give estimates for $|J_1|$. We have
\begin{multline*}
\left|\int_{L_{\gamma_{p_2}}} \omega_{\bk}^{\mathfrak{d}_{p_1},\tilde{\mathfrak{d}}_{p_2}}(u_1,u_2,m,\epsilon)
e^{-(\frac{u_2}{\epsilon t_2})^{k_2}}\frac{du_2}{u_2}\right|\le \varpi_{\mathfrak{d}_{p_1},\tilde{\mathfrak{d}}_{p_2}}(1+ |m|)^{-\mu} e^{-\beta|m|} \frac{ |\frac{u_1}{\epsilon}|}{1 + |\frac{u_1}{\epsilon}|^{2k_1}}
\exp( \nu_1 |\frac{u_1}{\epsilon}|^{k_1})\\
\times \int_{L_{\gamma_{p_2}}} \left(\frac{ |\frac{u_2}{\epsilon}|}{1 + |\frac{u_2}{\epsilon}|^{2k_2}}\exp(\nu_2\left|\frac{u_2}{\epsilon}\right|^{k_2}\right)|e^{-\left(\frac{u_2}{\epsilon t_2}\right)^{k_2}}|\left|\frac{du_2}{u_2}\right|\\
\le \varpi_{\mathfrak{d}_{p_1},\tilde{\mathfrak{d}}_{p_2}}C_{p_2}(1+ |m|)^{-\mu} e^{-\beta|m|} \frac{ |\frac{u_1}{\epsilon}|}{1 + |\frac{u_1}{\epsilon}|^{2k_1}}
\exp( \nu_1 |\frac{u_1}{\epsilon}|^{k_1}),
\end{multline*}
for some $C_{p_2}>0$, and $t_2\in\mathcal{T}_2\cap D(0,h')$. Using the parametrization $u_2=re^{\gamma_{p_2}\sqrt{-1}}$ and the change of variable $r=|\epsilon|s$. Using analogous estimations as in the Case 1, we arrive at
$$|J_1|\le C_{p,1}e^{-\frac{M_{p,1}}{|\epsilon|^{k_1}}},$$
for some $C_{p,1},M_{p,1}>0$, for all $\epsilon\in\mathcal{E}_{p_1,p_2}\cap \mathcal{E}_{p'_1,p'_2}$, where $t_1\in\mathcal{T}_{1}\cap D(0,h')$ and $t_2\in\mathcal{T}_{2}\cap D(0,h')$, $z\in H_{\beta'}$.

Analogous calculations yield to 
$$|J_2|\le C_{p,2}e^{-\frac{M_{p,2}}{|\epsilon|^{k_1}}},$$
for some $C_{p,2},M_{p,2}>0$, for all $\epsilon\in\mathcal{E}_{p_1,p_2}\cap \mathcal{E}_{p'_1,p'_2}$, where $t_1\in\mathcal{T}_{1}\cap D(0,h')$ and $t_2\in\mathcal{T}_{2}\cap D(0,h')$, $z\in H_{\beta'}$.

In order to give upper bounds for $|J_3|$, we consider
$$\left|\int_{L_{\gamma_{p_2}}} \omega_{\bk}^{\mathfrak{d}_{p_1},\tilde{\mathfrak{d}}_{p_2}}(u_1,u_2,m,\epsilon)
e^{-(\frac{u_2}{\epsilon t_2})^{k_2}} \frac{du_2}{u_2}-\int_{L_{\gamma_{p'_2}}} \omega_{\bk}^{\mathfrak{d}_{p'_1},\tilde{\mathfrak{d}}_{p'_2}}(u_1,u_2,m,\epsilon)
e^{-(\frac{u_2}{\epsilon t_2})^{k_2}} \frac{du_2}{u_2}\right|.$$
We choose a deformation path in the form of that considered in Case 1. We get the previous expression is upper estimated by
$$\varpi_{\mathfrak{d}_{p_1},\tilde{\mathfrak{d}}_{p_2}}C_{p_2,p'_2}(1+ |m|)^{-\mu} e^{-\beta|m|} \frac{ |\frac{u_1}{\epsilon}|}{1 + |\frac{u_1}{\epsilon}|^{2k_1}}
\exp( \nu_1 |\frac{u_1}{\epsilon}|^{k_1})\exp\left(-\frac{M_{p_2,p'_2}}{|\epsilon|^{k_2}}\right),$$
for $\epsilon\in\mathcal{E}_{p_1,p_2}\cap \mathcal{E}_{p'_1,p'_2}$,  $t_2\in\mathcal{T}_{2}\cap D(0,h')$, $u_1\in[0,\rho/2e^{i\theta}]$.
We finally get 
\begin{multline*}
|J_3|\le\frac{k_1k_2}{(2\pi)^{1/2}} C_{p_2,p'_2}\varpi_{\mathfrak{d}_{p_1},\tilde{\mathfrak{d}}_{p_2}}    \left(\int_{-\infty}^{\infty}(1+|m|)^{-\mu}e^{-\beta|m|}e^{-m|\hbox{Im}(z)|}dm\right)\\
\times\left(\int_{0}^{\rho/2e^{i\theta}}\frac{ |\frac{u_1}{\epsilon}|}{1 + |\frac{u_1}{\epsilon}|^{2k_1}}\exp(\nu_1|\frac{u_1}{\epsilon}|^{k_1})|e^{-\left(\frac{u_1}{\epsilon t_1}\right)^{k_1}}|\left|\frac{d u_1}{u_1}\right|\right)\exp\left(-\frac{M_{p_2,p'_2}}{|\epsilon|^{k_2}}\right).
\end{multline*}
We conclude that 
$$|J_3|\le K_{p,3}e^{-\frac{M_{p,3}}{|\epsilon|^{k_2}}},$$
uniformly for $(t_1,t_2)\in (\mathcal{T}_1\cap D(0,h''))\times (\mathcal{T}_2\cap D(0,h''))$ for some $h''>0$, and $z\in H_{\beta'}$ for any fixed $\beta'<\beta$, where $K_{\boldsymbol{p},3},M_{\boldsymbol{p},3}$ are positive constants.
\end{proof}

\section{Asymptotics of the problem in the perturbation parameter}

\subsection{$k-$Summable formal series and Ramis-Sibuya Theorem}

For the sake of completeness, we recall the definition of $k-$Borel summability of formal series with coefficients in a Banach space, and Ramis-Sibuya Theorem. A reference for the details on the first part is~\cite{ba}, whilst the second part of this section can be found in~\cite{ba2}, p. 121, and~\cite{hssi}, Lemma XI-2-6.
 
\begin{defin} Let $k \geq 1$ be an integer. A formal series
$$\hat{X}(\epsilon) = \sum_{j=0}^{\infty}  \frac{ a_{j} }{ j! } \epsilon^{j} \in \mathbb{F}[[\epsilon]]$$
with coefficients in a Banach space $( \mathbb{F}, ||.||_{\mathbb{F}} )$ is said to be $k-$summable
with respect to $\epsilon$ in the direction $d \in \mathbb{R}$ if \medskip

{\bf i)} there exists $\rho \in \mathbb{R}_{+}$ such that the following formal series, called formal
Borel transform of $\hat{X}$ of order $k$ 
$$ \mathcal{B}_{k}(\hat{X})(\tau) = \sum_{j=0}^{\infty} \frac{ a_{j} \tau^{j}  }{ j!\Gamma(1 + \frac{j}{k}) } \in \mathbb{F}[[\tau]],$$
is absolutely convergent for $|\tau| < \rho$, \medskip

{\bf ii)} there exists $\delta > 0$ such that the series $\mathcal{B}_{k}(\hat{X})(\tau)$ can be analytically continued with
respect to $\tau$ in a sector
$S_{d,\delta} = \{ \tau \in \mathbb{C}^{\ast} : |d - \mathrm{arg}(\tau) | < \delta \} $. Moreover, there exist $C >0$, and $K >0$
such that
$$ ||\mathcal{B}(\hat{X})(\tau)||_{\mathbb{F}}
\leq C e^{ K|\tau|^{k}} $$
for all $\tau \in S_{d, \delta}$.
\end{defin}
If this is so, the vector valued Laplace transform of order $k$ of $\mathcal{B}_{k}(\hat{X})(\tau)$ in the direction $d$ is defined by
$$ \mathcal{L}^{d}_{k}(\mathcal{B}_{k}(\hat{X}))(\epsilon) = \epsilon^{-k} \int_{L_{\gamma}}
\mathcal{B}_{k}(\hat{X})(u) e^{ - ( u/\epsilon )^{k} } ku^{k-1}du,$$
along a half-line $L_{\gamma} = \mathbb{R}_{+}e^{i\gamma} \subset S_{d,\delta} \cup \{ 0 \}$, where $\gamma$ depends on
$\epsilon$ and is chosen in such a way that
$\cos(k(\gamma - \mathrm{arg}(\epsilon))) \geq \delta_{1} > 0$, for some fixed $\delta_{1}$, for all
$\epsilon$ in a sector
$$ S_{d,\theta,R^{1/k}} = \{ \epsilon \in \mathbb{C}^{\ast} : |\epsilon| < R^{1/k} \ \ , \ \ |d - \mathrm{arg}(\epsilon) |
< \theta/2 \},$$
where $\frac{\pi}{k} < \theta < \frac{\pi}{k} + 2\delta$ and $0 < R < \delta_{1}/K$. The
function $\mathcal{L}^{d}_{k}(\mathcal{B}_{k}(\hat{X}))(\epsilon)$
is called the $k-$sum of the formal series $\hat{X}(t)$ in the direction $d$. It is bounded and holomorphic on the sector
$S_{d,\theta,R^{1/k}}$ and has the formal series $\hat{X}(\epsilon)$ as Gevrey asymptotic
expansion of order $1/k$ with respect to $\epsilon$ on $S_{d,\theta,R^{1/k}}$. This means that for all
$\frac{\pi}{k} < \theta_{1} < \theta$, there exist $C,M > 0$ such that
$$ ||\mathcal{L}^{d}_{k}(\mathcal{B}_{k}(\hat{X}))(\epsilon) - \sum_{p=0}^{n-1}
\frac{a_p}{p!} \epsilon^{p}||_{\mathbb{F}} \leq CM^{n}\Gamma(1+ \frac{n}{k})|\epsilon|^{n} $$
for all $n \geq 1$, all $\epsilon \in S_{d,\theta_{1},R^{1/k}}$.\medskip

 Multisummability of a formal power series is a recursive process that allows to compute the sum of a formal power series in different Gevrey orders. One of the approaches to multisummability is that stated by W. Balser, which can be found in~\cite{ba}, Theorem 1, p.57. Roughly speaking, given a formal power series $\hat{f}$ which can be decomposed into a sum $\hat{f}(z)=\hat{f}_1(z)+\ldots+\hat{f}_m(z)$ such that each of the terms $\hat{f}_j(z)$ is $k_j$-summable, with sum given by $f_j$, then, $\hat{f}$ turns out to be multisummable, and its multisum is given by $f_1(z)+\ldots+f_m(z)$. More precisely, one has the following definition.

\begin{defin} Let $(\mathbb{F},\left\|\cdot\right\|_{\mathbb{F}})$ be a complex Banach space and let $0<k_1<k_2$. Let $\mathcal{E}$ be a bounded open sector with vertex at 0, and opening $\frac{\pi}{k_2}+\delta_2$ for some $\delta_2>0$, and let $\mathcal{F}$ be a bounded open sector with vertex at the origin in $\C$, with opening $\frac{\pi}{k_1}+\delta_1$, for some $\delta_1>0$ and such that $\mathcal{E}\subseteq\mathcal{F}$ holds.\smallskip

A formal power series $\hat{f}(\epsilon)\in\mathbb{F}[[\epsilon]]$ is said to be $(k_2,k_1)-$summable on $\mathcal{E}$ if there exist $\hat{f}_2(\epsilon)\in\mathbb{F}[[\epsilon]]$ which is $k_2-$summable on $\mathcal{E}$, with $k_2$-sum given by $f_2:\mathcal{E}\to\mathbb{F}$, and $\hat{f}_1(\epsilon)\in\mathbb{F}[[\epsilon]]$ which is $k_1-$summable on $\mathbb{E}$, with $k_1$-sum given by $f_1:\mathcal{F}\to\mathbb{F}$, such that $\hat{f}=\hat{f}_1+\hat{f}_2$. Furthermore, the holomorphic function $f(\epsilon)=f_1(\epsilon)+f_2(\epsilon)$ on $\mathcal{E}$ is called the $(k_2,k_1)-$sum of $\hat{f}$ on $\mathcal{E}$. In that situation, $f(\epsilon)$ can be obtained from the analytic continuation of the $k_1-$Borel transform of $\hat{f}$ by the successive application of accelerator operators and Laplace transform of order $k_2$, see Section 6.1 in~\cite{ba}.
\end{defin}






A novel version of Ramis-Sibuya Theorem has been developed in~\cite{takei}, and has provided successful results in previous works by the authors,~\cite{lama1},~\cite{lama2}. A version of the result in two different levels which fits our needs is now given without proof, which can be found in~\cite{lama1},~\cite{lama2}.

\noindent {\bf Theorem (multilevel-RS)} {\it Let $(\mathbb{F},||.||_{\mathbb{F}})$ be a Banach space over $\mathbb{C}$ and
$\{ \mathcal{E}_{p_1,p_2} \}_{\begin{subarray}{l} 0 \leq p_1 \leq \varsigma_1 - 1\\0 \leq p_2 \leq \varsigma_2 - 1\end{subarray}}$ be a good covering in $\mathbb{C}^{\ast}$. Assume that $0<k_1<k_2$. For all
$0 \leq p_1 \leq \varsigma_1 - 1$ and $0\le p_2\le \varsigma_2-1$, let $G_{p_1,p_2}$ be a holomorphic function from $\mathcal{E}_{p_1,p_2}$ into
the Banach space $(\mathbb{F},||.||_{\mathbb{F}})$ and for every $(p_1,p_2),(p'_1,p'_2)\in\{0,\ldots,\varsigma_1-1\}\times\{0,\ldots,\varsigma_2-1\}$ such that $\mathcal{E}_{p_1,p_2}\cap\mathcal{E}_{p'_1,p'_2}\neq\emptyset$ we define $\Theta_{(p_1,p_2)(p'_1,p'_2)}(\epsilon) = G_{p_1,p_2}(\epsilon) - G_{p'_1,p'_2}(\epsilon)$
be a holomorphic function from the sector $Z_{(p_1,p_2),(p'_1,p'_2)} = \mathcal{E}_{p_1,p_2} \cap \mathcal{E}_{p'_1,p'_2}$ into $\mathbb{F}$.
We make the following assumptions.\medskip

\noindent {\bf 1)} The functions $G_{p_1,p_2}(\epsilon)$ are bounded as $\epsilon \in \mathcal{E}_{p_1,p_2}$ tends to the origin
in $\mathbb{C}$, for all $0 \leq p_1 \leq \varsigma_1 - 1$ and $0\le p_2\le \varsigma_2-1$.\medskip

\noindent {\bf 2)} $(\{0,\ldots,\varsigma_1-1\}\times\{0,\ldots,\varsigma_2\})^2=\mathcal{U}_0\cup\mathcal{U}_{k_1}\cup\mathcal{U}_{k_2}$, where 

$((p_1,p_2),(p'_1,p'_2))\in\mathcal{U}_0$ iff $\mathcal{E}_{p_1,p_2}\cap\mathcal{E}_{p'_1,p'_2}=\emptyset$,

$((p_1,p_2),(p'_1,p'_2))\in\mathcal{U}_{k_1}$ iff $\mathcal{E}_{p_1,p_2}\cap\mathcal{E}_{p'_1,p'_2}\neq\emptyset$ and 
$$ ||\Theta_{(p_1,p_2),(p'_1,p'_2)}(\epsilon)||_{\mathbb{F}} \leq C_{p_1,p_2,p'_1,p'_2}e^{-A_{p_1,p_2,p'_1,p'_2}/|\epsilon|^{k_1}} $$
for all $\epsilon \in Z_{(p_1,p_2),(p'_1,p'_2)}$.

$((p_1,p_2),(p'_1,p'_2))\in\mathcal{U}_{k_2}$ iff $\mathcal{E}_{p_1,p_2}\cap\mathcal{E}_{p'_1,p'_2}\neq\emptyset$ and 
$$ ||\Theta_{(p_1,p_2),(p'_1,p'_2)}(\epsilon)||_{\mathbb{F}} \leq C_{p_1,p_2,p'_1,p'_2}e^{-A_{p_1,p_2,p'_1,p'_2}/|\epsilon|^{k_2}} $$
for all $\epsilon \in Z_{(p_1,p_2),(p'_1,p'_2)}$.

Then, there exists a convergent power series $a(\epsilon)\in \mathbb{F}\{\epsilon\}$ and two formal power series $\hat{G}^1(\epsilon),\hat{G}^2(\epsilon)\in\mathbb{F}[[\epsilon]]$ such that $G_{p_1,p_2}(\epsilon)$ can be split in the form
$$G_{p_1,p_2}(\epsilon)=a(\epsilon)+G^{1}_{p_1,p_2}(\epsilon)+G^{2}_{p_1,p_2}(\epsilon),$$
where $G^{j}_{p_1,p_2}(\epsilon)\in\mathcal{O}(\mathcal{E}_{p_1,p_2},\mathbb{F})$, and admits $\hat{G}^j(\epsilon)$ as its asymptotic expansion of Gevrey order $1/k_j$ on $\mathcal{E}_{p_1,p_2}$, for $j\in\{1,2\}$.\smallskip

Moreover, assume that
$$\{((p_1^0,p_2^0),(p_1^1,p_2^1)), ((p_1^1,p_2^1),(p_1^2,p_2^2)), \ldots, ((p_1^{2y-1},p_2^{2y-1}),(p_1^{2y},p_2^{2y}))     \}$$
is a subset of $\mathcal{U}_{k_2}$, for some positive integer $y$, and
$$\mathcal{E}_{p_1^{y},p_2^y}\subseteq S_{\pi/k_1}\subseteq\bigcup_{0\le j\le 2y}\mathcal{E}_{p_1^{j},p_2^{j}},$$
for some sector $S_{\pi/k_1}$ with opening larger than $\pi/k_1$. Then, the formal power series $\hat{G}(\epsilon)$ is $(k_2,k_1)-$summable on $\mathcal{E}_{p_1^y,p_2^y}$ and its $(k_2,k_1)-$sum is $G_{p_1^y,p_2^y}(\epsilon)$ on $\mathcal{E}_{p_1^y,p_2^y}$.}

\subsection{Existence of formal power series solutions in the complex parameter and asymptotic behavior}

The second main result of our work states the existence of a formal power series in the perturbation parameter $\epsilon$, with coefficients in the Banach space $\mathbb{F}$ of holomorphic and bounded functions on $(\mathcal{T}_1\cap D(0,h''))\times (\mathcal{T}_2\cap D(0,h''))\times H_{\beta'}$, with the norm of the supremum. Here $h''$, $\mathcal{T}_1,\mathcal{T}_2$ are determined in Theorem~\ref{teo1}.

The importance of this result compared to the main one in~\cite{lama} lies on the fact that a multisummability phenomenon can be observed here, in contrast to~\cite{lama}. This situation is attained due to the appearance of different Gevrey levels coming from the different variables in time.

\begin{theo}\label{teo2}
Under the assumptions of Theorem~\ref{teo1}, a formal power series
$$\hat{u}(\bt,z,\epsilon)=\sum_{m\ge0}H_m(\bt,z)\epsilon^m/m!\in\mathbb{F}[[\epsilon]]$$
exists, with the following properties. $\hat{u}$ is a formal solution of (\ref{ICP_main0}). In addition to that, $\hat{u}$ can be split in the form
$$\hat{u}(\bt,z,\epsilon)=a(\bt,z,\epsilon)+\hat{u}_{1}(\bt,z,\epsilon)+\hat{u}_{2}(\bt,z,\epsilon),$$
where $a(\bt,z,\epsilon)\in\mathbb{F}\{\epsilon\}$, and $\hat{u}_{1},\hat{u}_{2}\in\mathbb{F}[[\epsilon]]$. Moreover, for every $p_1\in\{0,\ldots,\varsigma_1-1\}$ and $p_2\in\{0,\ldots,\varsigma_2-1\}$, the function $u_{p_1,p_2}(\bt,z,\epsilon)$ can be written as
$$u_{p_1,p_2}(\bt,z,\epsilon)=a(\bt,z,\epsilon)+u^1_{p_1,p_2}(\bt,z,\epsilon)+u^2_{p_1,p_2}(\bt,z,\epsilon),$$
where $\epsilon\mapsto u^j_{p_1,p_2}(\bt,z,\epsilon)$ is an $\mathbb{F}-$valued function which admits $\hat{u}_{j}(\bt,z,\epsilon)$ as its $k_j-$Gevrey asymptotic expansion on $\mathcal{E}_{p_1,p_2}$, for $j=1,2$.

Moreover, assume that
$$\{((p_1^0,p_2^0),(p_1^1,p_2^1)), ((p_1^1,p_2^1),(p_1^2,p_2^2)), \ldots, ((p_1^{2y-1},p_2^{2y-1}),(p_1^{2y},p_2^{2y}))     \}$$
is a subset of $\mathcal{U}_{k_2}$, for some positive integer $y$, and
$$\mathcal{E}_{p_1^{y},p_2^y}\subseteq S_{\pi/k_1}\subseteq\bigcup_{0\le j\le 2y}\mathcal{E}_{p_1^{j},p_2^{j}},$$
for some sector $S_{\pi/k_1}$ with opening larger than $\pi/k_1$. Then, $\hat{u}(\bt,z,\epsilon)$ is $(k_2,k_1)-$summable on $\mathcal{E}_{p_1^y,p_2^y}$ and its $(k_2,k_1)-$sum is $u_{p_1^y,p_2^y}(\epsilon)$ on $\mathcal{E}_{p_1^y,p_2^y}$.
\end{theo}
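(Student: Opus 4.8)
The plan is to apply the multilevel Ramis--Sibuya Theorem (Theorem (multilevel-RS)) to the family of analytic solutions $\{u_{p_1,p_2}\}$ constructed in Theorem~\ref{teo1}, with the Banach space $\mathbb{F}$ being the space of bounded holomorphic functions on $(\mathcal{T}_1\cap D(0,h''))\times(\mathcal{T}_2\cap D(0,h''))\times H_{\beta'}$ equipped with the sup norm. First I would set $G_{p_1,p_2}(\epsilon):=u_{p_1,p_2}(\bt,z,\epsilon)$, viewed as an element of $\mathbb{F}$ depending on $\epsilon\in\mathcal{E}_{p_1,p_2}$. Hypothesis 1) of the theorem holds because Theorem~\ref{teo1} guarantees each $u_{p_1,p_2}$ is bounded holomorphic on its domain, hence bounded as $\epsilon\to0$. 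Hypothesis 2) is exactly the trichotomy established in the second half of Theorem~\ref{teo1}: each pair $\{(p_1,p_2),(p'_1,p'_2)\}$ either has empty sector intersection (class $\mathcal{U}_0$), or satisfies the exponentially small bound $K_pe^{-M_p/|\epsilon|^{k_1}}$ (class $\mathcal{U}_{k_1}$), or satisfies $K_pe^{-M_p/|\epsilon|^{k_2}}$ (class $\mathcal{U}_{k_2}$). Applying Theorem (multilevel-RS) then yields a convergent series $a(\bt,z,\epsilon)\in\mathbb{F}\{\epsilon\}$ and two formal series $\hat{u}_1,\hat{u}_2\in\mathbb{F}[[\epsilon]]$ together with the decomposition $u_{p_1,p_2}=a+u^1_{p_1,p_2}+u^2_{p_1,p_2}$, where $u^j_{p_1,p_2}\in\mathcal{O}(\mathcal{E}_{p_1,p_2},\mathbb{F})$ admits $\hat{u}_j$ as its Gevrey-$1/k_j$ asymptotic expansion. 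Setting $\hat{u}:=a+\hat{u}_1+\hat{u}_2=\sum_{m\ge0}H_m(\bt,z)\epsilon^m/m!$ gives the desired formal power series, and the second, $(k_2,k_1)$-summability conclusion follows verbatim from the corresponding statement in Theorem (multilevel-RS) applied to the chain of sectors in $\mathcal{U}_{k_2}$ covering $S_{\pi/k_1}$.

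Next I would verify that $\hat{u}$ is a formal solution of the original problem (\ref{ICP_main0}). The idea is that each $u_{p_1,p_2}$ solves (\ref{ICP_main}), which is equivalent to (\ref{ICP_main0}); since $u_{p_1,p_2}$ admits $\hat{u}$ as its asymptotic expansion on $\mathcal{E}_{p_1,p_2}$ (being the sum of $a$, whose Taylor series is part of $\hat{u}$, and $u^1_{p_1,p_2}+u^2_{p_1,p_2}$, whose joint asymptotic expansion is $\hat{u}_1+\hat{u}_2$), one can equate coefficients. Concretely, I would substitute $\hat{u}(\bt,z,\epsilon)=\sum_{m\ge0}H_m(\bt,z)\epsilon^m/m!$ into (\ref{ICP_main0}), use that the coefficients of $c_0$ and $f$ are themselves holomorphic in $\epsilon$ (so their Taylor expansions in $\epsilon$ make sense termwise), and observe that applying the differential operators $Q_j(\partial_z)$, $\partial_{t_j}$, $P_j(\partial_z,\epsilon)$, and multiplication by powers of $t_j$ commute with taking asymptotic expansions in $\epsilon$ on a sector. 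Because each $u_{p_1,p_2}$ satisfies the equation identically and has $\hat{u}$ as asymptotic expansion, the formal series obtained by expanding the left- and right-hand sides of (\ref{ICP_main0}) in powers of $\epsilon$ must agree order by order; a standard argument (differentiating the asymptotic remainder estimates and using that a function asymptotic to zero to all orders on a sector of opening $>\pi/k$ — here the sectors have opening $>\pi/k_2$ — with the exponential decay controlling both levels, has vanishing Taylor coefficients) shows the two sides have the same formal expansion, namely that $\hat{u}$ solves (\ref{ICP_main0}) formally. Alternatively, and more cleanly, I would invoke Proposition~\ref{prop10}: the recursion there determines a unique formal series solution $\hat{U}$ of (\ref{SCP}), and via the inverse Fourier transform and the change $(T_1,T_2)=(\epsilon t_1,\epsilon t_2)$ this produces a unique formal solution of (\ref{ICP_main0}); one then checks that this formal solution coincides with the $\hat{u}$ produced by Ramis--Sibuya by matching asymptotic expansions.

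The main obstacle I anticipate is precisely this last identification: ensuring that the formal series $\hat{u}$ extracted abstractly from the Ramis--Sibuya machinery is genuinely the formal solution of (\ref{ICP_main0}), rather than merely a formal series that happens to be asymptotic to the analytic solutions. The delicate point is that asymptotic expansion on a single sector does not by itself pin down enough — one needs the full family $\{u_{p_1,p_2}\}$ covering a full punctured neighborhood of $0$, together with the gluing data, to conclude that the common asymptotic expansion is well-defined and that the differential relation passes to the formal level. I would handle this by taking any one $\mathcal{E}_{p_1,p_2}$ whose opening exceeds $\pi/k_2$ (so that Gevrey-$1/k_j$ asymptotics in that sector uniquely determine the formal series coefficients, by Watson-type unicity), expressing $u_{p_1,p_2}$ via its Laplace-Fourier integral representation (\ref{int_repres_U_d_p}), and using the known action of Borel/Laplace transforms on the operators (\ref{sum_prod_deriv_m_k_sum}), (\ref{Borel_diff})--(\ref{Borel_product}) to track how the equation for $\omega_{\bk}^{\mathfrak{d}_{p_1},\tilde{\mathfrak{d}}_{p_2}}$ (i.e.\ (\ref{k_Borel_equation})) encodes exactly the recursion of Proposition~\ref{prop10}. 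This closes the loop: the $U_{n_1,n_2}(m,\epsilon)$ from that recursion, once Fourier-inverted and the change of variables undone, give the Taylor coefficients $H_m(\bt,z)$, and the formal identity (\ref{ICP_main0}) holds by construction of that recursion. The remaining bookkeeping — that the two time variables contribute the two distinct levels $k_1$ and $k_2$ via the monomial-summability structure, so that the split $\hat{u}=a+\hat{u}_1+\hat{u}_2$ is the one respecting these levels — is already baked into the classification $\mathcal{U}_{k_1}$ versus $\mathcal{U}_{k_2}$ from Theorem~\ref{teo1}, and requires no further work here.
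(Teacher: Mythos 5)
Your proposal is correct and follows essentially the same route as the paper: apply the multilevel Ramis--Sibuya theorem to $G_{p_1,p_2}(\epsilon)=u_{p_1,p_2}(\cdot,\cdot,\cdot,\epsilon)\in\mathbb{F}$, with hypothesis 2) supplied by the trichotomy $\mathcal{U}_0,\mathcal{U}_{k_1},\mathcal{U}_{k_2}$ of Theorem~\ref{teo1}, and read off the splitting and the $(k_2,k_1)$-summability statement. For the verification that $\hat{u}$ is a formal solution, the paper simply uses that the Gevrey asymptotic expansions give $\lim_{\epsilon\to0}\partial_{\epsilon}^{\ell}u_{p_1,p_2}=H_{\ell}$ in $\mathbb{F}$, applies $\partial_{\epsilon}^{m}$ to the equation via Leibniz and passes to the limit to obtain a recursion for the $H_{m}$ --- i.e.\ the first, elementary argument you sketch; the identification with the recursion of Proposition~\ref{prop10} that you flag as the main obstacle is not needed.
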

\begin{proof}
Let $(u_{p_1,p_2}(\bt,z,\epsilon))_{\begin{subarray}{l}0\le p_1\le \varsigma_1-1\\0\le p_2\le \varsigma_2-1\end{subarray}}$ be the family constructed in Theorem~\ref{teo1}. We recall that $(\mathcal{E}_{p_1,p_2})_{\begin{subarray}{l}0\le p_1\le \varsigma_1-1\\0\le p_2\le \varsigma_2-1\end{subarray}}$ is a good covering in $\C^{\star}$. 

The function $G_{p_1,p_2}(\epsilon):=(t_1,t_2,z)\mapsto u_{p_1,p_2}(t_1,t_2,z,\epsilon)$ belongs to $\mathcal{O}(\mathcal{E}_{p_1,p_2},\mathbb{F})$. We consider $\{(p_1,p_2),(p'_1,p'_2)\}$ such that $(p_1,p_2)$ and $(p'_1,p'_2)$ belong to $\{0,\ldots,\varsigma_1-1\}\times \{0,\ldots,\varsigma_2-1\}$, and $\mathcal{E}_{p_1,p_2}$ and $\mathcal{E}_{p'_1,p'_2}$ are consecutive sectors in the good covering, so their intersection is not empty. In view of (\ref{exp_small_difference_u_p11}) and (\ref{exp_small_difference_u_p12}), one has that $\Delta_{(p_1,p_2),(p'_1,p'_2)}(\epsilon):=G_{p_1,p_2}(\epsilon)- G_{p'_1,p'_2}(\epsilon)$ satisfies exponentially flat bounds of certain Gevrey order, which is $k_1$ in the case that $\{(p_1,p_2),(p'_1,p'_2)\}\in\mathcal{U}_{k_1}$ and $k_2$ if $\{(p_1,p_2),(p'_1,p'_2)\}\in\mathcal{U}_{k_2}$. Multilevel-RS Theorem guarantees the existence of formal power series $\hat{G}(\epsilon),\hat{G}_1(\epsilon),\hat{G}_2(\epsilon)\in\mathbb{F}[[\epsilon]]$ such that 
$$\hat{G}(\epsilon)=a(\epsilon)+\hat{G}_1(\epsilon)+\hat{G}_{2}(\epsilon),$$ and the splitting
$$G_{p_1,p_2}(\epsilon)=a(\epsilon)+G^1_{p_1,p_2}(\epsilon)+G^2_{p_1,p_2}(\epsilon),$$
for some $a\in\mathbb{F}\{\epsilon\}$, such that for every $(p_1,p_2)\in\{0,\ldots,\varsigma_1-1\}\times \{0,\ldots,\varsigma_2-1\}$, one has that $G^1_{p_1,p_2}(\epsilon)$ admits $\hat{G}_{p_1,p_2}^1(\epsilon)$ as its Gevrey asymptotic expansion of order $k_1$, and $G^2_{p_1,p_2}(\epsilon)$ admits $\hat{G}_{p_1,p_2}^2(\epsilon)$ as its Gevrey asymptotic expansion of order $k_2$. We define
$$\hat{G}(\epsilon)=:\hat{u}(\bt,z,\epsilon)=\sum_{m\ge0}H_m(\bt,z)\frac{\epsilon^{m}}{m!}.$$
It only rests to prove that $\hat{u}(\bt,z,\epsilon)$ is a formal solution of (\ref{ICP_main0}). For every $0\le p_1\le \varsigma_1-1$, $0\le p_2\le \varsigma_2-1$ and $j=1,2$, the existence of an asymptotic expansion concerning $G^{j}_{p_1,p_2}(\epsilon)$ and $\hat{G}^{j}(\epsilon)$ implies that
\begin{equation}\label{e1363}
\lim_{\epsilon\to 0,\epsilon\in\mathcal{E}_{p_1,p_2}}\sup_{(\bt,z)\in(\tau_1\cap D(0,h''))\times (\tau_2\cap D(0,h''))\times H_{\beta'}}|\partial_{\epsilon}^{\ell}u_{p_1,p_2}(\bt,z,\epsilon)-H_{\ell}(\bt)|=0,
\end{equation} 
for every $\ell\in\mathbb{N}$. By construction, the function $u_{p_1,p_2}(\bt,z,\epsilon)$ is a solution of (\ref{ICP_main0}). Taking derivatives of order $m\ge0$ with respect to $\epsilon$ on that equation yield
\begin{multline}\label{e1367}
Q_1(\partial_{z})Q_2(\partial_z)\partial_{t_1}\partial_{t_2}\partial_\epsilon^mu_{p_1,p_2}(\bt,z,\epsilon)\\
 =\sum_{m_1+m_2=m}\frac{m!}{m_1!m_2!}\left(\sum_{m_{11}+m_{12}=m_1}\frac{m_1!}{m_{11}!m_{12}!}\partial_\epsilon^{m_{11}}P_1(\partial_z,\epsilon)\partial_{\epsilon}^{m_{12}}u_{p_1,p_2}(\bt,z,\epsilon)\right)\\
\times \left(\sum_{m_{21}+m_{22}=m_2}\frac{m_2!}{m_{21}!m_{22}!}\partial_\epsilon^{m_{21}}P_2(\partial_z,\epsilon)\partial_{\epsilon}^{m_{22}}u_{p_1,p_2}(\bt,z,\epsilon)\right)\\
+\sum_{0\le l_1\le D_1,0\le l_2\le D_2}\left(\sum_{m_{1}+m_2=m}\frac{m!}{m_1!m_2!}\partial_\epsilon^{m_1}(\epsilon^{\Delta_{l_1,l_2}})t_1^{d_{l_1}}\partial_{t_1}^{\delta_{l_1}}t_2^{\tilde{d}_{l_2}}\partial_{t_2}^{\tilde{\delta}_{l_2}}R_{l_1,l_2}(\partial_z)\partial_\epsilon^{m_2}u_{p_1,p_2}(\bt,z,\epsilon)\right)\\
+ \sum_{m_{1}+m_{2}=m} \frac{m!}{m_{1}!m_{2}!} \partial_{\epsilon}^{m_1}c_{0}(\bt,z,\epsilon)
R_{0}(\partial_{z})\partial_{\epsilon}^{m_2}u_{p_1,p_2}(\bt,z,\epsilon) + \partial_{\epsilon}^{m}f(\bt,z,\epsilon),
\end{multline}
for every $m\ge 0$ and $(\bt,z,\epsilon)\in (\mathcal{T}_1\cap D(0,h''))\times (\mathcal{T}_2\cap D(0,h''))\times H_{\beta'}\times\mathcal{E}_{p_1,p_2}$. Tending $\epsilon\to 0$ in (\ref{e1367}) together with (\ref{e1363}), we obtain a recursion formula for the coefficients of the formal solution.
\begin{multline}\label{e1368}
Q_1(\partial_{z})Q_2(\partial_z)\partial_{t_1}\partial_{t_2}H_{m}(\bt,z)\\
 =\sum_{m_1+m_2=m}\frac{m!}{m_1!m_2!}\left(\sum_{m_{11}+m_{12}=m_1}\frac{m_1!}{m_{11}!m_{12}!}\partial_\epsilon^{m_{11}}P_1(\partial_z,0)H_{m_{12}}(\bt,z)\right)\\
\times \left(\sum_{m_{21}+m_{22}=m_2}\frac{m_2!}{m_{21}!m_{22}!}\partial_\epsilon^{m_{21}}P_2(\partial_z,0)H_{m_{12}}(\bt,z)\right)\\
+\sum_{0\le l_1\le D_1,0\le l_2\le D_2}\frac{m!}{(m-\Delta_{l_1,l_2})!}t_1^{d_{l_1}}\partial_{t_1}^{\delta_{l_1}}t_2^{\tilde{d}_{l_2}}\partial_{t_2}^{\tilde{\delta}_{l_2}}R_{l_1,l_2}(\partial_z)H_{m-\Delta_{l_1,l_2}}(\bt,z)\\
+ \sum_{m_{1}+m_{2}=m} \frac{m!}{m_{1}!m_{2}!} \partial_{\epsilon}^{m_1}c_{0}(\bt,z,0)
R_{0}(\partial_{z})H_{m_2}(\bt,z) + \partial_{\epsilon}^{m}f(\bt,z,0),
\end{multline}
for every $m\ge \max_{1\le l_1\le D_1,1\le l_2\le D_2}\Delta_{l_1,l_2}$, and $(\bt,z,\epsilon)\in (\mathcal{T}_1\cap D(0,h''))\times (\mathcal{T}_2\cap D(0,h''))\times H_{\beta'}$. From the analyticity of $c_0$ and $f$ with respect to $\epsilon$ in a vicinity of the origin we get
\begin{equation}
c_{0}(\bt,z,\epsilon) = \sum_{m \geq 0} \frac{(\partial_{\epsilon}^{m}c_{0})(\bt,z,0)}{m!}\epsilon^{m} \ \ , \ \
f(\bt,z,\epsilon) = \sum_{m \geq 0} \frac{(\partial_{\epsilon}^{m}f)(\bt,z,0)}{m!}\epsilon^{m}, \label{Taylor_c0_f}
\end{equation}
for every $\epsilon\in D(0,\epsilon_0)$ and $(\bt,z)$ as above. On the other hand, a direct inspection from the recursion formula (\ref{e1368}) and (\ref{Taylor_c0_f}) allow us to affirm that the formal power series $\hat{u}(\bt,z,\epsilon) = \sum_{m \geq 0} H_{m}(\bt,z)\epsilon^{m}/m!$ solves the equation (\ref{ICP_main0}).
\end{proof}


\begin{thebibliography}{99}
\bibitem{ba} W. Balser, \emph{From divergent power series to analytic functions. Theory and application of multisummable power series.} Lecture Notes in Mathematics, 1582. Springer-Verlag, Berlin, 1994. x+108 pp.
\bibitem{ba2} W. Balser, \emph{Formal power series and linear systems of meromorphic ordinary differential equations.} Universitext. Springer-Verlag, New York, 2000. xviii+299 pp.
\bibitem{camosch} M. Canalis-Durand, J. Mozo-Fern\'andez, R. Sch\"afke, \emph{Monomial summability and doubly singular differential equations}, J. Differential Equations 233 (2007), no. 2, 485--511.
\bibitem{camo1} S. A. Carrillo, J. Mozo-Fern\'andez, \emph{An extension of Borel-Laplace methods and monomial summability.} J. Math. Anal. Appl. 457 (2018), no. 1, 461--477.
\bibitem{camo2} S. A. Carrillo, J. Mozo-Fern\'andez, \emph{Tauberian properties for monomial summability with applications to Pfaffian systems.} J. Differential Equations 261 (2016), no. 12, 7237--7255.
\bibitem{hssi} P. Hsieh, Y. Sibuya, \emph{Basic theory of ordinary differential equations}. Universitext. Springer-Verlag, New York, 1999.
\bibitem{lama} A. Lastra, S. Malek, \emph{On parametric Gevrey asymptotics for some nonlinear initial value Cauchy problems.} J. Differential Equations 259 (2015), no. 10, 5220--5270.
\bibitem{lama1} A. Lastra, S. Malek, \emph{On parametric multisummable formal solutions to some nonlinear initial value Cauchy problems.} Adv. Difference Equ. 2015, 2015:200, 78 pp.
\bibitem{lama2} A. Lastra, S. Malek, \emph{Multi-level Gevrey solutions of singularly perturbed linear partial differential equations.} Adv. Differential Equations 21 (2016), no. 7-8, 767--800.
\bibitem{lamasa1} A. Lastra, S. Malek, J. Sanz, \emph{On Gevrey solutions of threefold singular nonlinear partial differential equations.} J. Differential Equations 255 (2013), no. 10, 3205--3232.
\bibitem{loday} M. Loday-Richaud, Divergent series, summability and resurgence. II. Simple and multiple summability. Lecture Notes in Mathematics, 2154. Springer, 2016. 
\bibitem{ma1} S. Malek, \emph{On the summability of formal solutions for doubly singular nonlinear partial differential equations.} J. Dyn. Control Syst. 18 (2012), no. 1, 45--82.
\bibitem{mosch18} J. Mozo-Fern\'andez, R. Sch\"afke, \emph{Asymptotic expansions and summability with respect to an analytic germ.} 2017. Available at https://arxiv.org/pdf/1610.01916v2.pdf
\bibitem{taya} H. Tahara, H. Yamazawa, \emph{Multisummability of formal solutions to the Cauchy problem for some linear partial differential equations}, Journal of Differential equations, Volume 255, Issue 10, 15 November 2013, pages 3592--3637.
\bibitem{takei} Y. Takei, \emph{On the multisummability of WKB solutions of certain singularly perturbed linear ordinary differential equations}, Opuscula Math. 35 no. 5 (2015), 775--802. 
\bibitem{yayo} H. Yamazawa, M. Yoshino, \emph{Parametric Borel summability for some semilinear system of partial differential equations.} Opuscula Math. 35 (2015), no. 5, 825--845.
\bibitem{yo} M. Yoshino, \emph{Parametric Borel summability of partial differential equations of irregular singular type}. Analytic, algebraic and geometric aspects of differential equations, 455–471, Trends Math., Birkh\"auser/Springer, Cham, 2017.
\end{thebibliography}
\end{document}